\newcommand{\R}{\mathbb{R}}
\newcommand{\RP}{\mathbb{R}\textrm{P}}
\newcommand{\CP}{\mathbb{C}\textrm{P}}
\newcommand{\C}{\mathbb{C}}
\newcommand{\G}{\mathbb{G}}
\newcommand{\dd}{\mathrm{d}}
\newtheorem{thm}{Theorem}
\newtheorem{lemma}[thm]{Lemma}
\newtheorem{cor}[thm]{Corollary}
\newtheorem{prop}[thm]{Proposition}
\newtheorem{df}{Definition}
\theoremstyle{remark} 
\newtheorem{rk}[]{Remark}
\newcommand{\beq}{\begin{equation}}
\newcommand{\eeq}{\end{equation}}
\newcommand{\ubeq}{\begin{equation*}}
\newcommand{\ueeq}{\end{equation*}}
\DeclareMathOperator{\Diff}{D}
\DeclareMathOperator{\diff}{d}
\newcommand{\Esp}{\mathbb{E}}
\newcommand{\Prj}{\mathrm{P}}
\newcommand{\Real}{\mathbb{R}}
\newcommand{\grpsym}{\mathfrak{S}}
\newcommand{\Grass}{\mathbb{G}}
\title{Probabilistic Schubert Calculus: asymptotics}
\author{Antonio Lerario and Leo Mathis}
\begin{document}

\begin{abstract}
In the recent paper \cite{PSC} Bürgisser and Lerario introduced a geometric framework for a probabilistic study of real Schubert Problems. They denoted by $\delta_{k,n}$ the average number of projective $k$-planes in $\Real\Prj^n$ that intersect $(k+1)(n-k)$ many random, independent and uniformly distributed linear projective subspaces of dimension $n-k-1$. They called $\delta_{k,n}$ the \emph{expected degree} of the real Grassmannian $\Grass(k,n)$ and, in the case $k=1$, they proved that:
$$ \delta_{1,n}= \frac{8}{3\pi^{5/2}} \cdot \left(\frac{\pi^2}{4}\right)^n \cdot n^{-1/2} \left( 1+\mathcal{O}\left(n^{-1}\right)\right)
.$$
Here we generalize this result and prove that for every fixed integer $k>0$ and as $n\to \infty$, we have
\begin{equation*}
\delta_{k,n}=a_k \cdot \left(b_k\right)^n\cdot n^{-\frac{k(k+1)}{4}}\left(1+\mathcal{O}(n^{-1})\right)
\end{equation*}
where $a_k$ and $b_k$ are some (explicit) constants, and $a_k$ involves an interesting integral over the space of polynomials that have all real roots.  For instance:
$$\delta_{2,n}= \frac{9\sqrt{3}}{2048\sqrt{2\pi}} \cdot 8^n \cdot n^{-3/2} \left( 1+\mathcal{O}\left(n^{-1}\right)\right).$$
Moreover we prove that these numbers belong to the ring of \emph{periods} intoduced by Kontsevich and Zagier and we give an explicit formula for $\delta_{1,n}$ involving a one dimensional integral of certain combination of Elliptic functions.
\end{abstract}

\maketitle

\section{Introduction}
\subsection{Random Real Enumerative Geometry}

In this paper we continue the study of real enumerative problems initiated in~\cite{PSC}. Our goal is to answer questions such as

\begin{center} \emph{In average, how many lines intersect four \emph{random} lines in $\RP^3$?} \end{center}

To be more precise, let $\Grass(1,3)$ be the Grassmannian of lines in $\Real\Prj^3$. This is a homogeneous space with a transitive action of the orthogonal group $O(4)$ on it and there is a unique invariant probability measure defined on $\G(1,3)$ invariant under this action. We fix $L\in\Grass(1,3)$ and define the \emph{Schubert variety}:
\begin{equation} 
\Omega(L):=\left\{\ell\in\Grass(1,3) \ | \ \ell\cap L \neq \emptyset \right\}.
\end{equation}

Then the solution to the problem is the number:
\begin{equation}
\delta_{1,3}:=\Esp{ \# \left(g_1\cdot\Omega\left(L\right) \cap \ldots \cap g_4\cdot\Omega\left(L\right)\right)}
\end{equation}
where $g_1,\ldots,g_4$ are independent, taken uniformly at random from $O(4)$ (with the normalized Haar measure).\\

One can generalize to higher dimensions. Let $\Grass(k,n)$ be the Grassmannian of linear projective subspaces of dimension $k$ in $\Real\Prj^n$. It is a homogeneous space with $O(n+1)$ acting transitively on it and with a unique $O(n+1)-$invariant probability measure. We fix $L\in\Grass(n-k-1,n)$ and introduce the corresponding \emph{Schubert variety}:
\begin{equation} \label{defSchub}
\Omega(L):=\left\{\ell\in\Grass(k,n) \ | \ \ell\cap L \neq \emptyset \right\}.
\end{equation}
We define
\begin{equation}\label{eq:ed}
\delta_{k,n}:=\Esp{ \# \left(g_1\cdot\Omega\left(L\right) \cap \ldots \cap g_{(k+1)(n-k)}\cdot\Omega\left(L\right)\right)}
\end{equation}
where $g_1,\ldots,g_{(k+1)(n-k)}$ are independent, taken uniformly at random from $O(n+1)$ (with the normalized Haar measure). This number equals the average number of $k$-dimensional subspaces of $\RP^n$ meeting $(k+1)(n-k)$ random subspaces of dimension $(n-k-1)$.

\subsection{Previously on \emph{Probabilistic Schubert calculus}}\label{subsec:Previously}

In the recent work \cite{PSC}, the first named author of the present paper together with Peter B\"urgisser established a formula\footnote{Notice that, in the language of \cite{PSC}, $\delta_{k,n}=\mathrm{edeg}(G(k+1, n+1)).$} for the number $\delta_{k,n}$ in \eqref{eq:ed}, see \cite[Corollary 5.2]{PSC}:
\begin{equation}
\delta_{k,n}=\frac{d_{k,n}!}{2^{d_{k,n}}} \cdot |\Grass(k,n)| \cdot |C(k,n)|
\end{equation}
where $|\Grass(k,n)|$ is the volume of the Grassmanniann and $|C(k,n)|$ the volume of a certain convex body in $\Real^{(k+1)\times (n-k)}$, which the authors called the \emph{Segre zonoid}. This convex body is defined as follows: take random points $p_1,\ldots,p_m$ independently and uniformly on $S^k\times S^{n-k-1} \subset \Real^{(k+1)(n-k)}$ and consider the Minkowski sum $K_m:=\frac{1}{m}([0,p_1]+\cdots+[0,p_m])$. Then $C(k,n)$ is the limit, with respect to the Hausdorff metric, of $K_m$ as $m\to\infty$ (being a limit of zonotopes, $C(k,n)$ is a zonoid).

If we see elements of $ \Real^{(k+1)\times(n-k)}$ as matrices it turns out that this convex body, in some sense only depends on their \emph{singular values}. Using this and assuming $k+1<n-k$ one can construct a convex body in the space $\R^{k+1}$ of singular values such that if we call $r$ its radial function, we have \cite[Theorem 5.13]{PSC}:
\begin{equation}\label{intro:edegint}
\delta_{k,n}=\beta_{k,n}\int_{S^{k}_+}{\left(p_k \cdot (r)^{k+1}\right)^{(n-k)}q_k \ dS^{k}}
\end{equation}
where $p_k$ and $q_k$ are simple combinatorial functions of the coordinates on $\Real^{k+1}$, $\beta_{k,n}$ is a known coefficient (whose explicit expression is given in \eqref{eq2}) and the domain of integration is 
\beq S^{k}_+=\left\{x\in\Real^{k+1}|\ \|x\|=1,\, x_1\geq\ldots\geq x_{k+1} \geq 0\right\}.\eeq
Equation~\eqref{intro:edegint} will be our starting point for computing both the asymptotic of $\delta_{k,n}$ and the ``exact" formula for $\delta_{1,3}$.

\subsection{Main Results}
Our first main result is the asymptotic of $\delta_{k,n}$ for any fixed $k$, as $n$ goes to infinity, generalizing \cite[Theorem 6.8]{PSC}, which deals with the case $k=1$.
\begin{thm}\label{thm:asintro}
For every integer $k>0$ and as $n$ goes to infinity, we have
\begin{equation*}
\delta_{k,n}=a_k \cdot \left(b_k\right)^n\cdot n^{-\frac{k(k+1)}{4}}\left(1+\mathcal{O}(n^{-1})\right)
\end{equation*}
where
\begin{align*}
a_k &=\Lambda_k \  \frac{2^{\frac{(k+1)(k-2)}{4}}}{\pi^{\frac{k(k+2)}{2}}}\ \frac{\Gamma\left(\frac{k(k+3)}{4} \right)}{\Gamma\left(\frac{k(k+1)+2}{4} \right)}\left( \frac{k+1}{k+2}\right)^{\frac{k(k+3)}{4}}\left(\frac{\Gamma\left(\frac{k+1}{2} \right)}{\Gamma\left(\frac{k+2}{2} \right)}\right)^{k(k+1)}         \label{defak intro} \\
b_k&=\left( \frac{\Gamma\left(\frac{k+2}{2} \right)}{\Gamma\left(\frac{k+1}{2} \right)}\sqrt{\pi}\right)^{(k+1)}.
\end{align*}
(The number $\Lambda_k$ that appears in the expression of $a_k$ can be expressed as an integral over the polynomials that have all roots in $\Real$, see Definition \ref{def Lambdak}.)
\end{thm}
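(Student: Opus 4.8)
The plan is to start from the integral formula \eqref{intro:edegint}, namely
\[
\delta_{k,n}=\beta_{k,n}\int_{S^{k}_+}\bigl(p_k\cdot r(x)^{k+1}\bigr)^{n-k}\,q_k\,dS^{k},
\]
and to carry out a Laplace-type asymptotic analysis as $n\to\infty$. First I would rewrite the integrand as $e^{(n-k)\log(p_k r^{k+1})}q_k$ and locate the maximum of the ``phase'' $\varphi(x):=\log\bigl(p_k(x)\,r(x)^{k+1}\bigr)$ on the spherical simplex $S^k_+$. The expectation is that the maximum is attained at the barycentric point $x^*=\tfrac{1}{\sqrt{k+1}}(1,\dots,1)$, i.e.\ the point where all the singular values coincide; this is the point of maximal symmetry and one should check it is an interior critical point (after accounting for the constraint $\|x\|=1$) and that it is nondegenerate. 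The value $\varphi(x^*)$ will produce the exponential base: tracking constants, $e^{\varphi(x^*)}$ together with the growth of $\beta_{k,n}$ must be organized to give exactly $b_k^{\,n}$, while the leading Gaussian (Hessian) contribution, a ratio of Gamma functions coming from $\beta_{k,n}$, and the value of $q_k$ at $x^*$ combine into $a_k$. The power $n^{-k(k+1)/4}$ should emerge as the product of a factor $n^{-k/2}$ from the $k$-dimensional stationary-phase Gaussian integral over $S^k_+$ and polynomial factors of $n$ hidden in $\beta_{k,n}$; since $k + k(k-1)/2 \cdot(\text{something})$ must total $k(k+1)/4$, the precise bookkeeping of $\beta_{k,n}$'s $n$-dependence (from its Gamma-function and power constituents, see \eqref{eq2}) is essential and I would do it carefully term by term.

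A subtlety is that the radial function $r$ is only the radial function of a convex body and is a priori not smooth; near $x^*$, however, by the symmetry of the Segre zonoid construction one expects $r$ to be real-analytic (the singular-value body is invariant under the relevant symmetries and $x^*$ is a smooth boundary point), so a local Taylor expansion to second order is legitimate. I would therefore separate the analysis into: (i) a neighbourhood $U$ of $x^*$ where $r$ is smooth and the standard Laplace method applies, yielding the full asymptotic expansion with the $(1+\mathcal{O}(n^{-1}))$ error; and (ii) the complement $S^k_+\setminus U$, where I must show the integral is exponentially smaller, i.e.\ $\varphi(x)<\varphi(x^*)-c$ for some $c>0$ uniformly, which follows once one knows $x^*$ is the \emph{unique} global maximum of $\varphi$ on the compact set $S^k_+$. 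Establishing this uniqueness and strictness is the crux of the ``hard'' part, discussed below.

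The quantity $\Lambda_k$ is precisely where the non-elementary content lands: after the Laplace analysis, the leading coefficient contains the value at $x^*$ of $r^{k+1}$ raised to a power and of the Hessian determinant of $\varphi$, and $r(x^*)$ itself is expressed through an integral over $S^k\times S^{n-k-1}$ that, in the limit, becomes an integral over the space of real polynomials with all real roots (the Segre zonoid's support/radial function at the symmetric point can be written via the expected operator norm or a spectral average, which in the $k$-dependent but $n$-free limit reduces to such a polynomial integral). I would make Definition~\ref{def Lambdak} so that $\Lambda_k$ absorbs exactly this integral together with any residual constant from the Hessian, and then verify the stated closed forms for $a_k$ and $b_k$ by matching: plug $k=1$ and check against the known $\delta_{1,n}$ asymptotic from \cite[Theorem~6.8]{PSC}, and plug $k=2$ to confirm the quoted $\delta_{2,n}=\tfrac{9\sqrt3}{2048\sqrt{2\pi}}8^n n^{-3/2}(1+\mathcal{O}(n^{-1}))$, which also pins down $\Lambda_1$ and $\Lambda_2$ numerically as a consistency check.

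The main obstacle I anticipate is twofold. First, proving that the phase $\varphi(x)=\log\bigl(p_k(x)r(x)^{k+1}\bigr)$ has a \emph{unique, nondegenerate} maximum at the fully symmetric point: $p_k$ and $q_k$ are explicit, but $r$ is the radial function of the singular-value zonoid and controlling it globally (not just its symmetry) requires genuine convex-geometric input — one likely wants a log-concavity or a rearrangement/Schur-convexity argument showing $p_k r^{k+1}$ is Schur-concave on $S^k_+$ so that spreading the singular values apart strictly decreases it. Second, justifying differentiability of $r$ at $x^*$ and computing its Hessian there explicitly enough to extract a clean constant; this is where the integral representation of $r$ (hence of $\Lambda_k$) is unavoidable, and getting it into a form that is manifestly a Kontsevich–Zagier period (an integral of an algebraic function over a semialgebraic set, here the real-rooted polynomials) is the delicate bookkeeping that ties this theorem to the period statement advertised in the abstract.
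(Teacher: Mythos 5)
Your overall skeleton (Laplace's method at the symmetric point, Gamma-function asymptotics for $\beta_{k,n}$) is the paper's approach, but your local analysis contains a genuine gap that would produce the wrong power of $n$. The maximum point $\mu=\tfrac{1}{\sqrt{k+1}}(1,\dots,1)$ is not an interior point of $S^k_+$: it is a \emph{vertex} of the spherical simplex (all the constraints $x_1\geq\cdots\geq x_{k+1}$ are active there), and, more importantly, the amplitude $q_k$ in \eqref{Ikn} \emph{vanishes at $\mu$ to order} $K=\binom{k+1}{2}$ because of the factor $\prod_{i<j}|x_i^2-x_j^2|$ in \eqref{def pk qk}. Your plan of a standard nondegenerate stationary-phase expansion with a smooth nonvanishing amplitude would give a contribution $n^{-k/2}$ from the integral; since $\beta_{k,n}$ contributes a \emph{positive} power $n^{+k/2}$ (as its Gamma-quotient asymptotics show), no bookkeeping of \eqref{eq2} can then produce $n^{-k(k+1)/4}$. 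The missing decay comes precisely from the vanishing of $q_k$: the paper passes to Riemannian polar coordinates at $\mu$, where the amplitude behaves like $\rho^{K+k-1}$, and applies the one-dimensional Laplace theorem (Theorem~\ref{thm:Laplace}) with $\nu=K+k$, giving $I_k(n)\sim n^{-(K+k)/2}$ and hence the stated exponent. Relatedly, your account of where $\Lambda_k$ comes from is incorrect: $r(\mu)=R$ has an explicit closed form (Proposition~\ref{propR}), and $\Lambda_k$ does not arise from $r$, its Hessian, or any $n\to\infty$ limit of the Segre construction; it is the angular integral of the leading (Vandermonde) term of $q_k$ over the link of the corner $\mu$, which Proposition~\ref{old to new Lambda} rewrites, via the change of variables to elementary symmetric functions (whose Jacobian is again the Vandermonde), as the integral over real-rooted polynomials of Definition~\ref{def Lambdak}.

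The second gap is that you give no actual method for expanding the phase to second order: smoothness of $r$ near $\mu$ is asserted ``by symmetry'' and the Hessian is deferred to ``delicate bookkeeping''. The paper's route is concrete and is the real content of the proof: by Proposition~\ref{gradh} one writes $r^2=\|\nabla h\circ\psi^{-1}\|^2$ with $\psi=\pi\circ\nabla h$, proves that $\psi$ is a local diffeomorphism near $\mu$ (Proposition~\ref{propCpsi}), and uses the $\grpsym_{k+1}$-invariance together with Schur's lemma (Proposition~\ref{Schur}) to reduce all the relevant differentials on the irreducible space $\mu^\perp$ to scalars, so that a single explicit expansion of the integral representation \eqref{hspherical} along one well-chosen geodesic yields $\Diff^2_\mu(r^2)(x,x)=-4\tfrac{R^2}{k+1}\|x\|^2$ (Proposition~\ref{prop:Hessr}). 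Your proposed Schur-concavity argument for global uniqueness of the maximum is unnecessary (global maximality of $r$ and $p_k$ at $\mu$ on $S^k_+$ is already available from \cite{PSC}, cf.\ Proposition~\ref{propR}), but without the vanishing-amplitude analysis and a concrete Hessian computation your outline cannot reach the stated values of $a_k$, $b_k$, or even the exponent $-\tfrac{k(k+1)}{4}$.
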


For instance $\Lambda_1$ and $\Lambda_2$ can be easily computed and the previous formula gives:
\begin{align}
\delta_{1,n}&= \frac{8}{3\pi^{5/2}} \cdot \left(\frac{\pi^2}{4}\right)^n \cdot n^{-1/2} \left( 1+\mathcal{O}\left(n^{-1}\right)\right)\\
\delta_{2,n}&= \frac{9\sqrt{3}}{2048\sqrt{2\pi}} \cdot 8^n \cdot n^{-3/2} \left( 1+\mathcal{O}\left(n^{-1}\right)\right).
\end{align}

Similarly one can consider the same problem over the \emph{complex} Grassmannian $\G_\C(k,n)$ of $k$-dimensional complex subspaces of $\CP^n$. The Schubert cycles $\Omega_\C(L_\C)$ are defined just as in~\eqref{defSchub}. The compact Lie group with transitive action is now the unitary group $U(n+1)$. We define
\beq \label{dkncIntro}
\delta_{k,n}^\C:=\Esp{ \# \left(g_1\cdot\Omega_\C(L_\C) \cap \ldots \cap g_{(k+1)(n-k)}\cdot\Omega_\C(L_\C)\right)}
\end{equation}
where $g_1,\ldots,g_{(k+1)(n-k)}$ are independent, taken uniformly at random from $U(n+1)$ (with the normalized Haar measure).
\begin{rk}
The expected value in~\eqref{dkncIntro} is an integer. Indeed the variable is almost surely constant and computes the \emph{degree} of the Grassmannian in the Pl\"ucker embedding (see~\cite[Corollary 4.15]{PSC}).
\end{rk}
For this number we derive the following asymptotic (to be compared with Theorem~\ref{thm:asintro}).
\begin{prop}[The asymptotic for the complex case] 

\beq
\delta_{k,n}^\C = a_k^\C \cdot \left(b_k^\C\right)^n\cdot n^{-\frac{k(k+2)}{2}}\left(1+\mathcal{O}(n^{-1})\right)
\label{dknCasymp}
\eeq
where
\begin{align}
a_k^\C 	&=\frac{\Gamma(1)\Gamma(2)\cdots \Gamma(k+1)}{(2\pi)^{k/2}(k+1)^{k(k+1)-1/2}}         \label{defakC} \\
b_k^\C	&=\left(k+1\right)^{(k+1)}.
\end{align}

\end{prop}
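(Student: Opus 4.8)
The plan is to bypass the zonoid machinery entirely, exploiting the fact that in the complex case the random variable is almost surely constant: by the Remark preceding the Proposition, $\delta_{k,n}^\C$ equals the degree of $\G_\C(k,n)$ in its Pl\"ucker embedding (\cite[Corollary 4.15]{PSC}). This degree is classical. Identifying $\G_\C(k,n)$ with the Grassmannian of $(k+1)$-dimensional linear subspaces of $\C^{n+1}$, its Pl\"ucker degree equals the number of standard Young tableaux of rectangular shape $(k+1)\times(n-k)$, so by the hook length formula
\begin{equation*}
\delta_{k,n}^\C=\big((k+1)(n-k)\big)!\cdot\prod_{i=0}^{k}\frac{i!}{(n-k+i)!}.
\end{equation*}
(For $k=1$ the right-hand side is the Catalan number $\tfrac1n\binom{2n-2}{n-1}$, which will serve as a consistency check.) Thus the whole Proposition reduces to an asymptotic analysis of this explicit expression, and I would either recall the hook length derivation or quote a standard reference for it.

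Next I would pass to logarithms: setting $S:=(k+1)(n-k)$ we have $\ln\delta_{k,n}^\C=\ln(S!)+\sum_{i=0}^{k}\ln(i!)-\sum_{i=0}^{k}\ln\big((n-k+i)!\big)$, and I apply $\ln\Gamma(z+1)=z\ln z-z+\tfrac12\ln z+\tfrac12\ln(2\pi)+\mathcal{O}(z^{-1})$ to the $k+2$ factorial factors. Since the target error is the relative $\mathcal{O}(n^{-1})$, it suffices to determine $\ln\delta_{k,n}^\C$ up to an additive $\mathcal{O}(n^{-1})$; for that one must retain the subleading terms $\ln S=\ln n+\ln(k+1)-\tfrac{k}{n}+\mathcal{O}(n^{-2})$ and $\ln(n-k+i)=\ln n+\tfrac{i-k}{n}+\mathcal{O}(n^{-2})$. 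The computation organizes itself if one tabulates, for each of the four monomials $n\ln n$, $n$, $\ln n$, $1$, the separate contributions of $\ln(S!)$, of $\sum_i\ln(i!)$, and of $\sum_i\ln((n-k+i)!)$. The $n\ln n$ contributions cancel between the numerator factorial and the $k+1$ denominator factorials — this must occur, as $\delta_{k,n}^\C$ grows only exponentially — and so do the $\pm n$ contributions; what survives at order $n$ is $(k+1)\,n\ln(k+1)$, giving $b_k^\C=(k+1)^{k+1}$. Using $\sum_{i=0}^{k}(i-k)=-\tfrac{k(k+1)}{2}$, the net coefficient of $\ln n$ is $\big(\tfrac12-k(k+1)\big)+\big(\tfrac{k(k+1)}{2}-\tfrac{k+1}{2}\big)=-\tfrac{k(k+2)}{2}$, the claimed power of $n$; and the $\mathcal{O}(1)$ terms assemble, via $\prod_{i=0}^{k}i!=\Gamma(1)\Gamma(2)\cdots\Gamma(k+1)$ and the count of one $\tfrac12\ln(2\pi)$ from the numerator against $k+1$ from the denominator, into $\ln a_k^\C$ with $a_k^\C=\Gamma(1)\Gamma(2)\cdots\Gamma(k+1)\big((2\pi)^{k/2}(k+1)^{k(k+1)-1/2}\big)^{-1}$. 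Exponentiating and absorbing the $\mathcal{O}(n^{-1})$ into the error factor yields \eqref{dknCasymp}.

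There is no genuine obstacle: the statement rests on an exact formula, and all the work is in the expansion. The one delicate point, where a sign or a tally is easy to mishandle, is the assembly of the $\ln n$ and constant terms — the $k+1$ denominator factorials each match the numerator's leading behaviour, so the discrepancy that finally produces $n^{-k(k+2)/2}$ and $a_k^\C$ lives in the subleading terms of $\ln S$ and $\ln(n-k+i)$ and in the mismatch of the Stirling constants $\tfrac12\ln(2\pi)$. Organizing the proof as the four-row table above keeps this transparent, and the $k=1$ specialization, which must reproduce the Catalan asymptotics $C_{n-1}\sim 4^{n-1}\big(\sqrt{\pi}\,n^{3/2}\big)^{-1}$, i.e. $a_1^\C=\tfrac1{4\sqrt\pi}$ and $b_1^\C=4$, provides a final check.
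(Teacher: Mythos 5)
Your proposal is correct, and it starts from the same exact formula as the paper: your hook-length/SYT expression $\big((k+1)(n-k)\big)!\prod_{i=0}^{k}\frac{i!}{(n-k+i)!}$ is literally the closed formula \eqref{dknCexact} quoted from \cite[Corollary 4.15]{PSC} (note $(k+1)(n-k)\,\Gamma\big((k+1)(n-k)\big)=\big((k+1)(n-k)\big)!$ and $\Gamma(1)\cdots\Gamma(k+1)=\prod_{i=0}^k i!$), so the content of the Proposition is indeed just the asymptotic expansion of this product. Where you genuinely diverge from the paper is in how that expansion is carried out: the paper applies the Gauss multiplication theorem to rewrite $\Gamma\big((k+1)(n-k)\big)$ as $\frac{(k+1)^{(k+1)(n-k)-1/2}}{(2\pi)^{k/2}}\prod_{i=0}^{k}\Gamma\big(n-k+\tfrac{i}{k+1}\big)$, so that $b_k^\C$ and the constant $(2\pi)^{-k/2}(k+1)^{-k(k+1)+1/2}$ pop out \emph{exactly}, and the only asymptotic input is the ratio estimate $\Gamma(x+a)/\Gamma(x+b)=x^{a-b}(1+\mathcal{O}(x^{-1}))$ applied to $\prod_i\Gamma\big(n-k+\tfrac{i}{k+1}\big)/\Gamma(n-k+i+1)=n^{-k(k+2)/2-1}(1+\mathcal{O}(n^{-1}))$; you instead take logarithms and run Stirling on all $k+2$ factorials, tracking the $n\ln n$, $n$, $\ln n$ and constant rows. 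Your bookkeeping is right: the $\ln n$ coefficient $\big(\tfrac12-k(k+1)\big)+\big(\tfrac{k(k+1)}{2}-\tfrac{k+1}{2}\big)=-\tfrac{k(k+2)}{2}$ checks out, the surviving order-$n$ term gives $b_k^\C=(k+1)^{k+1}$, and the constants assemble to $a_k^\C$ (your $k=1$ Catalan check $a_1^\C=\tfrac{1}{4\sqrt{\pi}}$, $b_1^\C=4$ is consistent). The trade-off is that the paper's route is shorter and less error-prone precisely at the point you flag as delicate — the multiplication theorem removes all Stirling-constant tallying — while your route is self-contained (only Stirling is needed, no multiplication theorem) and comes with a built-in numerical sanity check; just make sure, if you write it up, to state explicitly that the hook-length expression coincides with \eqref{dknCexact} (or cite \cite[Corollary 4.15]{PSC} directly) rather than re-deriving the degree of the Grassmannian from scratch.
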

\begin{rk}To derive the asymptotic formula of Theorem \ref{thm:asintro}, we first notice that $\mu=\frac{1}{\sqrt{k+1}}(1,\ldots,1)$ is the only critical point of $r$ (and $p_k$) in the domain of integration of~\eqref{intro:edegint}, it is a maximum and is non degenerate. Thus if we can compute its Hessian $H_k$ at this point we could compute the asymptotic of~\eqref{intro:edegint} using Laplace's method.

The difficulty lies in the fact that $H_k$ is a symmetric bilinear form on $T_\mu S^{k} \cong\Real^{k}$, thus we would need to compute $\sim k^2$ entries for large $k$. However here we are saved by the symmetries of the convex body whose radial function is $r$. Indeed it is invariant by permutation of coordinates in $\Real^{k+1}$. This implies that $H_k$ commutes with this action of the symmetric group $\grpsym_{k+1}$. Moreover $T_\mu S^{k}=\mu^\perp$ is an irreducible subspace for this action. Thus by Schur's Lemma $H_k=\lambda_k \cdot \mathbf{1}$ for some $\lambda_k\in\Real$: in this way, for each $k>0$, we only need to compute one number! Still this computation is non trivial (see Proposition~\ref{prop:Hessr}).
\end{rk}
It is not difficult to prove (Corollary \ref{cor:periods1} below) that $\delta_{k,n}$ belongs to the ring of \emph{periods} introduced by Kontsevich and Zagier. Other than this, the nature of these numbers remains mysterious. In fact we do not even have an ``exact'' formula for the simplest non trivial case $\delta_{1,3}$. Nevertheless we can present it as a one-dimensional integral (see Proposition \ref{prop:periods2}).

\begin{thm} \label{intro:thmlineInt}

\beq\delta_{1,n}=-2 \pi^{2n-2}c(n)\int_{0}^1 L(u)^{n-1}\mathrm{sinh}(w(u))w'(u)du
\eeq
where
\beq
c(n)=\frac{\Gamma\left(2n-2\right)}{\Gamma\left(n\right)\Gamma\left(n-2\right)}
\eeq
 $L=F\cdot G$ and $w=\log\left(F/G\right)$ with
\begin{align}
F(u)&:=\int_{0}^{\pi/2} \frac{u \ \sin^2(\theta)}{\sqrt{\cos^2\theta+u^2 \sin^2\theta}}\mathrm{d}\theta	\\
G
(u)&:=\int_{0}^{\pi/2} \frac{u \ \sin^2(\theta)}{\sqrt{\sin^2\theta+u^2 \cos^2\theta}}\mathrm{d}\theta
\end{align}

\end{thm}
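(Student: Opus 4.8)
The plan is to start from formula~\eqref{intro:edegint} specialised to $k=1$, and to note first that here $S^1_+=\{x\in\Real^2:\|x\|=1,\ x_1\ge x_2\ge 0\}$ is an arc of the circle, so \eqref{intro:edegint} is already a one-dimensional integral; the whole task is then to (a) make $\beta_{1,n}$, $p_1$, $q_1$ and the radial function $r$ explicit, and (b) change variables so that the integrand takes the announced form. For (a) the coefficient $\beta_{1,n}$ and the combinatorial factors $p_1,q_1$ are read off from \cite[Theorem~5.13]{PSC}; the only non-elementary ingredient is $r$, and evaluating it along the arc $S^1_+$ is the core of the argument.

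To compute $r$ I would return to the Segre zonoid $C(1,n)\subset\Real^{2\times(n-1)}$ that governs it. Its support function at a matrix $A$ is $\Esp_{(v,w)}|\langle A,vw^\top\rangle|=\Esp_{(v,w)}|v^\top A w|$, the expectation being over $S^1\times S^{n-2}$. Since $C(1,n)$ is invariant under $A\mapsto UAV^\top$ with $(U,V)\in O(2)\times O(n-1)$, it is enough to evaluate on $A=\diag(\sigma_1,\sigma_2)$; integrating first over $w\in S^{n-2}$ and using rotational invariance in the $(w_1,w_2)$-plane reduces the $w$-integral to the scalar $c_n:=\Esp_{S^{n-2}}|w_1|=\frac{\Gamma((n-1)/2)}{\sqrt{\pi}\,\Gamma(n/2)}$, so that
\begin{equation*}
h(\sigma_1,\sigma_2)=\frac{2c_n}{\pi}\int_0^{\pi/2}\sqrt{\sigma_1^2\cos^2\alpha+\sigma_2^2\sin^2\alpha}\ \dd\alpha .
\end{equation*}
Differentiating at the point $(1,u)$ gives $\partial_{\sigma_1}h(1,u)=\frac{2c_n}{\pi}\cdot\frac{G(u)}{u}$ and $\partial_{\sigma_2}h(1,u)=\frac{2c_n}{\pi}F(u)$, the first identity resting on the substitution $\alpha\mapsto\frac\pi2-\alpha$ inside $G$. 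Thus the boundary of the planar body of \cite[Theorem~5.13]{PSC}, parametrised by the slope $u=\sigma_2/\sigma_1$ of the outer normal, is $\frac{2c_n}{\pi}\big(G(u)/u,\ F(u)\big)$, and Euler's relation $\sigma_1\partial_{\sigma_1}h+\sigma_2\partial_{\sigma_2}h=h$ yields $h(1,u)=\frac{2c_n}{\pi}\cdot\frac{u^2F(u)+G(u)}{u}$. This is where the functions $F$ and $G$ of the statement enter.

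Next I would perform the substitution $u=u(\phi)$ turning the arc-length integral over $S^1_+$ into an integral over $u\in[0,1]$, with $\phi=0$ and $\phi=\frac\pi4$ corresponding to the extreme singular-value ratios $u=0$ and $u=1$. Writing $L=FG$ and $w=\log(F/G)$, the claim to verify is that $p_1$, $q_1$, $r^2$ and the Jacobian of the substitution collapse --- after using the identity $F'G-FG'=FG\,w'$ --- into $L^{n-1}\sinh(w)\,w'$, up to the overall constant. Expanding $\sinh$, this integrand equals $\tfrac12\big(F^{n}G^{n-2}-F^{n-2}G^{n}\big)w'$, which is antisymmetric under $\sigma_1\leftrightarrow\sigma_2$ (equivalently $F\leftrightarrow G$): restricting the integral to the chamber $x_1\ge x_2\ge0$ is exactly what extracts its positive part, and this is the structural reason the logarithm $w$ and the hyperbolic sine appear at all. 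Finally I would collect all the constants --- $\beta_{1,n}$, the $\Gamma$-factors produced by the $O(2)\times O(n-1)$ (Weyl/SVD) reduction of $|C(1,n)|$, the power $c_n^{2n-2}$ forced by the exponent $d_{1,n}=2n-2$ in \eqref{intro:edegint}, and the powers of $\frac1{2\pi}$ from the angular averages --- and check that they telescope into $-2\pi^{2n-2}c(n)$ with $c(n)=\Gamma(2n-2)/(\Gamma(n)\Gamma(n-2))$.

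The main obstacle is the middle of step (b): pinning down the exact change of variable $u=u(\phi)$ and verifying that $p_1,q_1,r^2$ and the Jacobian really do collapse into the clean product $L^{n-1}\sinh(w)\,w'$ --- in particular producing $\sinh(\log(F/G))$ rather than some less symmetric rational combination of $F$ and $G$ --- together with the delicate (but in principle routine) bookkeeping of the $\Gamma$- and $\pi$-factors. Useful running checks are that $F(1)=G(1)=\frac\pi4$ while $F/G\to\infty$ as $u\to0^+$ (so $L^{n-1}$ vanishes fast at the left endpoint and the integral converges), and that the resulting $\delta_{1,n}$ comes out positive, as it must.
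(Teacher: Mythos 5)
Your route is essentially the paper's own: the proof there also starts from the $k=1$ specialization of \eqref{intro:edegint} (quoted from \cite{PSC} as the explicit integral over $\theta\in[0,\pi/4]$ with the constant $\pi^{2n-2}c(n)$ already extracted from $\beta_{1,n}$), writes $r(\theta)^2=\|\nabla h(\cos t,\sin t)\|^2$ with $\tan\theta=h_y/h_x$ and $h$ reduced to the elliptic integral \eqref{eq:h1red}, reparametrizes by the normal slope $u=\tan t$, identifies the two partial derivatives of $h$ with $F$ and $G$, and collapses the integrand to $-2L^{n-1}\sinh(w)w'$ --- exactly your steps (a)--(b), your averaging of $|v^\top A w|$ over $S^1\times S^{n-2}$ (the $c_n$ factor) being just a re-derivation of the normalization the paper has already packaged into $\beta_{1,n}$ and \eqref{defh}. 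One caveat that your own formula $\partial_{\sigma_1}h(1,u)\propto G(u)/u$ already exposes: the clean collapse occurs for the pair $\left(F,\,G/u\right)$, i.e.\ for $G$ \emph{without} the factor $u$, which is the definition used in the paper's Proposition~\ref{prop:periods2} (the extra $u$ in the statement above is a misprint), so in step (b) you must take $L=F\cdot(G/u)$ and $w=\log\left(uF/G\right)$ or the announced integrand will not come out.
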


\begin{rk}
One may want to evaluate numerically $\delta_{k,n}$\footnote{This is in fact the topic of a discussion on Mathoverflow: https://mathoverflow.net/questions/260607/expected-number-of-lines-meeting-four-given-lines-or-what-is-1-72}. For this purpose Equation~\eqref{intro:edegint} isn't quite suitable because we don't know the radial function $r$ explicitly. 

On the opposite in Theorem~\ref{intro:thmlineInt} everything is explicit. The functions $F$ and $G$ are elliptic integrals and satisfy a rather simple linear differential equation with rational coefficient. One could then use techniques such as the D-modules machinery (see for example~\cite{Dmod}) to obtain numerical evaluation.
\end{rk}

\subsection{Structure of the paper}
In~\cite{PSC} is initiated the study of the numbers $\delta_{k,n}$. We will first recall what is achieved there as well as some preliminary background in Section~\ref{sec:Prel}. In Section~\ref{sec:Asymp} we compute the asymptotic of $\delta_{k,n}$ as $n$ goes to infinity; this is to be compared with the asymptotic in the \emph{complex}, case which is computed in Section~\ref{sec:Cpx}. In section~\ref{sec:periods} we prove that $\delta_{k,n}$ is a period in the sense of Kontsevitch Zagier. Finally in Section~\ref{sec:lineInt} we provide a formula for $\delta_{1,n}$ for every $n\geq3$ as a one dimensional integral of elliptic functions.

\subsection{Acknowledgment}
We would like to thank Erik Lundberg for the very fruitful discussions that lead to Proposition~\ref{prop:periods2}. We also thank Don Zagier for his interesting remarks on $\delta_{1,3}$.

\section{Preliminaries}\label{sec:Prel}

\subsection{The Gamma Function}

\begin{df}
The \emph{Gamma Function} is defined for all $x>0$ by
\beq
\Gamma(x)=\int_0^{+\infty} t^{x-1}e^{-t} \dd t
\eeq
\end{df}
We will use the two following classical results. For a proof and more details see for example~\cite{AsymGamma}.

\begin{prop}\label{prop:GammaAsymp} For all real numbers $a$ and $b$
\begin{equation} \label{Frac Gam asymptote}
\frac{ \Gamma (x+a)}{ \Gamma (x+b)}=x^{a-b}\left(1+\mathcal{O}\left({x}^{-1}\right)\right).
\end{equation}
\end{prop}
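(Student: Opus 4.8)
The plan is to reduce everything to Stirling's formula with an explicit remainder. I would start from Binet's form: there is a function $\mu(z)$, defined for $z>0$, satisfying $0<\mu(z)<\frac{1}{12z}$, such that
$$\Gamma(z)=\sqrt{2\pi}\,z^{z-1/2}e^{-z}e^{\mu(z)};$$
this is standard and is proved, for instance, in~\cite{AsymGamma}. Applying it at $z=x+a$ and $z=x+b$ and forming the quotient, the problem splits into three factors: the ratio of the exponentials $e^{-(x+a)}/e^{-(x+b)}=e^{b-a}$; the ratio of the Binet correction terms $e^{\mu(x+a)-\mu(x+b)}$, which equals $1+\mathcal{O}(x^{-1})$ because each of $\mu(x+a),\mu(x+b)$ is $\mathcal{O}(x^{-1})$; and the ratio of the power terms $(x+a)^{x+a-1/2}/(x+b)^{x+b-1/2}$.

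The only computation worth spelling out is the power term. Writing $(x+a)^{x+a-1/2}=x^{x+a-1/2}\bigl(1+\tfrac{a}{x}\bigr)^{x+a-1/2}$ and expanding the logarithm, $\bigl(x+a-\tfrac12\bigr)\log\!\bigl(1+\tfrac{a}{x}\bigr)=\bigl(x+a-\tfrac12\bigr)\bigl(\tfrac{a}{x}+\mathcal{O}(x^{-2})\bigr)=a+\mathcal{O}(x^{-1})$, so $(x+a)^{x+a-1/2}=x^{x+a-1/2}e^{a}\bigl(1+\mathcal{O}(x^{-1})\bigr)$, and similarly for $b$. Dividing, the common factor $x^{x-1/2}$ cancels and one is left with $x^{a-b}e^{a-b}\bigl(1+\mathcal{O}(x^{-1})\bigr)$. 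Multiplying the three factors together, the spurious constants $e^{b-a}$ and $e^{a-b}$ cancel and we obtain $\frac{\Gamma(x+a)}{\Gamma(x+b)}=x^{a-b}\bigl(1+\mathcal{O}(x^{-1})\bigr)$.

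I do not expect any genuine obstacle: the statement is classical and the argument above is essentially bookkeeping of error terms. The one point to keep in mind is that the implied constant in $\mathcal{O}(x^{-1})$ depends on the fixed reals $a$ and $b$ (but not on $x$), which is exactly what is needed here, since in all our applications $a$ and $b$ are held fixed while $x\to\infty$. If one prefers to avoid invoking the remainder in Stirling's formula, an equivalent route is to write $\log\Gamma(x+a)-\log\Gamma(x+b)=\int_{b}^{a}\psi(x+t)\,\dd t$ and use the asymptotic $\psi(x+t)=\log x+\mathcal{O}(x^{-1})$ of the digamma function (uniform for $t$ in a bounded interval), which yields the logarithm of the desired estimate directly and then exponentiates.
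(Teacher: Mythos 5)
Your argument is correct. Note, though, that the paper does not actually prove this proposition: it is stated as a classical fact and the reader is referred to the literature (the reference cited right before the statement), so there is nothing in the paper to compare your proof against line by line. Your derivation is the standard one: Binet's form of Stirling's formula with the remainder bound $0<\mu(z)<\tfrac{1}{12z}$, the expansion $\bigl(x+a-\tfrac12\bigr)\log\bigl(1+\tfrac{a}{x}\bigr)=a+\mathcal{O}(x^{-1})$, and the cancellation of the constants $e^{\pm(a-b)}$ all check out, and the implied constant indeed depends only on the fixed $a,b$, which is what the paper needs. The only pedantic point worth keeping in mind is that Binet's formula as you quote it requires $x+a>0$ and $x+b>0$, which holds once $x$ is large enough, so the asymptotic statement is unaffected. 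Your alternative route via $\log\Gamma(x+a)-\log\Gamma(x+b)=\int_b^a\psi(x+t)\,\dd t$ and the uniform estimate $\psi(x+t)=\log x+\mathcal{O}(x^{-1})$ is equally valid and arguably lighter on bookkeeping; either version would serve as a self-contained substitute for the citation.
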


\begin{prop}[Multiplication Theorem]\label{prop:MultThm} For all $x\geq0$ and for all integer $m$ we have
\beq\prod _{k=0}^{m-1}\Gamma \left(x+{\frac {k}{m}}\right)=(2\pi )^{\frac {m-1}{2}}\;m^{{\frac {1}{2}}-mx}\;\Gamma (mx). \eeq
\end{prop}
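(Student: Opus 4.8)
The plan is to deduce the identity from the Bohr--Mollerup characterization of the Gamma function: $\Gamma$ is the unique function $f\colon(0,\infty)\to(0,\infty)$ satisfying $f(1)=1$, $f(x+1)=xf(x)$ and $\log f$ convex. (This is classical; it follows from the integral definition together with the log-convexity of $\Gamma$, which one reads off the defining integral via H\"older's inequality, or one may simply cite \cite{AsymGamma}.) Fix an integer $m\geq 2$, the case $m=1$ being trivial, and set
\beq
G(x):=(2\pi)^{-\frac{m-1}{2}}\,m^{\,x-\frac12}\prod_{k=0}^{m-1}\Gamma\!\left(\frac{x+k}{m}\right),\qquad x>0,
\eeq
which is the closed form the theorem predicts for $\Gamma(x)$ after the substitution $mx\mapsto x$. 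I would then verify the three Bohr--Mollerup properties for $G$. Positivity is clear. For the recursion, the shift $x\mapsto x+1$ turns $\prod_{k=0}^{m-1}\Gamma\!\big(\tfrac{x+1+k}{m}\big)$ into $\prod_{j=1}^{m}\Gamma\!\big(\tfrac{x+j}{m}\big)$, which differs from $\prod_{j=0}^{m-1}\Gamma\!\big(\tfrac{x+j}{m}\big)$ by the single factor $\Gamma\!\big(\tfrac{x}{m}+1\big)/\Gamma\!\big(\tfrac{x}{m}\big)=\tfrac{x}{m}$; together with $m^{(x+1)-1/2}=m\cdot m^{x-1/2}$ this gives $G(x+1)=xG(x)$. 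For log-convexity, $\log G(x)$ is an affine function of $x$ plus $\sum_{k=0}^{m-1}\log\Gamma\!\big(\tfrac{x+k}{m}\big)$, a finite sum of convex functions (each the convex map $t\mapsto\log\Gamma(t)$ composed with an affine map), hence convex.

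The only step that is not pure bookkeeping is the normalization $G(1)=1$. Since the $j=m$ term is $\Gamma(1)=1$, this reduces to the classical evaluation $P:=\prod_{j=1}^{m-1}\Gamma(j/m)=(2\pi)^{(m-1)/2}m^{-1/2}$. I would obtain it by pairing the $j$-th and $(m-j)$-th factors and applying the reflection formula $\Gamma(s)\Gamma(1-s)=\pi/\sin(\pi s)$, getting $P^{2}\prod_{j=1}^{m-1}\sin(\pi j/m)=\pi^{m-1}$, and then using $\prod_{j=1}^{m-1}\sin(\pi j/m)=m/2^{m-1}$ (which comes from letting $z\to1$ in $\prod_{j=1}^{m-1}(z-e^{2\pi i j/m})=1+z+\cdots+z^{m-1}$ and passing to absolute values). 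This yields $P=(2\pi)^{(m-1)/2}/\sqrt m$, hence $G(1)=(2\pi)^{-(m-1)/2}m^{1/2}\,P=1$. As an alternative I would keep in reserve: one can leave the multiplicative constant undetermined, so that Bohr--Mollerup only gives $\prod_{k=0}^{m-1}\Gamma\!\big(x+\tfrac{k}{m}\big)=C_m\,m^{1/2-mx}\Gamma(mx)$ for some $C_m$, and then pin down $C_m=(2\pi)^{(m-1)/2}\sqrt m$ by comparing the Stirling asymptotics of the two sides as $x\to\infty$.

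Once $G\equiv\Gamma$ on $(0,\infty)$, rearranging $G(x)=\Gamma(x)$ and substituting $x\mapsto mx$ gives
\beq
\prod_{k=0}^{m-1}\Gamma\!\left(x+\frac{k}{m}\right)=(2\pi)^{\frac{m-1}{2}}\,m^{\,\frac12-mx}\,\Gamma(mx)
\eeq
for all $x>0$; the value $x=0$ in the statement is understood in the limiting sense, both sides having a simple pole there. The main (mild) obstacle is the constant $(2\pi)^{(m-1)/2}\sqrt m$: verifying the functional equation and the convexity is routine, but identifying this constant needs an independent input such as the reflection formula plus the sine-product identity, or a Stirling comparison. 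A completely different route I would also consider is to insert Euler's limit $\Gamma(z)=\lim_{n}\frac{n!\,n^{z}}{z(z+1)\cdots(z+n)}$ into the left-hand side with $n=Nm$ and simplify the resulting ratio of factorials directly, which again produces the formula up to a constant fixed by Stirling.
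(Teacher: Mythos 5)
Your argument is correct and complete. Note that the paper does not actually prove this statement: it records it as the classical Gauss multiplication theorem and refers to \cite{AsymGamma} for a proof, so your Bohr--Mollerup derivation is a genuinely self-contained alternative rather than a parallel of the paper's route. All three verifications go through: the shift computation $G(x+1)=xG(x)$ is right (the single extra factor $\Gamma(x/m+1)/\Gamma(x/m)=x/m$ cancels against $m^{(x+1)-1/2}=m\cdot m^{x-1/2}$), log-convexity is immediate since $\log\Gamma$ is convex and you compose it with affine maps and add an affine function, and the normalization $G(1)=1$ via the reflection formula together with $\prod_{j=1}^{m-1}\sin(\pi j/m)=m/2^{m-1}$ correctly yields $\prod_{j=1}^{m-1}\Gamma(j/m)=(2\pi)^{(m-1)/2}m^{-1/2}$; the Stirling comparison you keep in reserve would pin down the constant just as well. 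The only caveat is the one you already flag: the statement as printed says ``for all $x\geq 0$'', but at $x=0$ both sides involve $\Gamma$ at a pole, so the identity should be read for $x>0$ (or in a limiting sense), which is exactly what your proof delivers; likewise ``for all integer $m$'' should be read as $m\geq 1$. What the paper's citation buys is brevity; what your proof buys is independence from the reference and an explicit identification of the constant $(2\pi)^{(m-1)/2}\sqrt{m}$, which is indeed the only non-routine step.
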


\subsection{The Grassmannian} \label{homgrass}

\begin{df}
The \emph{real Grassmannian} manifold is the homogeneous space 
\begin{equation*}
\Grass(k,n):=\frac{O(n+1)}{O(n-k)\times O(k+1)}
\end{equation*}
where $O(m)$ is the orthogonal group of $m\times m$ orthogonal matrices.
It is a smooth manifold of dimension 
\beq\dim(\Grass(k,n))=d_{k,n}:=(k+1)(n-k).\eeq
\end{df}

\begin{df}
The \emph{Plücker embedding} is the embedding
\begin{equation}
\begin{split}
\Grass(k,n) 	&\to \Prj(\Lambda^{k+1} \Real^{n+1}) \\
W		&\mapsto \left[a_1 \wedge \ldots \wedge a_{k+1} \right] \label{Plu}
\end{split}
\end{equation}
where $\left\{a_1,\ldots,a_{k+1}\right\}$ is any basis for $W$.
\end{df}
We provide $\Grass(k,n)$ with the Riemannian structure induced by \eqref{Plu}, recalling that the scalar product on $(k+1)$-vectors is given by:
\begin{equation}
\langle u_1\wedge \ldots \wedge u_{k+1},\  v_1\wedge \ldots \wedge v_{k+1}\rangle=\det \left( \langle u_i, v_j\rangle\right)_{1\leq i,j\leq k+1} .
\end{equation}

The volume of the Grassmannian with respect to the volume density associated to the the restriction of the Pl\"ucker metric is \cite[Equation (2.11) and (2.14)]{PSC}:
\beq\label{eq:volG}
\left|\G(k,n) \right|=\frac{\left|O(n+1)\right|}{\left|O(k+1)\right|\left|O(n-k)\right|}=\pi^{\frac{(k+1)(n-k)}{2}}\prod_{i=1}^{k+1}\frac{\Gamma\left(\frac{i}{2}\right)}{\Gamma\left(\frac{n-k+i}{2}\right)}
\eeq

\begin{rk} \label{dim codim}
From~\eqref{homgrass} we see that $\Grass(k,n)=\Grass(n-k-1,n)$. Thus we can and will assume for now on $k+1\leq n+1$.
\end{rk}

\subsection{Convex bodies}

We will need a few elementary results from convex geometry.

\begin{df}
A \emph{convex body}, is a non empty compact convex subset of $\Real^n$. We denote by $\mathscr{K}_n$ the set of convex bodies of $\Real^n$ containing the origin.
\end{df}

 \begin{df}
The \emph{support function} of $K\in\mathscr{K}_n$ is the function 
\begin{align*}
h_K : \Real^n &\to \Real \\
           x         &\mapsto \max\{\langle x,y \rangle \  |\  y\in K\}.
\end{align*}
\end{df}

\begin{df}
The \emph{support hyperplane} $H(K,u)$ of $K\in\mathscr{K}_n$ in the direction $u\in S^{n-1}$ is
\begin{equation*}
H(K,u)\coloneqq \left\{x\in\Real^n \ | \ \langle x,u \rangle=h_K(u)\right\}.
\end{equation*}
\end{df}

\begin{figure}
\centering
\includegraphics[width=5.73cm,height=4.28cm]{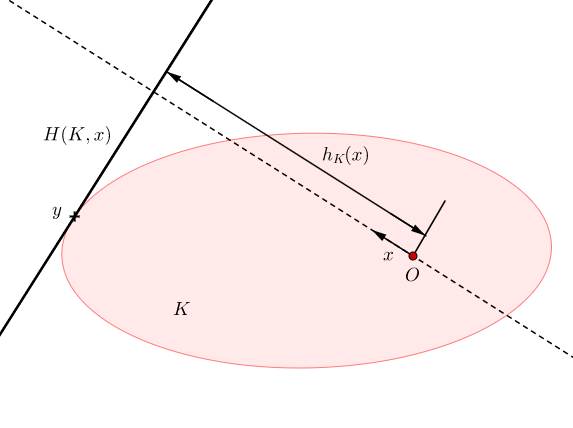}
\caption{The support function}
\label{figsupp}
\end{figure}

Intuitively, the support function, or more precisely its restriction to the sphere $S^{n-1}$, associates to each direction $u\in S^{n-1}$ the distance to the hyperplane $H(K,u)$, see Figure~\ref{figsupp}.
It charcaterizes the body in the sense that $h_{K_1}=h_{K_2}\iff K_1=K_2$. Moreover it satisfies some nice properties making it very useful, see \cite[Section 1.7.1]{SchConv} for proofs and more details:
\begin{align*}
K_1=K_2 &\iff h_{K_1}=h_{K_2} &
 K_1\subset K_2 &\iff h_{K_1}\leq h_{K_2} \\
h_{K_1+K_2}&=h_{K_1}+h_{K_2} &
 h_{\lambda K}&=\lambda h_{K}
 \end{align*}
 \begin{equation*}
x\in K \iff \langle x,y \rangle \leq h_K(y) \ \forall y \in \Real^n
\end{equation*}

The following result will be useful for us, see \cite[Corollary 1.7.3]{SchConv}.
\begin{prop} 
If $h_K$ is differentiable in $x_0 \in \Real^n$, then 
\begin{equation*}
\{\nabla h_K(x_0)\}= \{y\}=\partial K \cap H\left(K,\frac{x_0}{\|x_0\|}\right).
\end{equation*}
\label{gradh}
\end{prop}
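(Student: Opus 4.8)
The plan is to identify \emph{both} sides of the asserted equality with the subdifferential $\partial h_K(x_0)$ of the support function at $x_0$, and then to invoke the classical fact of convex analysis that a finite convex function which is differentiable at a point has there a single subgradient, namely its gradient. Throughout one should assume $x_0\neq 0$, as is implicit in the notation $H(K,x_0/\|x_0\|)$ (if $x_0=0$, differentiability of $h_K$ at $0$ forces $h_K$ linear, hence $K$ a single point, a degenerate situation). The first thing to record is that, since $K$ is bounded, $h_K$ is finite, convex, positively homogeneous of degree one, and subadditive; being a finite convex function on $\Real^n$ it is subdifferentiable at every point, so $\partial h_K(x_0)$ is a nonempty compact convex set.

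The first step is to show that the exposed face equals the subdifferential. Put $F(K,x_0):=\{y\in K \ |\ \langle x_0,y\rangle = h_K(x_0)\}$; by definition of $H(K,\cdot)$ this is exactly $K\cap H(K,x_0/\|x_0\|)$. If $y\in F(K,x_0)$, then for every $z\in\Real^n$ one has $h_K(z)\geq\langle z,y\rangle = h_K(x_0)+\langle z-x_0,y\rangle$, so $y\in\partial h_K(x_0)$. Conversely, suppose $y\in\partial h_K(x_0)$, i.e. $h_K(z)\geq h_K(x_0)+\langle z-x_0,y\rangle$ for all $z$. Taking $z=2x_0$ and $z=0$, and using $h_K(2x_0)=2h_K(x_0)$ and $h_K(0)=0$, gives $\langle x_0,y\rangle\leq h_K(x_0)$ and $\langle x_0,y\rangle\geq h_K(x_0)$, hence $\langle x_0,y\rangle = h_K(x_0)$. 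Taking $z=x_0+w$ and using subadditivity, $h_K(x_0)+\langle w,y\rangle\leq h_K(x_0+w)\leq h_K(x_0)+h_K(w)$, so $\langle w,y\rangle\leq h_K(w)$ for all $w$, which by the characterization $x\in K\iff\langle x,y\rangle\leq h_K(y)\ \forall y$ recalled above means $y\in K$. Thus $y\in F(K,x_0)$, proving $F(K,x_0)=\partial h_K(x_0)$.

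Next I would check that the exposed face lies on the boundary, and that differentiability collapses it. If $y\in F(K,x_0)$ were an interior point of $K$, then $y+\tfrac{\varepsilon}{\|x_0\|}x_0\in K$ for small $\varepsilon>0$, yet $\langle x_0,y+\tfrac{\varepsilon}{\|x_0\|}x_0\rangle = h_K(x_0)+\varepsilon\|x_0\|>h_K(x_0)$, contradicting $\langle x_0,\cdot\rangle\leq h_K(x_0)$ on $K$; hence $F(K,x_0)\subseteq\partial K$ and therefore $F(K,x_0)=\partial K\cap H(K,x_0/\|x_0\|)$. Finally, since $h_K$ is differentiable at $x_0$, the standard convex-analysis dichotomy gives $\partial h_K(x_0)=\{\nabla h_K(x_0)\}$. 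Chaining the three identities yields $\{\nabla h_K(x_0)\}=\partial h_K(x_0)=F(K,x_0)=\partial K\cap H(K,x_0/\|x_0\|)$, which is the claim, with $y:=\nabla h_K(x_0)$.

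The argument is entirely formal, and I do not expect any genuine obstacle: the only point that deserves attention is the first step, namely the precise identification of the subdifferential of a support function with the exposed face — in particular the two substitutions $z=2x_0$, $z=0$ that pin down the value $\langle x_0,y\rangle=h_K(x_0)$, and the subadditivity estimate that yields $y\in K$. The remaining ingredients (finite convex functions are subdifferentiable; a differentiability point has a one-point subdifferential; a point of $K$ lying on a support hyperplane is a boundary point) are classical and can simply be quoted; this is a textbook lemma recorded here for later use.
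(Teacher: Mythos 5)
Your proof is correct: the identification of the exposed face $K\cap H\left(K,x_0/\|x_0\|\right)$ with the subdifferential $\partial h_K(x_0)$, followed by the fact that differentiability of a finite convex function at a point collapses the subdifferential to the gradient, establishes exactly the claimed equality. The paper itself gives no proof but cites Schneider (Corollary 1.7.3), and your argument is essentially the standard one found there, with the relevant side points ($x_0\neq 0$, closedness/compactness of $K$ for the characterization $y\in K\iff\langle w,y\rangle\leq h_K(w)$ for all $w$, and the boundary argument) handled correctly.
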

 
 We will also need an other function representing convex bodies.
 
 \begin{df}
 The radial function of $K\in\mathscr{K}_n$ is
 \begin{align*}
r:S^{n-1} &\to            \Real_+ \\
   u     \     &\mapsto   \sup\left\{t\geq0 \ | \ t u \in K\right\}.
\end{align*}
 \end{df}
In this paper we will be interested in a special class of convex bodies: these are zonoids associated to a probability distribution in $\mathbb{R}^d$. This correspondence between zonoids and probability measures is studied in~\cite{Vit}. See for example~\cite[Theorem 3.1]{Vit}. We introduce the following definition.
\begin{df}[Vitale zonoid]Let $v\in \mathbb{R}^d$ be a random vector such that $\mathbb{E}\|v\|<\infty.$ We define the \emph{Vitale zonoid} associated to $v$ to be the convex body with support function $h(u)=\mathbb{E}h_{[0, v]}(u).$ 
\end{df}

There is a special case of this construction that will be relevant for us. Let $Z\subset \mathbb{R}^d$ be a compact semialgebraic set, and sample $v$ at random from the uniform distribution\footnote{This means the following. First restrict the Riemannian metric of $\mathbb{R}^d$ to the set of smooth points of $Z$, and consider the corresponding volume density. The total volume of the set of smooth points with respect to this density is finite, and we can normalize it to be equal to $1$. In this way $Z$ becomes a probability space (singular points have probability zero). We call the resulting probability distribution the uniform distribution on $Z$.} on $Z$. Then we will denote by $C_Z$ the Vitale zonoid associated to $Z$.
\subsection{Laplace's method}
The main step for the computation of the formula~\eqref{dknasymp} is to apply an asymptotic method for computing integrals, the so called Laplace's method -- in a multidimensional setting. For a proof and more details on this result, one can see \cite[Section II Theorem 1]{WongAsymp} . 

\begin{thm}[Laplace's method] \label{thm:Laplace}
We consider the integral depending on one parameter $\lambda>0$:
\begin{equation*}
I(\lambda)\coloneqq \int_{t_1}^{t_2} e^{-\lambda a(t)} b(t) dt,
\end{equation*}
with $a$, $b$ functions $[t_1,t_2]\to \Real$ satisfying:
\begin{enumerate}
\item $a$ is smooth in a neighborhood of $t_1$ and there exists $\mu>0$ and $a_0\neq0$ such that for $t\to t_1$:
        \begin{equation*}
             a(t)=a(t_1)+a_0(t-t_1)^\mu + \mathcal{O}(|t-t_1|^{\mu+1}).
        \end{equation*}
\item $b$ is smooth in a neighborhood of $t_1$ and there exists $\nu\geq1$ and $b_0\neq0$ such that for $t\to t_1$:
        \begin{equation*}
             b(t)=b_0(t-t_1)^{\nu-1} + \mathcal{O}(|t-t_1|^{\nu}).
        \end{equation*}
\item $t_1$ is a global minimum for $a$ on $[t_1,t_2]$, i.e. $a(t)>a(t_1)$ $\forall t \in ]t_1,t_2[$, moreover for all $\epsilon>0$,
        \begin{equation*}
            \inf_{t\in[t_1+\epsilon,t_2[}\{a(t)-a(t_1)\}>0
        \end{equation*}
\item The integral $I(\lambda)$ converges absolutely for sufficiently large $\lambda$. \\
\end{enumerate}
Then, as $\lambda\to\infty$, we have:
\begin{equation}
\label{Lapexp}
I(\lambda) = e^{-\lambda a(t_1)} \cdot \frac{\Gamma\left(\frac{\nu}{\mu}\right)}{\lambda^{\nu/\mu}} \cdot \frac{b_0}{\mu\cdot a_0^{\nu/\mu}} \left(1+\mathcal{O}(\lambda^{-(1+\nu)/\mu})\right).
\end{equation}
\label{Laplacesmethod}
\end{thm}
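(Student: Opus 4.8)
This is the classical Laplace's method, and the plan is to run the standard two-step argument: localize the integral near the minimizer $t_1$ at the cost of an exponentially small error, then reduce the remaining piece to a model integral whose value is an explicit $\Gamma$-factor.

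First I would factor $I(\lambda)=e^{-\lambda a(t_1)}J(\lambda)$, with $J(\lambda)=\int_{t_1}^{t_2}e^{-\lambda(a(t)-a(t_1))}b(t)\,dt$, and fix $\epsilon>0$ small enough that the expansions (1)--(2) hold on $[t_1,t_1+\epsilon]$. By hypothesis (3) the constant $c_\epsilon:=\inf_{t\in[t_1+\epsilon,t_2)}(a(t)-a(t_1))$ is strictly positive, so for $\lambda\ge\lambda_0$ (with $\lambda_0$ as in (4)) one bounds the tail by $\bigl|\int_{t_1+\epsilon}^{t_2}e^{-\lambda(a(t)-a(t_1))}b(t)\,dt\bigr|\le e^{-(\lambda-\lambda_0)c_\epsilon}\int_{t_1}^{t_2}e^{-\lambda_0(a(t)-a(t_1))}|b(t)|\,dt$, the remaining integral being finite by (4). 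Since the main term has polynomial order $\lambda^{-\nu/\mu}$, this is negligible, and it remains to analyze $J_\epsilon(\lambda):=\int_{t_1}^{t_1+\epsilon}e^{-\lambda(a(t)-a(t_1))}b(t)\,dt$.

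Next, I would straighten $a$ near $t_1$. Hypotheses (1) and (3) force $a_0>0$ (and, since $a$ is smooth, $\mu$ is a positive integer), so by Taylor's theorem $a(t)-a(t_1)=(t-t_1)^\mu g(t)$ with $g$ smooth and $g(t_1)=a_0$, and the map $\phi(t):=(a(t)-a(t_1))^{1/\mu}=(t-t_1)\,g(t)^{1/\mu}$ is smooth, strictly increasing, with $\phi'(t_1)=a_0^{1/\mu}\neq0$; write $\psi=\phi^{-1}\colon[0,\delta]\to[t_1,t_1+\epsilon]$. In the variable $\tau=\phi(t)$ one has $a(t)-a(t_1)=\tau^\mu$ identically, and (2) together with $\psi(\tau)-t_1=a_0^{-1/\mu}\tau+O(\tau^2)$ gives $b(\psi(\tau))\psi'(\tau)=a_0^{-\nu/\mu}b_0\,\tau^{\nu-1}+O(\tau^\nu)$, so that $J_\epsilon(\lambda)=\int_0^\delta e^{-\lambda\tau^\mu}\bigl(a_0^{-\nu/\mu}b_0\,\tau^{\nu-1}+O(\tau^\nu)\bigr)\,d\tau$. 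Finally, for any $\alpha>0$ the substitution $y=\lambda\tau^\mu$ gives $\int_0^\delta e^{-\lambda\tau^\mu}\tau^{\alpha-1}\,d\tau=\frac1\mu\Gamma\!\bigl(\tfrac\alpha\mu\bigr)\lambda^{-\alpha/\mu}+O(e^{-c\lambda})$, the error coming from completing the $y$-integral to $[0,\infty)$; taking $\alpha=\nu$ for the leading part and bounding the $O(\tau^\nu)$ remainder by $O(\lambda^{-(\nu+1)/\mu})$ yields $J_\epsilon(\lambda)=\frac{b_0}{\mu\,a_0^{\nu/\mu}}\Gamma\!\bigl(\tfrac\nu\mu\bigr)\lambda^{-\nu/\mu}\bigl(1+O(\lambda^{-1/\mu})\bigr)$, which together with the first step is \eqref{Lapexp}. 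Retaining one further term of $b(\psi(\tau))\psi'(\tau)$ and repeating with $\alpha=\nu+1$ produces the sharper remainder recorded in the statement; the complete error analysis is \cite[Section II]{WongAsymp}.

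The only genuinely delicate point is the change of variables in the third step --- verifying that $\phi$ is smooth with nonvanishing derivative at $t_1$ (clean here because $a$ is smooth and $a_0>0$, so that $g^{1/\mu}$ is smooth; in Wong's more general setting, where $a,b$ are only required to have algebraic behavior at $t_1$, one instead rescales $u=\lambda^{1/\mu}(t-t_1)$ and passes to the limit by dominated convergence, using $a(t)-a(t_1)\ge\tfrac12 a_0(t-t_1)^\mu$ near $t_1$ as the dominating bound) and that all the $O$-remainders produced are uniform on $[0,\delta]$. Everything else is the elementary $\Gamma$-integral above together with the exponentially small tail estimates of the first step.
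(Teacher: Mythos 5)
The paper contains no proof of this theorem: it is quoted verbatim from the reference \cite{WongAsymp} (Section II, Theorem 1), so there is no internal argument to compare against. Your proposal is the standard proof and is essentially the route taken in that source: factor out $e^{-\lambda a(t_1)}$, kill the contribution away from $t_1$ using hypothesis (3) together with the absolute convergence in (4) (your bound $e^{-(\lambda-\lambda_0)c_\epsilon}\int_{t_1}^{t_2}e^{-\lambda_0(a(t)-a(t_1))}|b(t)|\,dt$ is exactly the right estimate), then straighten the phase via $\tau=(a(t)-a(t_1))^{1/\mu}$ --- legitimate here because smoothness of $a$ plus $a_0>0$ force $\mu\in\N$ and make $\phi(t)=(t-t_1)g(t)^{1/\mu}$ a smooth local diffeomorphism with $\phi'(t_1)=a_0^{1/\mu}$ --- and conclude with the elementary Gamma integral (Watson's lemma), the completion of the $y$-integral costing only an exponentially small error. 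All of this is correct, and the uniformity caveats you flag are real but harmless under the stated hypotheses.

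The only point to amend is your closing sentence. What your computation actually proves is
\begin{equation}
I(\lambda)=e^{-\lambda a(t_1)}\left(\frac{\Gamma\left(\frac{\nu}{\mu}\right)b_0}{\mu\, a_0^{\nu/\mu}}\,\lambda^{-\nu/\mu}+\mathcal{O}\left(\lambda^{-(\nu+1)/\mu}\right)\right),
\end{equation}
i.e.\ a relative error $\mathcal{O}(\lambda^{-1/\mu})$. Retaining one further term of $b(\psi(\tau))\psi'(\tau)$ does not upgrade this to the literal reading of \eqref{Lapexp}: hypotheses (1)--(2) prescribe only the leading coefficients, the next coefficient in the expansion is generically nonzero, and then the relative error of the one-term approximation is genuinely of order $\lambda^{-1/\mu}$, not $\lambda^{-(1+\nu)/\mu}$. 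So \eqref{Lapexp} must be read with the error understood as the absolute error inside the parenthesis above (which is what the cited theorem asserts, the term of order $\lambda^{-(\nu+1)/\mu}$ being the next one in the expansion); your proof establishes exactly that statement, and the final claim about obtaining the ``sharper remainder'' should simply be dropped rather than defended.
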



\subsection{Main characters}
\begin{df} \label{def Segre}
For $k\leq n$ positive integers, the \emph{Segre zonoid} is the convex body $C(k,n)$ defined as follow. Take $p_1, \ \ldots, \ p_m$ uniformly and independently at random on $S^{k}\times S^{n-k-1}\subset\Real^{d_{k,n}}$ and construct the Minkowski sum $K_m:= \frac{1}{m} \sum_{i=1}^m \left[ 0,\ p_i \right]$. Then $K_m$ converges (w.r.t the Haussdorff metric) almost surely as $m$ goes to infinity, and $C(k,n)$ is defined to be its limit.
\end{df}

The fact that this sequence of random compact sets converges almost surely follow from a strong law of large number that one can find in \cite{LLNforSets}. In the language of the previous section, $C(k,n)$ is the Vitale zonoid associated to $S^k\times S^{n-k-1}$.

\begin{rk}
There is an appropriate notion of tensor product for zonoids, see~\cite[Section 3]{AubQuant}. In this sense the Segre zonoid is a tensor of balls.
\end{rk}

If we think of $\Real^{d_{k,n}}$ as the space of $(k+1)\times(n-k)$ matrices, it turns out that the convex body $C(k,n)$ depend only on the \emph{singular values} of these matrices. We then have \cite[Theorem5.13]{PSC}
\begin{prop} \label{vol Segre}
The volume of the Segre zonoid is given by
\begin{equation*}
|C(k, n)| =\frac{2^{d_{k,n}}\cdot \pi^{(k+1)(2n+4-k)}}{d_{k,n}\cdot\Gamma\left(\frac{k+1}{2}\right)\Gamma\left(\frac{k}{2}\right)\cdots \Gamma\left(\frac{1}{2}\right)\cdot\Gamma\left(\frac{n-k}{2}\right)\Gamma\left(\frac{n-k-1}{2}\right)\cdots \Gamma\left(\frac{n-2k}{2}\right) }  \ I_k(n)
\end{equation*}
where

\begin{equation} \label{Ikn}
I_k(n):=\int_{S^{k}_+}{\left(p_k \cdot (r)^{(k+1)}\right)^{(n-k)}q_k \ dS^{k}}.
\end{equation}
With the functions of the coordinates $x=(x_1,\ldots,x_{k+1})\in\Real^{k+1}$ 
\begin{align} \label{def pk qk}
p_k(x):=\prod_{i=1}^{k+1}x_i &, & q_k(x):=p_k(x)^{-(k+1)}\prod_{i<j} \left|x_i^2-x_j^2 \right|,
\end{align}
and where $r$ is the radial function of the convex body in $\R^{(k+1)}$ whose support function is given by \cite[Proposition 5.8]{PSC}:
\begin{equation}
h(x)=\frac{1}{\left(2\pi\right)^{(k+2)/2}}\int_{\Real^{k+1}}\sqrt{ x_1^2 \xi_1^2+\cdots +x_{k+1}^2 \xi_{k+1}^2}\ e^{-\frac{||\xi_i||^2}{2}} \diff\xi
\label{defh}
\end{equation}
and the domain of integration is
\begin{equation}
S^{k}_+:=\left\{x\in\Real^{k+1} \ |  \ ||x||=1,\  x_1\geq \cdots \geq x_{k+1}\geq 0 \right\}. \label{Sk+}
\end{equation}
\end{prop}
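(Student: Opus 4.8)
The plan is to recover this from the definition of $C(k,n)$ as a Vitale zonoid (this is \cite[Theorem 5.13]{PSC}): exploit the $O(k+1)\times O(n-k)$-symmetry of $C(k,n)$ to push the volume computation onto the space of singular values, and then recognize what is left as the integral $I_k(n)$ of~\eqref{Ikn}. Set $D:=d_{k,n}=(k+1)(n-k)$ and recall the elementary polar-coordinate identity $\Vol(K)=\frac1D\int_{S^{D-1}}\rho_K(u)^D\,\dd u$, valid for any star body $K\subset\R^D$ with radial function $\rho_K$; here $S^{D-1}$ is the unit sphere in the space of $(k+1)\times(n-k)$ matrices with the Frobenius metric.

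First I would use the symmetry. The set $S^k\times S^{n-k-1}$, Segre-embedded in $\R^{(k+1)\times(n-k)}$ by $(u,w)\mapsto uw^{T}$, is invariant under the isometric action $M\mapsto PMQ^{T}$ of $O(k+1)\times O(n-k)$, and the Vitale zonoid construction is equivariant, so $C(k,n)$ is invariant under this action; hence so are its support and radial functions, and $\rho_{C(k,n)}(M)$ depends only on $\sv(M)$. Next, \cite[Proposition 5.8]{PSC} evaluates the support function of $C(k,n)$ on matrices with prescribed singular values $x$ and identifies it, up to an ($n$-dependent) scalar $\gamma_{k,n}$, with $h(x)$ from~\eqref{defh}; combining this with von Neumann's trace inequality $\langle M,N\rangle\le\langle\sv(M),\sv(N)\rangle$ and the permutation/sign invariance of $h$, one gets that $M\in C(k,n)$ if and only if $\sv(M)\in\gamma_{k,n}\widetilde K$, where $\widetilde K\subset\R^{k+1}$ is the convex body with support function $h$. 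Consequently $\rho_{C(k,n)}(u)=\gamma_{k,n}\, r\bigl(\sv(u)\bigr)$, with $r$ the radial function of $\widetilde K$.

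Now comes the change of variables. Plugging $\rho_{C(k,n)}(u)=\gamma_{k,n}\,r(\sv(u))$ into the polar formula and passing from $u\in S^{D-1}$ to singular-value coordinates by the SVD $M=P\Sigma_xQ^{T}$ (with $x_1\ge\cdots\ge x_{k+1}\ge 0$, $\Sigma_x$ the corresponding diagonal $(k+1)\times(n-k)$ matrix), the orthogonal-group directions integrate out: the classical SVD Jacobian turns $\dd u$ into a constant $c_1$ — a product of the volumes of $O(k+1)$ and the Stiefel manifold $V_{k+1}(\R^{n-k})$ (equivalently of the relevant homogeneous spaces, cf.~\eqref{eq:volG}), up to finite factors for the $\{\pm1\}^{k+1}$ sign ambiguities — times $\prod_{i<j}(x_i^2-x_j^2)\prod_i x_i^{(n-k)-(k+1)}\,\dd S^k(x)$ on $S^k_+$. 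Since $(n-k)-(k+1)=n-2k-1$, on $S^k_+$ one has $\prod_{i<j}(x_i^2-x_j^2)\prod_i x_i^{n-2k-1}=p_k(x)^{n-k}q_k(x)$, while $r(x)^D=\bigl(r(x)^{k+1}\bigr)^{n-k}$, so the surviving integral is exactly $I_k(n)$. This yields $|C(k,n)|=\dfrac{\gamma_{k,n}^{D}\,c_1}{D}\,I_k(n)$.

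It remains to match the prefactor $\gamma_{k,n}^{D}c_1/D$ with the expression in the statement. For this I would substitute the value of $\gamma_{k,n}$ from \cite[Proposition 5.8]{PSC}, the explicit volumes of $O(k+1)$ and $V_{k+1}(\R^{n-k})$ assembled from the sphere volumes $|S^{m-1}|=2\pi^{m/2}/\Gamma(m/2)$, and the normalizing constant of the SVD Jacobian, and then simplify with standard $\Gamma$-function identities — the Legendre duplication formula and the Multiplication Theorem (Proposition~\ref{prop:MultThm}) — so that the powers of $2$ and $\pi$ collapse to $2^{d_{k,n}}$ and $\pi^{(k+1)(2n+4-k)}$ and the $\Gamma$-factors to the stated product $\Gamma(\tfrac{k+1}{2})\cdots\Gamma(\tfrac12)\cdot\Gamma(\tfrac{n-k}{2})\cdots\Gamma(\tfrac{n-2k}{2})$. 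The main obstacle is precisely this bookkeeping: carrying all the constants (SVD Jacobian normalization, orthogonal-group and Stiefel volumes, the dilation factor $\gamma_{k,n}$) correctly through the reduction and collapsing them into the compact closed form. The conceptual ingredients — the symmetry reduction, the support-function formula of \cite{PSC}, von Neumann's inequality, and the SVD change of variables — are each either standard or already available.
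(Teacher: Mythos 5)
The paper itself does not prove this proposition: it is quoted from \cite[Theorem 5.13]{PSC}, and the citation is the whole ``proof''. Your outline follows what is essentially the strategy of that source, and the qualitative part of it is sound: equivariance of the Vitale construction gives the $O(k+1)\times O(n-k)$-invariance of $C(k,n)$; von Neumann's inequality (equivalently, averaging over the subgroup of sign changes whose fixed subspace is the set of diagonal matrices, which shows that the diagonal slice of $C(k,n)$ coincides with its projection onto that subspace) identifies the slice with the body in $\R^{k+1}$ whose support function is \eqref{defh} up to a dilation; and the polar formula $\Vol(K)=\frac{1}{D}\int_{S^{D-1}}\rho_K^{D}$ combined with the SVD Jacobian $\prod_{i<j}(x_i^2-x_j^2)\prod_i x_i^{\,n-2k-1}$ does produce exactly $\left(p_k\, r^{k+1}\right)^{n-k}q_k$ on $S^k_+$, i.e.\ the integral $I_k(n)$, with a constant assembled from $|O(k+1)|$, the Stiefel volume and the dilation factor.

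The genuine gap is that the proposition is a purely quantitative statement, and your proof stops exactly where its content begins: the claim that the constants ``collapse'' to $2^{d_{k,n}}\pi^{(k+1)(2n+4-k)}$ divided by the stated Gamma factors is promised, not computed. This is not harmless bookkeeping here, because the displayed prefactor does not even pass the internal consistency check of the paper: inserting the displayed $|C(k,n)|$ and \eqref{eq:volG} into \eqref{dkn fund} yields $\delta_{k,n}=\Gamma(d_{k,n})\,\pi^{(k+1)(2n+4-k)+d_{k,n}/2}\,\bigl(\prod_{m=n-2k}^{n+1}\Gamma(m/2)\bigr)^{-1} I_k(n)$, with no power of $2$, whereas \eqref{eq1}--\eqref{eq2} (on which the rest of the paper relies, and which reproduce the known $k=1$ asymptotics) carry the factor $(2\pi)^{k+1}(\pi/2)^{d_{k,n}}$; the two disagree by a nontrivial power of $2$ and of $\pi$, so the prefactor as printed and \eqref{eq2} cannot both be correct. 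Consequently, if you actually carried out the deferred computation (tracking the dilation $\gamma_{k,n}$ from \cite[Proposition 5.8]{PSC}, the orthogonal-group and Stiefel volumes, and the SVD normalization), you would not land on the displayed expression but on a corrected normalization of it. Until that computation is done and the constant pinned down, your argument establishes the structure $|C(k,n)|=c_{k,n}\,I_k(n)$ but neither proves nor tests the explicit formula that the proposition asserts.
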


Let us recall the following~\cite[Lemma 5.10]
{PSC}.
 
\begin{prop}The maximum of  the radial  function  $r$ is
\begin{equation*}
R:=r(\mu)=\max_{u\in S^k}r(u)=\frac{1}{\sqrt{\pi}\sqrt{k+1}}\frac{\Gamma\left(\frac{k+2}{2}\right)}{\Gamma\left(\frac{k+1}{2}\right)}.
\end{equation*}
Moreover $\mu$ is a global maximum on $S^k_+$ and the same is true for the function $p_k$ defined in~\eqref{def pk qk}.
 \label{critofpk} \label{propR}
\begin{proof}
For the first part we refer to~\cite{PSC}. 
Consider $p_k$ as a function on the whole space $\Real^{k+1}$. The $i$\textsuperscript{th} component of the gradient $\nabla p_k$ at the point $x$ is $x_1\ldots \hat{x_i} \ldots x_{k+1}$ (the product of all coordinates except $x_i$). This is normal to the sphere if and only if there is $\lambda\in\Real\backslash\{0\}$ such that 
\begin{equation} \label{gradpki}
\forall i \  x_1\ldots \hat{x_i} \ldots x_{k+1}=\lambda x_i.
\end{equation} 
We see that if one of the $x_i$ is zero then they must all be. Thus if $x\in S^k$ we can assume $x_i\neq 0 \ \forall i$ and multiply both side of~\eqref{gradpki} by $x_i$. We obtain that $x$ is a critical point of $p_k$ restricted to $S^k$ (i.e. $\nabla p_k (x)$ is normal to $S^k$) if and only if $x_i=\pm x_j$ for all  $1\leq i, j \leq k+1$ and $\mu$ is the only point with this property in $S^k_+$. Moreover $\mu$ is a maximum because $\nabla p_k (\mu)$ is pointing outward of the sphere. 
\end{proof}
\end{prop}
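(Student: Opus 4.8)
The statement splits into two independent halves, and the one about $p_k$ is the elementary one. To see that $p_k(x)=\prod_{i=1}^{k+1}x_i$ attains its maximum on $S^k_+$ at $\mu$, I would apply the AM--GM inequality to $x_1^2,\dots,x_{k+1}^2$: it gives $\prod_{i}x_i^2\le\bigl(\tfrac1{k+1}\sum_i x_i^2\bigr)^{k+1}=(k+1)^{-(k+1)}$, with equality exactly when all the $x_i$ coincide, i.e. at $x=\mu$. (Alternatively one may locate the critical points of $p_k|_{S^k}$ with a Lagrange multiplier and check that $\nabla p_k(\mu)$ points outward.) Either way this is a one-line verification, so the content of the proposition lies in the assertion about the radial function $r$.

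For $r$ the plan is to play it off against the support function $h$ from \eqref{defh}, using two elementary facts. First, for every $u\in S^k$ one has $r(u)\le h(u)$: the point $r(u)u$ lies in $K$, so $r(u)=\langle r(u)u,u\rangle\le h(u)$. Second, $r(\mu)=h(\mu)$: the integrand in \eqref{defh} is smooth in $x$ near $\mu$ (all coordinates of $\mu$ are nonzero), so by dominated convergence $h$ is differentiable near $\mu$; since $h$ is invariant under permuting the coordinates, $\nabla h(\mu)$ is fixed by $\grpsym_{k+1}$, hence a multiple of $\mu$, and Euler's identity for the $1$-homogeneous function $h$ forces $\nabla h(\mu)=h(\mu)\mu$; by Proposition~\ref{gradh} this vector belongs to $\partial K$, so $h(\mu)\mu\in K$ and therefore $r(\mu)\ge h(\mu)$. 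Granting also that $h$ attains its maximum on $S^k$ at $\mu$, we get $\max_{u\in S^k}r(u)\le\max_{u\in S^k}h(u)=h(\mu)=r(\mu)$, which is the claim and shows $\mu$ is a global maximum of $r$ on $S^k_+$. The value $R=h(\mu)$ is then a routine Gaussian integral: for $x=\mu$ the integral reduces to $\frac1{(2\pi)^{(k+2)/2}}(k+1)^{-1/2}\int_{\Real^{k+1}}\|\xi\|\,e^{-\|\xi\|^2/2}\,\dd\xi$, and integrating in polar coordinates yields $\tfrac1{\sqrt\pi\sqrt{k+1}}\,\Gamma(\tfrac{k+2}{2})/\Gamma(\tfrac{k+1}{2})$.

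The one step that is not purely formal, and where I expect the real work to be, is the claim that $h|_{S^k}$ is maximized at $\mu$. I would prove it through the change of variables $v_i=u_i^2$, which identifies $S^k_+$ with the simplex $\Delta=\{v\ge0,\ \sum_i v_i=1\}$ and rewrites $h$ there as $v\mapsto\frac1{(2\pi)^{(k+2)/2}}\int_{\Real^{k+1}}\sqrt{\sum_i v_i\xi_i^2}\,e^{-\|\xi\|^2/2}\,\dd\xi$. For each fixed $\xi$ the map $v\mapsto\sqrt{\sum_i v_i\xi_i^2}$ is concave, being a square root of a nonnegative linear form, so the integral is a concave $\grpsym_{k+1}$-invariant function on $\Delta$; any such function attains its maximum at the barycenter, since averaging a maximizer over its $\grpsym_{k+1}$-orbit cannot decrease the value and produces the barycenter. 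As $h$ is moreover invariant under sign changes of the coordinates, its maximum over $S^k$ coincides with its maximum over $S^k_+$, attained at $v=\tfrac1{k+1}\mathbf1$, i.e. at $u=\mu$. (For this last point one could alternatively just cite \cite{PSC}, as the proposition does for its ``first part''.) Apart from this, the only things to watch are the Gamma-function bookkeeping in $R$ and the differentiability of $h$ on $\Real^{k+1}\setminus\{0\}$.
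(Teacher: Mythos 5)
Your argument is correct, but it is a genuinely different (and more self-contained) route than the paper's. The paper proves only the $p_k$-statement, via a Lagrange-multiplier analysis of $\nabla p_k$ on $S^k$, and simply cites \cite{PSC} for the whole first part about $r$ and the value of $R$; your AM--GM argument replaces the multiplier computation and has the advantage of giving maximality directly (the paper's ``$\nabla p_k(\mu)$ points outward'' step is what certifies the maximum there), while your treatment of $r$ supplies a proof of the part the paper outsources: the pointwise bound $r\leq h$ on $S^k$, the identity $r(\mu)=h(\mu)$ via $\grpsym_{k+1}$-invariance, Euler's relation $\nabla h(\mu)=h(\mu)\mu$ and Proposition~\ref{gradh}, and the maximality of $h|_{S^k}$ at $\mu$ by concavity in the variables $v_i=u_i^2$ plus orbit-averaging. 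All of these steps check out (differentiability of $h$ near $\mu$ is justified as you say, concavity of $v\mapsto\sqrt{\sum_i v_i\xi_i^2}$ passes through the integral, and the Gaussian polar computation of $h(\mu)$ reproduces exactly the stated $R$), so what you buy is independence from \cite{PSC} at the cost of invoking the support-function machinery; note also that your Euler-identity step yields $\nabla h(\mu)=R\,\mu$ rather than the normalization $\nabla h(\mu)=\mu$ recalled later in the paper, which is a harmless discrepancy of scaling in that later remark and does not affect either proof, since only the fact that $\nabla h(\mu)$ is a positive multiple of $\mu$ (equivalently $h(\mu)\mu\in\partial D(k)$) is used.
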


We will also need to define the following number.

\begin{df} \label{def Lambdak}
To each $a=(a_1,\ \ldots\ ,\ a_k)\in\Real^{k}$ we associate the polynomial of degree $k+1$ $p_a(X):=X^{k+1}+a_1 X^{k-1}-a_2 X^{k-2}+\cdots \pm a_k$ (note the absence of the term of degree $k$). Let $\mathcal{R}_k:=\left\{ a\in\Real^k \ | \ \text{ all the roots of $p_a$ are reals} \right\}$. Then 
\begin{equation*}
\Lambda_k := \int_{\mathcal{R}_k}e^{a_1}\   da.
\end{equation*}
\end{df}

The number $\Lambda_k$ has an other expression if we see things from the \emph{roots} point of view. For that purpose we introduce the square root of the \emph{discriminant} in $\Real^{k+1}$:
\begin{equation*}
\sqrt{\Delta}:=\prod_{i<j} (x_i-x_j) \ \ \forall x=(x_1, \ \ldots \ , x_{k+1})\in\Real^{k+1}.
\end{equation*}
We also let $\mu:=\frac{1}{\sqrt{k+1}}(1,\ldots,1)\in S^k \subset \Real^{k+1}$ and $F_k:=\left\{ x_1\geq x_2 \geq \cdots \geq x_{k+1} \right\}$. Note that on $F_k$, $\sqrt{\Delta}$ is non negative so the notation makes sense.

\begin{prop} \label{old to new Lambda}
For all $k$ positive integer
\begin{equation*}
\Lambda_k =  \Gamma\left( \frac{K+k}{2}\right) \frac{2^{\frac{K+k-2}{2}}}{\sqrt{k+1}} \int_{F_k\cap S^k\cap \mu^\perp} \sqrt{\Delta} \ dS^{k-1}
\end{equation*}
where $K=\binom{k+1}{2}$ and $dS^{k-1}$ is the standard spherical measure of the sphere of $\mu^\perp$ embedded in $\Real^{k+1}$.
\begin{proof}
First by a spherical change of coordinates and by homogeneity (of degree $K$) of $\sqrt{\Delta}$ we have
\begin{equation}\label{eq:LambdaExp}
\int_{F_k\cap \mu^\perp}e^{-\frac{\|v\|^2}{2}} \sqrt{\Delta} \ dv=  \Gamma\left( \frac{K+k}{2}\right) 2^{\frac{K+k-2}{2}} \int_{F_k\cap S^k\cap \mu^\perp} \sqrt{\Delta} \ dS^{k-1}
\end{equation}
where $dv$ is the flat Lebesgue measure on $\mu^\perp$ induced by its embedding in $\Real^{k+1}$.

On another hand, let us introduce the fundamental symmetric polynomials 
\begin{align*}
\sigma_1		&= x_1+\ \cdots \ + x_{k+1}	\\
\sigma_2		&=\sum_{i<j} x_i x_j			\\
\vdots		&						\\
\sigma_{k+1}	&= x_1 \cdots x_{k+1}.
\end{align*}
This is a change of variables on $F_k$ whose Jacobian is precisely $\det\left(\frac{\partial \sigma}{\partial x}\right)= \sqrt {\Delta}$. In fact,  $\det\left(\frac{\partial \sigma}{\partial x}\right)$ is a monic polynomial of the same degree of $\sqrt{\Delta}$; moreover, it is easy to see that for every $i\neq j$ the polynomial $(x_i-x_j)$ divides $\det\left(\frac{\partial \sigma}{\partial x}\right)$, therefore they are equal.

Now consider a new orthonormal basis in $\Real^{k+1}$ with first unit vector $\mu$. Let $\tilde{x}$ be the coordinates in this new basis and let $v=(\tilde{x}_2,\ldots, \tilde{x}_{k+1})$.
Observe that $\tilde{x}_1= \langle x, \mu\rangle=\sigma_1/\sqrt{k+1}$. Thus we can write the Jacobian matrix
\begin{equation*}
\frac{\partial \tilde{x}}{\partial \sigma}=\left( \begin{matrix} 	\frac{1}{\sqrt{k+1}}				& 0	& \cdots							&0 	\\
																		&	&								&	\\
											\frac{\partial v}{\partial \sigma_1}	&	&\frac{\partial v}{\partial \sigma_{\geq2}}	& 	\\
																		&	&								&	\\
																			\end{matrix} \right).
\end{equation*} 
This implies that $\det\left(\frac{\partial \tilde{x}}{\partial \sigma} \right)= \frac{1}{\sqrt{k+1}}\det\left(\frac{\partial v}{\partial \sigma_{\geq2}} \right)$. On another hand $\tilde{x}$ is an orthogonal transformation of $x$ so $\det\left(\frac{\partial \tilde{x}}{\partial \sigma} \right)=\det\left(\frac{\partial {x}}{\partial \sigma} \right)=1/\sqrt{\Delta}$. Altogether this gives
\beq\label{eq:dvds}
\det\left(\frac{\partial v}{\partial \sigma_{\geq2}} \right)=\sqrt{k+1}/\sqrt{\Delta}
\eeq

Moreover we see that $(\sigma_1)^2=\|x\|^2+2\sigma_2$. Restricted to $v\in\mu^\perp=\{\sigma_1=0\}$ this gives $-\frac{\|v\|^2}{2}=\sigma_2$ . 

Next we let $a_i:=\sigma_{i+1}$ for $1\leq i \leq k$ and apply the change of variable $v\to a$ to the left hand side of~\eqref{eq:LambdaExp}. The Jacobian is (by~\eqref{eq:dvds}) $\sqrt{\Delta} \ \dd v=\sqrt{k+1}\, \dd a$. This gives 
\beq
\int_{F_k\cap \mu^\perp}e^{-\frac{\|v\|^2}{2}} \sqrt{\Delta} \ dv=\sqrt{k+1}\,  \Lambda_k
\eeq
From which we conclude.
\end{proof}
\end{prop}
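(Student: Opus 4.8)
The plan is to realize both the coefficient-side description of $\Lambda_k$ from Definition~\ref{def Lambdak} and the root-side spherical integral on the right as two evaluations of the single Gaussian integral
\begin{equation*}
J_k:=\int_{F_k\cap \mu^\perp} e^{-\frac{\|v\|^2}{2}}\sqrt{\Delta}\ dv,
\end{equation*}
where $dv$ is Lebesgue measure on the hyperplane $\mu^\perp\subset\Real^{k+1}$; equating the two evaluations gives the claim. The first (easy) evaluation is by polar coordinates: on the ordered chamber $F_k$ the polynomial $\sqrt{\Delta}=\prod_{i<j}(x_i-x_j)$ is nonnegative and homogeneous of degree $K=\binom{k+1}{2}$, so writing $v=\rho\,\omega$ with $\rho\geq 0$ and $\omega$ in the unit sphere $S^k\cap\mu^\perp$ of $\mu^\perp$ makes $J_k$ factor as $\left(\int_0^\infty \rho^{K+k-1}e^{-\rho^2/2}\,d\rho\right)\cdot\int_{F_k\cap S^k\cap\mu^\perp}\sqrt{\Delta}\ dS^{k-1}$, and the substitution $t=\rho^2/2$ evaluates the radial factor to $2^{\frac{K+k-2}{2}}\,\Gamma\!\left(\frac{K+k}{2}\right)$.

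The substantive half is to show that $J_k=\sqrt{k+1}\,\Lambda_k$ by passing to the elementary symmetric functions $\sigma=(\sigma_1,\dots,\sigma_{k+1})$ of $x=(x_1,\dots,x_{k+1})$. On the interior of $F_k$ the map $x\mapsto\sigma$ is a diffeomorphism, and its Jacobian $\det(\partial\sigma/\partial x)$ is a monic polynomial of degree $K$ divisible by each $(x_i-x_j)$, hence equals $\sqrt{\Delta}$. To see the effect of restricting to $\mu^\perp=\{\sigma_1=0\}$ I would work in an orthonormal basis of $\Real^{k+1}$ with first vector $\mu$, so the new coordinates are $\tilde x_1=\langle x,\mu\rangle=\sigma_1/\sqrt{k+1}$ and $v=(\tilde x_2,\dots,\tilde x_{k+1})$: the block-triangular shape of $\partial\tilde x/\partial\sigma$ gives $\det(\partial\tilde x/\partial\sigma)=\frac{1}{\sqrt{k+1}}\det(\partial v/\partial\sigma_{\geq 2})$, while $\det(\partial\tilde x/\partial\sigma)=\det(\partial x/\partial\sigma)=1/\sqrt{\Delta}$ since $\tilde x$ is an orthogonal image of $x$, whence $\det(\partial v/\partial\sigma_{\geq 2})=\sqrt{k+1}/\sqrt{\Delta}$. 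Now set $a_i:=\sigma_{i+1}$ for $1\leq i\leq k$: since $\prod_i(X-x_i)=X^{k+1}-\sigma_1X^k+\sigma_2X^{k-1}-\cdots$ and $\sigma_1=0$ on $\mu^\perp$, this matches exactly the normalization $p_a(X)=X^{k+1}+a_1X^{k-1}-a_2X^{k-2}+\cdots$ of Definition~\ref{def Lambdak}, so $v\mapsto a$ maps $F_k\cap\mu^\perp$ onto $\mathcal{R}_k$. Finally $\sigma_1^2=\|x\|^2+2\sigma_2$ forces $\sigma_2=-\|v\|^2/2$ on $\mu^\perp$, so $e^{-\|v\|^2/2}=e^{a_1}$, and the measure transforms as $\sqrt{\Delta}\,dv=\sqrt{k+1}\,da$; hence $J_k=\sqrt{k+1}\int_{\mathcal{R}_k}e^{a_1}\,da=\sqrt{k+1}\,\Lambda_k$. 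Combining with the polar computation and dividing by $\sqrt{k+1}$ gives the stated formula.

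I expect the main obstacle to be the bookkeeping for the restriction to $\mu^\perp$: getting the two determinant computations to interlock so as to pin down $\det(\partial v/\partial\sigma_{\geq 2})$, and --- more conceptually --- checking that $v\mapsto a$ maps the chamber $F_k\cap\mu^\perp$ \emph{bijectively} onto $\mathcal{R}_k$, i.e. that ``all roots real'' is exactly the image of the ordered-root chart rather than merely containing it (this uses that a real polynomial with all real roots has its root tuple at a unique point of $F_k$). The polar/homogeneity step and the identification of the Jacobian with $\sqrt{\Delta}$ are standard.
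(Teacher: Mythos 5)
Your proposal is correct and follows essentially the same route as the paper: both evaluate the Gaussian integral $\int_{F_k\cap\mu^\perp}e^{-\|v\|^2/2}\sqrt{\Delta}\,dv$ once by polar coordinates (using homogeneity of $\sqrt{\Delta}$) and once via the elementary symmetric functions, with the same identification $\det(\partial\sigma/\partial x)=\sqrt{\Delta}$, the same block-triangular trick in the $\mu$-adapted orthonormal basis yielding $\det(\partial v/\partial\sigma_{\geq 2})=\sqrt{k+1}/\sqrt{\Delta}$, and the same use of $\sigma_2=-\|v\|^2/2$ on $\{\sigma_1=0\}$. Your explicit remark that $F_k\cap\mu^\perp$ maps bijectively onto $\mathcal{R}_k$ is a point the paper leaves implicit, but it is not a different argument.
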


\section{Asymptotics}
Fix an integer $k>0$. 
Given $L\in\Grass(n-k-1,n)$ we have a corresponding \emph{Schubert variety} in the Grassmannian $\Grass(k,n)$:
\begin{equation} \label{def Schub}
\Omega(L):=\left\{\ell\in\Grass(k,n) \ | \ \ell\cap L \neq \emptyset \right\}.
\end{equation}
It is a singular subvariety of codimension $1$ of the Grassmannian\footnote{As a Young tableau, it correspond to a single square in the upper left corner.} and its volume is computed in~\cite{PSC}.

Recall that we are interested in the computation of the numbers
\begin{equation}
\delta_{k,n}:=\Esp{ \# \left\{g_1\cdot\Omega\left(L\right) \cap \ldots \cap g_{(k+1)(n-k)}\cdot\Omega\left(L\right) \right\} }.
\end{equation}

for which the following formula is established in~\cite{PSC}: 
\begin{equation} \label{dkn fund}
\delta_{k,n}=\frac{d_{k,n}!}{2^{d_{k,n}}} \cdot |\Grass(k,n)| \cdot |C(k,n)|
\end{equation}
where $C(k,n)$ is the convex body defined in Definition~\ref{def Segre} (here $d_{k,n}=(k+1)(n-k)$ is the dimension of the Grassmanian $\Grass(k,n)$).

Using Proposition~\ref{vol Segre} and \cite[equation (2.11)]{PSC} we get 
\begin{equation}\label{eq1}
\delta_{k,n}=\beta_{k,n}\ I_k(n)
\end{equation}
with $I_k(n)$ defined in~\eqref{Ikn} and
\begin{equation}\label{eq2}
\beta_{k,n}:=(2\pi)^{k+1}\left(\frac{\pi}{2}\right)^{d_{k,n}} \frac{\Gamma\left(d_{k,n}\right)}{\Gamma(\frac{n+1}{2})\Gamma(\frac{n}{2})\cdots\Gamma(\frac{n-2k}{2})}.
\end{equation}


\subsection{Asymptotic of $\delta_{k,n}$ as $n\to\infty$ } \label{sec:Asymp}

In this section we are interested in computing the asymptotic (with $k$ still fixed) of $\delta_{k,n}$ as $n$ goes to $\infty$.

In order to compute this asymptotic we will apply Laplace's Method (Theorem~\ref{thm:Laplace}) to Equation~\eqref{Ikn} using the fact that the global maximum of $p_k \cdot (r)^{(k+1)}$ is reached at $\mu$ (Proposition~\ref{propR}). There are two major obstacles that arise.
 First: we don't know explicitly the radial function $r$. Second: one needs to compute the Hessian of $p_k \cdot (r)^{(k+1)}$.

To solve the first problem the key is Proposition~\ref{gradh} that will allow us to express $r$ in terms of the support function, see Equation~\eqref{rbyh} below.

To deal with the second difficulty we will prove that the Hessian of  $p_k \cdot (r)^{(k+1)}$ is a multiple of the identity. To do so we use the fact that the convex body defined by $r$ is invariant by the action of the symmetric group by permutation of coordinates. This implies that the Hessian is a morphism of representations on an irreducible subspace and we can use Schur's Lemma (see Proposition~\ref{Schur} below).

Let us denote $D(k)$ the convex body defined by $r$ and $\partial D(k)$ its boundary. Using Proposition~\ref{gradh}, we have the following commutative diagram:

\begin{center}
\begin{tikzcd}
S^{k} \arrow[r, "\nabla h"] \arrow[rr, bend right, "\psi"]&\partial D(k) \arrow[r, "\pi"] \arrow[rr, bend left, "\|\cdot\|"] &S^{k} \arrow[r, "r"] & \Real_+
\end{tikzcd}
\end{center}

where $\pi(x)=\frac{x}{||x||}$ and $\psi=\pi\circ\nabla h$. Thus assuming for one moment that $\psi$ is a local diffeomorphism around $\mu$, we can write near this point 

\begin{equation}
r(x)^2=\|(\nabla h) \left(\psi^{-1} (x) \right) \|^2.
\label{rbyh}
\end{equation}
Here the gradient of $h$ is the gradient of the function on the whole space $\Real^{k+1}$ and restricted to the sphere afterward only, but for the sake of simplicity we omit the restriction in the notation of the function.

Thus if we can compute the Taylor polynomial of $\nabla h$ at $\mu$, we would at the same time get the Taylor polynomial of $r$ using the following Lemma.

\begin{lemma}
Let $f_1:\Real^p\to\Real^q$, $f_2:\Real^n\to\Real^p$ and $f_3:\Real^m\to\Real^n$ be $\mathcal{C}^2$ functions. The second derivative of the composition at $0\in\Real^m$ is:
\begin{multline*}
\Diff^2_0(f_1\circ f_2 \circ f_3) (x,x)=\Diff_{f_2(f_3(0))}f_1\cdot\left[\Diff_{f_3(0)}f_2\cdot \Diff^2_0 h(x,x)+\Diff^2_{f_3(0)}f_2\left(\Diff_0 f_3\cdot x, \Diff_0 f_3\cdot x\right) \right] \\
+\Diff^2_{f_2(f_3(0))}f_1\left(\Diff_{f_3(0)}f_2\cdot\Diff_0 f_3\cdot x, \Diff_{f_3(0)}f_2\cdot\Diff_0 f_3\cdot x \right) \ \forall x \in\Real^m.
\end{multline*}
\begin{proof}
Let $f$ and $g$ be $\mathcal{C}^2$ functions between real vector spaces that can be composed. We have the Taylor series: 
\begin{equation*}
g(x)=g(0)+\Diff_{0}g\cdot x +\frac{1}{2}\Diff^2_{0}g(x,x)+\mathcal{O}(||x||^3).
\end{equation*} 
Writing the Taylor series at $g(0)$ for $f(g(x))=f(g(0)+\Diff_{0}g\cdot x +\Diff^2_{0}g(x,x)+\mathcal{O}(||x||^3))$ and putting together the terms of second order we get
\begin{equation*}
\Diff_{0}^2(f\circ g)(x,x)=\Diff_{g(0)}f\cdot\Diff^2_{0} g (x,x) +\Diff^2_{g(0)}f(\Diff_{0} g\cdot x, \Diff_{0} g\cdot x).
\end{equation*}
Replacing $f$ by $f_1$ and $g$ by $f_2\circ f_3$, we get the result.
\end{proof}
\end{lemma}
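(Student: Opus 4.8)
The plan is to bootstrap the three-fold case from the two-fold case, since the stated identity follows by applying the analogous identity for a single composition twice. First I would record the second-order chain rule for two composable $\mathcal{C}^2$ maps $f$ and $g$ between real vector spaces:
\begin{equation*}
\Diff^2_0(f\circ g)(x,x)=\Diff_{g(0)}f\cdot\Diff^2_0 g(x,x)+\Diff^2_{g(0)}f\left(\Diff_0 g\cdot x,\Diff_0 g\cdot x\right).
\end{equation*}
To obtain this, I would set $h:=g(x)-g(0)=\Diff_0 g\cdot x+\tfrac{1}{2}\Diff^2_0 g(x,x)+\mathcal{O}(\|x\|^3)$, substitute it into the second-order Taylor expansion of $f$ at $g(0)$, namely $f(g(0)+h)=f(g(0))+\Diff_{g(0)}f\cdot h+\tfrac{1}{2}\Diff^2_{g(0)}f(h,h)+\mathcal{O}(\|h\|^3)$, and then collect the terms that are homogeneous of degree two in $x$. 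The linear term $\Diff_{g(0)}f\cdot h$ contributes $\Diff_{g(0)}f\cdot(\Diff_0 g\cdot x)$ at first order and $\tfrac{1}{2}\Diff_{g(0)}f\cdot\Diff^2_0 g(x,x)$ at second order; the quadratic term $\tfrac{1}{2}\Diff^2_{g(0)}f(h,h)$ contributes $\tfrac{1}{2}\Diff^2_{g(0)}f(\Diff_0 g\cdot x,\Diff_0 g\cdot x)$ at second order; all remaining contributions are $\mathcal{O}(\|x\|^3)$. Matching the coefficient of $\tfrac{1}{2}$ with the Taylor expansion of $f\circ g$ itself gives the displayed formula.

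Next I would apply this identity with $f=f_1$ and $g=f_2\circ f_3$. Abbreviating $b:=(f_2\circ f_3)(0)=f_2(f_3(0))$, this reads
\begin{equation*}
\Diff^2_0(f_1\circ f_2\circ f_3)(x,x)=\Diff_{b}f_1\cdot\Diff^2_0(f_2\circ f_3)(x,x)+\Diff^2_{b}f_1\left(\Diff_0(f_2\circ f_3)\cdot x,\Diff_0(f_2\circ f_3)\cdot x\right).
\end{equation*}
Now the ordinary chain rule gives $\Diff_0(f_2\circ f_3)=\Diff_{f_3(0)}f_2\cdot\Diff_0 f_3$, and the two-fold identity applied once more to $f_2\circ f_3$ gives
\begin{equation*}
\Diff^2_0(f_2\circ f_3)(x,x)=\Diff_{f_3(0)}f_2\cdot\Diff^2_0 f_3(x,x)+\Diff^2_{f_3(0)}f_2\left(\Diff_0 f_3\cdot x,\Diff_0 f_3\cdot x\right).
\end{equation*}
Substituting these two expressions into the previous display, and recalling $b=f_2(f_3(0))$, reproduces exactly the claimed formula (the term written $\Diff^2_0 h$ in the statement being $\Diff^2_0 f_3$).

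Finally, about where care is needed: there is no genuine analytic obstacle here, the whole content being the bookkeeping of composing truncated Taylor expansions. The two points where an error is easy to make are keeping track of the base point at which each first and second derivative of $f_1$ and of $f_2$ is evaluated, and verifying that the pieces discarded in the substitution — in particular the leftover part of $\tfrac{1}{2}\Diff^2_{g(0)}f(h,h)$ coming from the quadratic part of $h$, together with the $\mathcal{O}(\|h\|^3)$ remainder — really are $\mathcal{O}(\|x\|^3)$ and therefore do not pollute the second-order term. Both follow at once from $h=\Diff_0 g\cdot x+\mathcal{O}(\|x\|^2)$ and the bilinearity of the second differential.
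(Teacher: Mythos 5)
Your proof is correct and follows essentially the same route as the paper: establish the two-fold second-order chain rule by composing Taylor expansions, then apply it with $f=f_1$ and $g=f_2\circ f_3$ (your write-up merely makes explicit the second application of the two-fold formula to $f_2\circ f_3$, which the paper leaves implicit). Nothing further is needed.
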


In particular if $f:\Real^m \to \Real$ and $g:\Real^n\to\Real^m$:

\begin{multline}
\Diff^2_a\left((f \circ g)^2\right) (x,x)=2f(g(a))\cdot\left[\Diff_{g(a)}f\cdot \Diff^2_ag(x,x)+\Diff^2_{g(a)}f\left(\Diff_a g\cdot x, \Diff_a g\cdot x\right) \right] \\
+2\left(\Diff_{g(a)}f\cdot\Diff_a g\cdot x \right)^2 \ \forall x \in\Real^m.
\label{devcomp2}
\end{multline}

    For now on we will work in exponential coordinates at $\mu$. That is for all $x\in T_{\mu}S^{k}=\mu^{\perp}$ corresponds the point $\cos ||x|| \ \mu + \sin ||x|| \ \frac{x}{||x||} \ \in S^k$, in particular in these coordinates $\mu=0$. Thus $\psi$ and $\frac{\partial h}{\partial x_i}|_{S^k}$ can be considered as functions on $\Real^k\cong \mu^\perp$ on these coordinates.

We would like to replace $g$ by $\psi^{-1}$ and $f$ by $\frac{\partial h}{\partial x_i}|_{S^k}$ in~\eqref{devcomp2} and sum over $i\in\left\{1,\ldots,k+1\right\}$ to get $\Diff^2_\mu(r^2)$. The problem being that we would need to compute all the entries of the Hessian matrix which are approximately $k^2$. This increasing complexity could make the computation impossible, however as we pointed out before thanks to the symmetries of the function $h$, it turns out that this matrix is a multiple of the identity.

\begin{prop}
\label{Schur}
If $f:\mu^{\perp}\to\Real$ is $C^2$ in a neighborhood of $0$ and invariant by the standard action of $\grpsym_{k+1}$ on $\Real^{k+1}$ then its Hessian at $0$ is a multiple of the identity, i.e. there is $C_f \in \Real$ such that
\begin{equation*}
\Diff^2_0f(x,x)=C_f ||x||^2 \ \forall x \in \mu^\perp.
\end{equation*}
\begin{proof}
First note that $\mu\in\Real^{k+1}$ is fixed by the standard action of $\grpsym_{k+1}$ (that is by permutation of coordinates). Thus the action decomposes in $\Real^{k+1}=\Real \mu \oplus \mu^{\perp}$ and the action on $\mu^{\perp}$ is well defined. Moreover the invariant subspace $\mu^{\perp}$ is irreducible.

Now, let $P\in\grpsym_{k+1}$. Since $f$ is ${C}^2$ we have
\begin{align*}
f(x)		&=f(0)+\Diff_0f\cdot x + \frac{1}{2} x^T H x +\mathcal{O}\left(||x||^3\right) \\
f(P x)	&=f(0)+\Diff_0f\cdot P x + \frac{1}{2} (P x)^T H (P x) +\mathcal{O}\left(||x||^3\right)
\end{align*}
where we wrote the quadratic form $\Diff_0^2f$ in matrix form: $\Diff_0^2f(x,x)=x^T H x$ for a certain symmetric matrix $H$.

By comparing the terms of order 2 we get $P^T H P = H$. Moreover $\grpsym_{k+1}$ acts by orthogonal matrices thus $H$ commutes with this action. Thus $H$  is a morphism of representation from $\mu^\perp$ onto itself. Since $\mu^\perp$ is irreducible it follows from Schur's Lemma (\cite[Section 1.2]{FultonHarris}that $H$ is a (possibly complex) multiple of the identity. $H$ being symmetric all of its eigenvalues are real thus it is a real multiple of the identity.
\end{proof}
\end{prop}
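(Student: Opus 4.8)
The plan is to turn the $\grpsym_{k+1}$-invariance of $f$ into an intertwining property of its Hessian and then invoke Schur's lemma. First I would record the only structural fact needed about the representation: the all-ones vector, hence $\mu=\frac{1}{\sqrt{k+1}}(1,\dots,1)$, is fixed by the standard (permutation) action of $\grpsym_{k+1}$ on $\Real^{k+1}$, so $\Real^{k+1}=\Real\mu\oplus\mu^\perp$ is a decomposition into $\grpsym_{k+1}$-subrepresentations and the piece $\mu^\perp$ is the standard representation, which is irreducible over $\Real$. I would either cite this or verify it by a short argument: any nonzero proper $\grpsym_{k+1}$-invariant subspace of $\mu^\perp$ would, together with $\Real\mu$, give a proper nonzero invariant subspace of $\Real^{k+1}$ different from $\Real\mu$, and an elementary check on coordinate differences $e_i-e_j$ shows no such subspace exists.

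Next I would put $\Diff^2_0 f$ in matrix form, $\Diff^2_0 f(x,x)=x^\top H x$ with $H=H^\top$ a symmetric operator on $\mu^\perp$, and exploit invariance. Since $f$ is $C^2$ its second-order Taylor expansion at $0$ is unique; comparing the expansions of $x\mapsto f(x)$ and $x\mapsto f(\sigma x)$ for a permutation matrix $\sigma$ and using $f(\sigma x)=f(x)$ forces $\sigma^\top H\sigma=H$ for every $\sigma\in\grpsym_{k+1}$. Permutation matrices are orthogonal, so $\sigma^{-1}H\sigma=H$: the Hessian $H$ is an endomorphism of the representation $\mu^\perp$ commuting with the $\grpsym_{k+1}$-action.

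Finally I would invoke Schur's lemma: because $\mu^\perp$ is irreducible, any endomorphism commuting with the action is a scalar multiple of the identity. The only point requiring a little care is that the real form of Schur's lemma yields a genuine real scalar only under an extra hypothesis (self-adjointness of the intertwiner, or absolute irreducibility of the representation); here $H$ is real symmetric, hence orthogonally diagonalizable with real eigenvalues, and each eigenspace is $\grpsym_{k+1}$-invariant since $H$ commutes with the action, so irreducibility leaves a single eigenspace and $H=C_f\,\mathbf{1}$ with $C_f\in\Real$. This gives $\Diff^2_0 f(x,x)=C_f\|x\|^2$ for all $x\in\mu^\perp$, as claimed. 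The main (and essentially only non-formal) ingredient is the irreducibility of the standard representation of $\grpsym_{k+1}$ over $\Real$; the rest of the argument is bookkeeping with Taylor coefficients.
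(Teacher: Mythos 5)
Your proposal is correct and follows essentially the same route as the paper: fix $\mu$, decompose $\Real^{k+1}=\Real\mu\oplus\mu^\perp$ with $\mu^\perp$ the irreducible standard representation, compare Taylor expansions of $f(x)$ and $f(Px)$ to get $P^TH P=H$, and conclude by Schur's lemma. Your extra care about the real form of Schur's lemma (via invariant eigenspaces of the symmetric matrix $H$) is a slightly cleaner version of the paper's closing remark that the eigenvalues of $H$ are real, but the argument is the same.
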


\begin{rk}
\label{rkCr}
By looking at~\eqref{defh} we see that $h$ is $\grpsym_{k+1}$-invariant, thus $r^2$ is invariant as well and the computation of its Hessian is reduced to the computation of the coefficient $C_{r^2}$ of Proposition \ref{Schur}.
\end{rk}

We write $h_i:=\frac{\partial h}{\partial x_i}|_{S^k}$. We recall that in~\cite{PSC} it is established that $\nabla h (\mu) =\mu$. Thus $\psi(\mu)=\mu$. 
\begin{rk}
\label{rkCpsi}
Furthermore $\psi=\pi\circ\nabla h$ is also $\grpsym_{k+1}$-invariant and (in exponential coordinates at $\mu$) is a function from $\mu^{\perp}$ onto itself. Mimicking the proof of Proposition~\ref{Schur}, we get that if the differential of $\psi$ at $\mu$ has at least one real eigenvalue $C_\psi$, then for all $x \in \mu^{\perp}$ we have $\Diff_\mu \psi \cdot x= C_\psi x$. 
\end{rk}

Before stating the next results, we write $h$ in a more convenient form to work with by change of variables (spherical coordinates) in~\eqref{defh}.
\begin{equation} \label{hspherical}
h(x)=\frac{2^k}{\pi^{(k+2)/2}} \Gamma\left(\frac{k+2}{2}\right)\int_{S^k_{pos}}\sqrt{\sum_{j=1}^{k+1}x_j^2\xi_j^2}\ dS^k(\xi)
\end{equation}
with $S^k_{pos}=S^k\cap \left\{x_i\geq0 \ \forall i=1,\ldots,k+1\right\}$ (note that $S^k_{pos}\neq S^k_+$).

\begin{df}For $m\in \mathbb{N}$ we denote by $G(m)$ the numbers:
\begin{equation*}
G(m):=\int_{S^k_{pos}}\xi_1^m dS^k(\xi).
\end{equation*} 
\end{df}
These numbers satisfy the following simple identities.
\begin{prop} \label{usid}
\begin{enumerate}
\item $G(m)= \frac{\pi^{k/2}}{2^k} \frac{\Gamma\left(\frac{m+1}{2}\right)}{\Gamma\left(\frac{k+m+1}{2}\right)}$
\item $\frac{G(4)}{G(2)}=\frac{3}{k+3} $
\item $\frac{G(6)}{G(2)}=\frac{15}{(k+3)(k+5)} $
\item $\frac{\int_{S^k_{pos}}\xi_1^2 \xi_2^2 dS^k(\xi)}{G(2)}=\frac{1}{k+3}$
\item $\frac{\int_{S^k_{pos}}\xi_1^4 \xi_2^2 dS^k(\xi)}{G(2)}=\frac{3}{(k+3)(k+5)}. $
\end{enumerate}
\end{prop}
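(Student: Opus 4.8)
The plan is to compute all five quantities by reducing them to a single family of Gaussian-type integrals and then manipulating ratios of Gamma functions. First I would establish item (1), the closed form for $G(m)$, and then deduce items (2)--(5) as elementary corollaries. The cleanest route for (1) is to pass from the spherical integral over $S^k_{pos}$ to a full Gaussian integral over the positive orthant of $\Real^{k+1}$: writing $\xi = \rho\omega$ with $\rho\geq 0$ and $\omega\in S^k_{pos}$, we have
\begin{equation*}
\int_{\Real^{k+1}_{\geq 0}} \xi_1^m e^{-\|\xi\|^2/2}\,d\xi = \left(\int_0^\infty \rho^{m+k}e^{-\rho^2/2}\,d\rho\right)\cdot G(m).
\end{equation*}
The left-hand side factors as a product of one-dimensional integrals: $\int_0^\infty t^m e^{-t^2/2}\,dt$ in the first coordinate and $\int_0^\infty e^{-t^2/2}\,dt = \sqrt{\pi/2}$ in each of the remaining $k$ coordinates. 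Using $\int_0^\infty t^s e^{-t^2/2}\,dt = 2^{(s-1)/2}\Gamma\!\left(\frac{s+1}{2}\right)$, both sides become explicit products of Gamma functions and powers of $2$, and solving for $G(m)$ yields
\begin{equation*}
G(m) = \frac{\pi^{k/2}}{2^k}\,\frac{\Gamma\!\left(\frac{m+1}{2}\right)}{\Gamma\!\left(\frac{k+m+1}{2}\right)},
\end{equation*}
which is item (1).

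Next, items (2) and (3) follow immediately by substituting $m=4$, $m=6$, $m=2$ into the formula for $G(m)$ and simplifying the resulting ratios using the functional equation $\Gamma(x+1)=x\Gamma(x)$: for (2) one gets $G(4)/G(2) = \Gamma(5/2)\Gamma((k+3)/2)/(\Gamma(3/2)\Gamma((k+5)/2)) = (3/2)\big/((k+3)/2) = 3/(k+3)$, and similarly $G(6)/G(2) = (15/4)\big/\big(\frac{k+3}{2}\cdot\frac{k+5}{2}\big) = 15/((k+3)(k+5))$ for (3).

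For items (4) and (5), which involve two coordinates, I would use the symmetry of $S^k_{pos}$ under permuting coordinates together with the identity $\sum_{i=1}^{k+1}\xi_i^2 = 1$ on the sphere. Multiplying $1 = \sum_i \xi_i^2$ by $\xi_1^2$ and integrating gives $G(4) + k\int_{S^k_{pos}}\xi_1^2\xi_2^2\,dS^k = G(2)$, so $\int \xi_1^2\xi_2^2\,dS^k / G(2) = (1 - G(4)/G(2))/k = (1 - 3/(k+3))/k = 1/(k+3)$, proving (4). For (5), multiply $1 = \sum_i \xi_i^2$ by $\xi_1^4$: this gives $G(6) + k\int\xi_1^4\xi_2^2\,dS^k = G(4)$, hence $\int\xi_1^4\xi_2^2\,dS^k/G(2) = (G(4)/G(2) - G(6)/G(2))/k = \left(\frac{3}{k+3} - \frac{15}{(k+3)(k+5)}\right)/k = \frac{3(k+5)-15}{k(k+3)(k+5)} = \frac{3}{(k+3)(k+5)}$, proving (5). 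I do not expect any serious obstacle here; the only mild subtlety is being careful that in (4) and (5) the cross terms $\xi_1^2\xi_i^2$ (resp. $\xi_1^4\xi_i^2$) for $i=2,\dots,k+1$ are all equal by the coordinate-permutation symmetry of $S^k_{pos}$, so that there are exactly $k$ of them — this is what produces the factor $k$ in the denominators before it cancels.
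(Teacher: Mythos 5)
Your proposal is correct. For item (1) you use exactly the paper's argument: compute $\int_{\Real^{k+1}_{pos}}\xi_1^m e^{-\|\xi\|^2/2}\,d\xi$ once by polar coordinates (producing $2^{\frac{m+k-1}{2}}\Gamma\!\left(\frac{m+k+1}{2}\right)G(m)$) and once by Fubini as a product of one-dimensional integrals $\int_0^\infty t^s e^{-t^2/2}\,dt = 2^{\frac{s-1}{2}}\Gamma\!\left(\frac{s+1}{2}\right)$, then equate; items (2)--(3) then follow by the functional equation of $\Gamma$, again as in the paper. The only place you diverge is items (4)--(5): the paper's "done in a similar way" means computing the mixed moments $\int_{S^k_{pos}}\xi_1^2\xi_2^2\,dS^k$ and $\int_{S^k_{pos}}\xi_1^4\xi_2^2\,dS^k$ by the same Gaussian-factorization trick (the corresponding orthant integral splits as a product of two nontrivial one-dimensional factors and $k-1$ copies of $\sqrt{\pi/2}$), whereas you deduce them from $G(2),G(4),G(6)$ by multiplying by $1=\sum_i\xi_i^2$ on the sphere and using permutation symmetry of $S^k_{pos}$, which correctly gives $G(4)+k\int\xi_1^2\xi_2^2=G(2)$ and $G(6)+k\int\xi_1^4\xi_2^2=G(4)$. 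Both routes are short and rigorous; yours has the small advantage of needing no further Gaussian computation once (1)--(3) are known, while the paper's is more uniform in that every item is an instance of the same moment formula. No gaps.
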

\begin{proof}Observe first that for any $p>0$
\begin{equation} \label{eq:OtherGamma}
\int_0^{+\infty} t^p e^{-\frac{t^2}{2}} dt =2^{\frac{p-1}{2}} \Gamma\left(\frac{p+1}{2} \right).
\end{equation}
(Use the change of variable $u=\frac{t^2}{2}$ and the definition of the Gamma function).
We prove the first two points, all the other points are done in a similar way.
\begin{enumerate}
\item Using a polar change of variables and Equation~\eqref{eq:OtherGamma} we have
\begin{equation*}
\int_{\Real^{k+1}_{pos}}\xi_1^m e^{-\frac{||\xi||^2}{2}} d\xi = \Gamma\left(\frac{k+m+1}{2}\right)2^{\frac{k+m-1}{2}} G(m).
\end{equation*}
where $\Real^{k+1}_{pos}=\left\{x_i\geq0 \ \forall i=1,\ldots,k+1\right\} $ is the positive orthant. In an other hand using Fubini
\begin{equation*}
\int_{\Real^{k+1}_{pos}}\xi_1^m e^{-\frac{||\xi||^2}{2}} d\xi =  \left(\int_0^{+\infty}e^{-\frac{t^2}{2}}\right)^k  \left(\int_0^{+\infty}t^m e^{-\frac{t^2}{2}}\right)= \left( \frac{\pi}{2}\right)^{\frac{k}{2}} \ 2^{\frac{m-1}{2}} \Gamma\left(\frac{m+1}{2} \right).
\end{equation*}
Equaling the right hand side of this two equalities gives us the result.

\item Using 1. we have $\frac{G(4)}{G(2)}=  \frac{\Gamma\left(\frac{5}{2}\right)}{\Gamma\left(\frac{3}{2}\right)} \frac{\Gamma\left(\frac{k+3}{2}\right)}{\Gamma\left(\frac{k+5}{2}\right)}$.
But $\Gamma\left(\frac{k+5}{2}\right)=\frac{k+3}{2}\Gamma\left(\frac{k+3}{2}\right)$ and $\frac{\Gamma\left(\frac{5}{2}\right)}{\Gamma\left(\frac{3}{2}\right)}=3/2$. That gives us the result.
\end{enumerate} 
 \end{proof}
 
\begin{rk}
Observe that the coefficient in front of the integral in~\eqref{hspherical} is  $\frac{R}{\sqrt{k+1}}\frac{1}{G(2)}$.
\end{rk}

\begin{prop}
\label{propCpsi}
$\Diff_\mu\psi$ admits a real eigenvalue $C_\psi=\frac{k+1}{k+3}$. Thus (by Remark~\ref{rkCpsi}) it is a non zero multiple of the identity. In particular $\psi$ is  a local diffeomorphism near $\mu$.
\begin{proof}
First of all, in~\eqref{hspherical} we integrate a bounded function over a compact domain, thus in computing $h_i$ we can interchange integral and derivative. We get
\begin{equation}
h_i(x)=\frac{R}{\sqrt{k+1}}\frac{1}{G(2)}\int_{S^k_{pos}}\frac{x_i\ \xi_i^2}{\sqrt{\sum_{j=1}^{k+1}x_j^2\xi_j^2}}\ dS^k(\xi).
\end{equation}
Let $\gamma$ be the geodesic on the sphere starting at $\mu$ with initial velocity $\dot{\gamma}_0=\frac{\sqrt{k}}{\sqrt{k+1}}(1,-\frac{1}{k},\ldots,-\frac{1}{k})$, i.e.
\begin{equation}
\begin{split}
\label{defgamma}
\gamma: \Real	&\to S^k \\
		t	&\mapsto\frac{1}{\sqrt{k+1}}\left(\cos t +\sqrt{k}\sin t, \cos t -\frac{\sin t}{\sqrt{k}},\ldots, \cos t -\frac{\sin t}{\sqrt{k}}\right).
\end{split}
\end{equation}
Along this particular geodesic, 
\begin{equation}
h_1(\gamma(t))=\frac{R}{G(2)\sqrt{k+1}}\int_{S^k_{pos}}\frac{(\cos t +\sqrt{k}\sin t)\ \xi_1^2}{\sqrt{\xi_1^2(\cos t +\sqrt{k}\sin t)^2+(1-\xi_1^2)(\cos t -\frac{\sin t}{\sqrt{k}})^2}}\ dS^k(\xi),
\end{equation}
which we can expand as:
\begin{align}
h_1(\gamma(t))&=\frac{R}{G(2)\sqrt{k+1}}\int_{S^k_{pos}}\left[ \xi_1^2+t \ \frac{k+1}{\sqrt{k}}(\xi_1^2-\xi_1^4)\right] dS^k(\xi)+\mathcal{O}(t^2) \\
                &=\frac{R}{G(2)\sqrt{k+1}}\left[ G(2)+t \ \frac{k+1}{\sqrt{k}}\left(G(2)-G(4)\right)\right] +\mathcal{O}(t^2)
\label{higtint}
\end{align}
Using Proposition~\ref{usid}, we find
\begin{equation}
h_1(\gamma(t))=\frac{R}{\sqrt{k+1}}\left[ 1+t \ \sqrt{k} \ \frac{k+1}{k+3}\right] +\mathcal{O}(t^2).
\label{h1gt}
\end{equation}
Similarly we find for $j\geq2$
\begin{equation}
h_j(\gamma(t))=\frac{R}{\sqrt{k+1}}\left[ 1-t \ \frac{1}{\sqrt{k}} \ \frac{k+1}{k+3}\right] +\mathcal{O}(t^2).
\label{h2gt}
\end{equation}
Moreover 
\begin{equation}
\psi_i (\gamma(t))=\frac{h_i\left(\gamma(t)\right)}{\sqrt{\left(h_1(\gamma(t)) \right)^2+k\left(h_2(\gamma(t)) \right)^2}}.
\end{equation}
Taking once again the first order Taylor polynomial in $t$ we find 
\begin{align*}
\psi_1(\gamma(t))&=\frac{1}{\sqrt{k}}+t\ \frac{\sqrt{k}}{\sqrt{k+1}} \cdot \frac{k+1}{k+3}+\mathcal{O}(t^2) \\
\psi_2(\gamma(t))&=\frac{1}{\sqrt{k}}-t\ \frac{1}{\sqrt{k}\sqrt{k+1}} \cdot \frac{k+1}{k+3}+\mathcal{O}(t^2).
\end{align*}
Recalling that $\Diff_\mu \psi \cdot \dot{\gamma}_0=\frac{\diff}{\diff t}|_{t=0}\psi(\gamma(t))$ we find that $\dot{\gamma}_0$ is an eigenvector with eigenvalue $\frac{k+1}{k+3}$.
\end{proof}
\end{prop}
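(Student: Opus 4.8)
The plan is to use the $\grpsym_{k+1}$-equivariance of $\psi=\pi\circ\nabla h$ to reduce the whole computation to a single directional derivative, in the spirit of Remark~\ref{rkCpsi}. In exponential coordinates at $\mu$, $\Diff_\mu\psi$ is an endomorphism of $\mu^\perp$ commuting with the irreducible $\grpsym_{k+1}$-action there, so by Schur's lemma (as used for Proposition~\ref{Schur}) exhibiting one nonzero real eigenvalue $C_\psi$ of $\Diff_\mu\psi$ immediately gives $\Diff_\mu\psi=C_\psi\cdot\mathbf{1}$; this is a linear isomorphism, so $\psi$ (which fixes $\mu$ because $\nabla h(\mu)=\mu$) is then a local diffeomorphism near $\mu$. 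Hence everything reduces to computing $\frac{d}{dt}|_{t=0}\psi(\gamma(t))$ for one cleverly chosen curve $\gamma$.

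First I would take the tangent vector $\dot\gamma_0\propto(1,-\tfrac{1}{k},\dots,-\tfrac{1}{k})\in\mu^\perp$ and let $\gamma$ be the unit-speed geodesic of $S^k$ it generates. The point of this choice is that $\gamma(t)$ is fixed by the subgroup $\grpsym_k\le\grpsym_{k+1}$ permuting the last $k$ coordinates, so the partial derivatives $h_i(\gamma(t))$ take only two values, namely $h_1(\gamma(t))$ and the common value $h_j(\gamma(t))$ for $j\ge2$; this keeps the normalization defining $\psi$ manageable.

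Next I would differentiate the integral representation~\eqref{hspherical} of $h$ under the integral sign — legitimate since one integrates a bounded function over the compact set $S^k_{pos}$ — obtaining $h_i(x)=\tfrac{R}{\sqrt{k+1}}\tfrac{1}{G(2)}\int_{S^k_{pos}}\tfrac{x_i\xi_i^2}{\sqrt{\sum_j x_j^2\xi_j^2}}\,dS^k(\xi)$, then substitute $x=\gamma(t)$ and Taylor-expand the integrand to first order in $t$. The coefficients appearing are the moments $G(2)$ and $G(4)$, and the identity $G(4)/G(2)=3/(k+3)$ from Proposition~\ref{usid} collapses the expansions to $h_1(\gamma(t))=\tfrac{R}{\sqrt{k+1}}\bigl(1+t\sqrt{k}\,\tfrac{k+1}{k+3}\bigr)+\mathcal{O}(t^2)$ and $h_j(\gamma(t))=\tfrac{R}{\sqrt{k+1}}\bigl(1-\tfrac{t}{\sqrt{k}}\,\tfrac{k+1}{k+3}\bigr)+\mathcal{O}(t^2)$ for $j\ge2$. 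Feeding these into $\psi_i(\gamma(t))=h_i(\gamma(t))/\sqrt{h_1(\gamma(t))^2+k\,h_2(\gamma(t))^2}$, the overall factor $R/\sqrt{k+1}$ cancels in the normalization; expanding once more to first order yields $\frac{d}{dt}|_{t=0}\psi(\gamma(t))=\tfrac{k+1}{k+3}\,\dot\gamma_0$. Therefore $C_\psi=\tfrac{k+1}{k+3}\ne0$, and the reduction in the first paragraph concludes the proof.

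I expect the main obstacle to be computational bookkeeping rather than anything conceptual: one must carry the first-order expansion of $\sqrt{\xi_1^2(\cos t+\sqrt{k}\sin t)^2+(1-\xi_1^2)(\cos t-\tfrac{\sin t}{\sqrt{k}})^2}$ through the integral, and then through the second normalization inside $\psi$, while correctly discarding the $\mathcal{O}(t^2)$ terms and invoking the right moment identities. The only point needing genuine care is the justification — already present in Proposition~\ref{Schur} and Remark~\ref{rkCpsi} — that the $\grpsym_{k+1}$-equivariance of $\psi$ in exponential coordinates, together with irreducibility of $\mu^\perp$, upgrades a single real eigenvalue to a scalar endomorphism.
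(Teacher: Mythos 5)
Your proposal is correct and follows essentially the same route as the paper: differentiate \eqref{hspherical} under the integral sign, expand $h_1$ and $h_j$ ($j\geq 2$) to first order along the geodesic generated by $\dot{\gamma}_0\propto(1,-\tfrac{1}{k},\dots,-\tfrac{1}{k})$ using the moment identity $G(4)/G(2)=3/(k+3)$, normalize to get $\psi$, and read off the eigenvalue $\tfrac{k+1}{k+3}$, with the Schur-lemma/equivariance reduction of Remark~\ref{rkCpsi} upgrading this to a scalar multiple of the identity. Your explicit observation that $\gamma(t)$ is fixed by the $\grpsym_k$ permuting the last $k$ coordinates is a nice articulation of why only two values $h_1,h_2$ appear, which the paper uses implicitly.
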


\begin{rk}
By looking at~\eqref{h1gt}  and~\eqref{h2gt} we note that 
$\Diff_\mu (\nabla h)\cdot\dot{\gamma}_0=\sum_{i=1}^{k+1}\Diff_\mu h_i\cdot\dot{\gamma}_0=0.$
Thus $\dot{\gamma}_0$ is an eigenvector for $\Diff_\mu (\nabla h)$ of eigenvalue  $0$ and by the same argument as in remark~\ref{rkCpsi}, $\Diff_\mu (\nabla h)=0$, i.e. $\mu$ is a critical point of $\nabla h$.
\label{rkgradh}
\end{rk}
\begin{prop} We have: 
\label{propD2r2}
\begin{equation*}
\Diff^2_\mu (r^2) (x,x)= \frac{2}{C_\psi^2}\left(\sum_{i=1}^{k+1}\left(\Diff_\mu h_i\cdot x\right)^2 + \frac{R}{\sqrt{k+1}}\sum_{i=1}^{k+1}\Diff^2_\mu h_i (x,x)   \right) \ \forall x \in \mu^{\perp}.
\end{equation*}
\begin{proof}
In Equation~\eqref{devcomp2} take $f=h_i$ and $g=\psi^{-1}$. Sum over $i\in\{1,\ldots,k+1\}$ and use Remark \ref{rkgradh}.
\end{proof}
\end{prop}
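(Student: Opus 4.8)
The plan is to expand $r^2$ near $\mu$ as a sum of squares and differentiate it term by term by means of the second-order chain rule~\eqref{devcomp2}. Throughout, all functions are read in exponential coordinates at $\mu$, so that $\mu$ becomes the origin of $\mu^\perp\cong\Real^{k}$ and $h_i=\frac{\partial h}{\partial x_i}|_{S^k}$, $\psi$, $\psi^{-1}$ and $r$ are treated as maps on a neighborhood of $0$ in $\mu^\perp$. Since $\psi$ is a local diffeomorphism near $\mu$ by Proposition~\ref{propCpsi}, the identity~\eqref{rbyh} makes sense and, taking Euclidean norms componentwise in $\Real^{k+1}$, it reads
\[
r(x)^2=\big\|\nabla h\big(\psi^{-1}(x)\big)\big\|^2=\sum_{i=1}^{k+1}\big(h_i\circ\psi^{-1}\big)(x)^2
\]
in a neighborhood of $0$.

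First I would assemble the ingredients needed to apply~\eqref{devcomp2} with $f=h_i$, $g=\psi^{-1}$ and $a=\mu$. One has $\psi(\mu)=\mu$ (recalled before Remark~\ref{rkCpsi}), hence $\psi^{-1}(0)=0$; the explicit integral formula for $h_i$ in the proof of Proposition~\ref{propCpsi} gives $h_i(\mu)=R/\sqrt{k+1}$ for every $i$, so $f(g(a))=R/\sqrt{k+1}$; and by Proposition~\ref{propCpsi}, $\Diff_\mu\psi=C_\psi\mathbf{1}$ with $C_\psi=\frac{k+1}{k+3}\neq 0$, so the inverse function theorem gives $\Diff_a g=\Diff_\mu\psi^{-1}=C_\psi^{-1}\mathbf{1}$. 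Substituting these, together with $\Diff_{g(a)}f=\Diff_\mu h_i$ and $\Diff^2_{g(a)}f=\Diff^2_\mu h_i$, into~\eqref{devcomp2} yields, for each $i$,
\[
\Diff^2_\mu\big((h_i\circ\psi^{-1})^2\big)(x,x)=\frac{2R}{\sqrt{k+1}}\left[\Diff_\mu h_i\cdot\Diff^2_\mu\psi^{-1}(x,x)+\frac{1}{C_\psi^2}\Diff^2_\mu h_i(x,x)\right]+\frac{2}{C_\psi^2}\big(\Diff_\mu h_i\cdot x\big)^2 .
\]

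Then I would sum over $i=1,\dots,k+1$. The only contribution involving the second derivative $\Diff^2_\mu\psi^{-1}$ — which we never want to compute — is $\frac{2R}{\sqrt{k+1}}\big(\sum_{i}\Diff_\mu h_i\big)\cdot\Diff^2_\mu\psi^{-1}(x,x)$, and by Remark~\ref{rkgradh} we have $\sum_{i=1}^{k+1}\Diff_\mu h_i=\Diff_\mu(\nabla h)=0$, so this term drops out; what remains is precisely the asserted formula. I do not expect a genuine obstacle: the argument is a short computation whose only subtle point is exactly this cancellation, and the two facts it rests on — that $\psi$ is invertible near $\mu$ with scalar differential, and that $\sum_i\Diff_\mu h_i=0$ — are already in place (Proposition~\ref{propCpsi} and Remark~\ref{rkgradh}). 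The remaining care is purely bookkeeping: keeping $h_i$, $\psi$, $\psi^{-1}$ and $r$ consistently in exponential coordinates on $\mu^\perp$, and tracking the chain-rule terms of~\eqref{devcomp2}.
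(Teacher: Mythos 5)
Your proposal is correct and is essentially the paper's own proof: the authors' (very terse) argument is precisely to apply \eqref{devcomp2} with $f=h_i$, $g=\psi^{-1}$, sum over $i$, and invoke Remark~\ref{rkgradh} so that the $\Diff^2_\mu\psi^{-1}$ term cancels. You have merely made the bookkeeping explicit (namely $h_i(\mu)=R/\sqrt{k+1}$, $\psi^{-1}(\mu)=\mu$, and $\Diff_\mu\psi^{-1}=C_\psi^{-1}\mathbf{1}$ from Proposition~\ref{propCpsi} via the inverse function theorem), which is exactly what the paper leaves to the reader.
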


We are now finally ready to compute the Hessian of $r^2$.

\begin{prop}\label{prop:Hessr}
For all $x\in\mu^{\perp}$ we have
\begin{equation*}
\Diff^2_\mu (r^2) (x,x)= -4 \frac{R^2}{k+1} \ ||x||^2.
\end{equation*}
\begin{proof}
Once again we use $\gamma$ the geodesic defined by~\eqref{defgamma}, and we compute $\Diff^2_\mu (r^2) (\dot{\gamma}_0,\dot{\gamma}_0)$ using Proposition~\ref{propD2r2}.
With the help of~\eqref{h1gt}  and~\eqref{h2gt}, we get:
\begin{equation}
\sum_{i=1}^{k+1}\left(\Diff_\mu h_i\cdot \dot{\gamma}_0\right)^2=\left(\Diff_\mu h_1\cdot \dot{\gamma}_0\right)^2+k\left(\Diff_\mu h_2\cdot \dot{\gamma}_0\right)^2=R^2 \frac{(k+1)^2}{(k+3)^2}.
\end{equation}
To compute the second derivative of $h_i$ we need to take the Taylor series of equation~\eqref{higtint} up to order 2 this time. The term of order 2 for $h_1$ is $\frac{t^2}{2}\frac{k+1}{k}\left[2\xi_1^2-\xi_1^4(3k+5)+\xi_1^6 3(k+1)\right]$  which once integrated and using the useful relations, gives $\Diff^2_\mu h_1(\dot{\gamma}_0,\dot{\gamma}_0)=-\frac{R}{\sqrt{k+1}}\frac{(k+1)(7k-1)}{(k+3)(k+5)}$.

Similarly, one gets $\Diff^2_\mu h_2(\dot{\gamma}_0,\dot{\gamma}_0)=\frac{R}{\sqrt{k+1}}\frac{(k+1)}{k}\left(-1+9\frac{(k+1)}{(k+3)(k+5)}\right)$. We combine those to obtain
\begin{equation}
\sum_{i=1}^{k+1}\Diff^2_\mu h_i (\dot{\gamma}_0,\dot{\gamma}_0)=\Diff^2_\mu h_1 (\dot{\gamma}_0,\dot{\gamma}_0)+k\Diff^2_\mu h_2 (\dot{\gamma}_0,\dot{\gamma}_0)=-\frac{R}{\sqrt{k+1}} \frac{(k+1)^2}{(k+3)}.
\end{equation}
The result  follows from Proposition~\ref{propD2r2}, Remark~\ref{rkCr} and the fact that $||\dot{\gamma}_0||=1$. 
\end{proof}
\end{prop}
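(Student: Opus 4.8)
The plan is to combine the symmetry reduction already set up with the Taylor data along the geodesic $\gamma$ of~\eqref{defgamma}. Since $h$ is $\grpsym_{k+1}$-invariant by~\eqref{defh}, so is $r^2$, and by Remark~\ref{rkCr} together with Proposition~\ref{Schur} the Hessian $\Diff^2_\mu(r^2)$ is a scalar multiple $C_{r^2}\,\|\cdot\|^2$ of the identity; hence it suffices to evaluate it on the single unit vector $\dot\gamma_0=\frac{\sqrt k}{\sqrt{k+1}}(1,-\frac1k,\dots,-\frac1k)\in\mu^\perp$ (one checks $\|\dot\gamma_0\|=1$) and read off $C_{r^2}$. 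As $\gamma$ is a unit-speed geodesic through $\mu$, in exponential coordinates at $\mu$ it is the straight line $t\mapsto t\,\dot\gamma_0$, so $\Diff^2_\mu(r^2)(\dot\gamma_0,\dot\gamma_0)=\frac{d^2}{dt^2}\big|_{t=0}\,r^2(\gamma(t))$, and likewise for each $h_i$.

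I would then plug this into Proposition~\ref{propD2r2}, using the value $C_\psi=\frac{k+1}{k+3}$ from Proposition~\ref{propCpsi}:
\begin{equation*}
\Diff^2_\mu(r^2)(\dot\gamma_0,\dot\gamma_0)=\frac{2}{C_\psi^2}\Bigl(\sum_{i=1}^{k+1}\bigl(\Diff_\mu h_i\cdot\dot\gamma_0\bigr)^2+\frac{R}{\sqrt{k+1}}\sum_{i=1}^{k+1}\Diff^2_\mu h_i(\dot\gamma_0,\dot\gamma_0)\Bigr).
\end{equation*}
The first-order terms are already available from~\eqref{h1gt} and~\eqref{h2gt}: $\Diff_\mu h_1\cdot\dot\gamma_0=\frac{R\sqrt k}{\sqrt{k+1}}\cdot\frac{k+1}{k+3}$ and $\Diff_\mu h_j\cdot\dot\gamma_0=-\frac{R}{\sqrt{k(k+1)}}\cdot\frac{k+1}{k+3}$ for $j\ge2$, so $\sum_i(\Diff_\mu h_i\cdot\dot\gamma_0)^2=R^2\frac{(k+1)^2}{(k+3)^2}$. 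The real work is the second-order data: I would carry the expansion of~\eqref{higtint} one order further, Taylor-expanding the integrand $a(t)\,\xi_1^2\big/\sqrt{\xi_1^2a(t)^2+(1-\xi_1^2)b(t)^2}$ (with $a(t)=\cos t+\sqrt k\sin t$, $b(t)=\cos t-\frac{\sin t}{\sqrt k}$) to order $t^2$, integrating over $S^k_{pos}$, and evaluating the resulting moments with the identities of Proposition~\ref{usid} ($G(4)/G(2)=3/(k+3)$, $G(6)/G(2)=15/((k+3)(k+5))$, and the two mixed ones). This should give $\Diff^2_\mu h_1(\dot\gamma_0,\dot\gamma_0)=-\frac{R}{\sqrt{k+1}}\cdot\frac{(k+1)(7k-1)}{(k+3)(k+5)}$ and, for $j\ge2$, $\Diff^2_\mu h_j(\dot\gamma_0,\dot\gamma_0)=\frac{R}{\sqrt{k+1}}\cdot\frac{k+1}{k}\bigl(-1+\frac{9(k+1)}{(k+3)(k+5)}\bigr)$; summing, $\sum_i\Diff^2_\mu h_i=\Diff^2_\mu h_1+k\,\Diff^2_\mu h_2=-\frac{R}{\sqrt{k+1}}\cdot\frac{(k+1)^2}{k+3}$, the $(k+5)$ in the denominators having cancelled.

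Assembling, the bracket becomes $R^2\frac{(k+1)^2}{(k+3)^2}-R^2\frac{k+1}{k+3}=-\frac{2R^2(k+1)}{(k+3)^2}$, and multiplying by $\frac{2}{C_\psi^2}=\frac{2(k+3)^2}{(k+1)^2}$ gives $\Diff^2_\mu(r^2)(\dot\gamma_0,\dot\gamma_0)=-\frac{4R^2}{k+1}$; since $\|\dot\gamma_0\|=1$, this is exactly $C_{r^2}$ and the proposition follows. The one genuinely delicate point is the second-order computation in the previous paragraph: keeping the $O(t^2)$ term of the square-root denominator correct and verifying the algebraic cancellations that collapse the moment combinations to the clean factors $-k(7k-1)$ and then $-(k+1)^2/(k+3)$. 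This is lengthy but mechanical; everything else is immediate from the propositions already established.
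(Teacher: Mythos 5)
Your proposal follows the paper's own proof essentially verbatim: the same Schur-lemma reduction to the single direction $\dot\gamma_0$, the same use of Proposition~\ref{propD2r2} with $C_\psi=\frac{k+1}{k+3}$, the same first-order data from~\eqref{h1gt}--\eqref{h2gt}, and the same order-$t^2$ expansion of~\eqref{higtint} evaluated via the moment identities of Proposition~\ref{usid}, with the final assembly $\frac{2(k+3)^2}{(k+1)^2}\bigl(R^2\frac{(k+1)^2}{(k+3)^2}-R^2\frac{k+1}{k+3}\bigr)=-\frac{4R^2}{k+1}$ matching the paper exactly. The only caveat is that, like the paper, you state rather than fully display the second-order moment computation, but the values you quote are the correct ones.
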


\begin{rk}
The Hessian of the various intermediate functions such as the $h_i$'s depend on the choice of local coordinates and make sense only if we consider them as functions on $\mu^{\perp}$. However $\mu$ being a critical point of $r^2$ its Hessian at this point is well defined and once computed does not depend on the choice of local coordinates.
\end{rk}

All the work is now almost done. We write~\eqref{Ikn}  in Riemannian polar coordinates:
\begin{equation}
I_k(n)=\int_{\tilde{S}^{k-1}_+}\int_0^{l(v)} e^{-(n-k)\left(-\frac{k+1}{2}\log(r^2)-\log(p)\right)}q \sqrt{\det g} \ \rho^{k-1}\ d\rho \ dS^{k-1}(v)
\end{equation}
where $g$ is the spherical metric of $S^k$ on $\mu^\perp$ the angular domain $\tilde{S}^{k-1}_+:=\pi\circ\exp_\mu^{-1}(S^k_+)=\mu^\perp\cap S^k \cap F_k$ and $l(v)$ is the time to reach the boundary of the domain $S^k_+$ starting at $\mu$ with velocity $v$.

In order to apply Theorem~\ref{Laplacesmethod} we need to take the Taylor series of the various functions appearing in the integrand. A simple (but rather tedious) computation leads to:

\begin{multline}
I_k(n)=\frac{2^{K}}{(k+1)^{\frac{K-(k+1)^2}{2}}}\int_{\tilde{S}^{k-1}_+}\prod_{i<j}|x_i-x_j| \\
	\int_0^{l(v)} e^{-(n-k)\left(-\frac{k+1}{2} \log\left(\frac{R^2}{k+1}\right)+\rho^2(k+2)+\mathcal{O}(\rho^3)\right)}(\rho^{K+k-1}+\mathcal{O}(\rho^{K+k}))\ d\rho \ dS^{k-1}
\end{multline}

where $K=\binom{k+1}{2}=\frac{k(k+1)}{2}$.

To apply Laplace's method, let us first prove the following fact.
\begin{lemma} \label{upbound}
For all $v\in\tilde{S}^{k-1}_+$, $[0,\tan^{-1}(1/\sqrt{k})]\subset[0,l(v)]$.
\begin{proof}
$S^k_+$ is the (geodesically) convex hull on $S^k\subset\Real^{k+1}$ of the points $\alpha_1:=(1,0,\ldots,0)$, $\alpha_2:=\frac{1}{\sqrt{2}}(1,1,0,\ldots,0)$, $\ldots$ , $\alpha_{k+1}=\mu$. The closest of these points to $\mu$ (except $\mu$ itself of course) is $\alpha_k$. The cosine of the angle between them is given by their scalar product $\langle \alpha_k, \mu \rangle=\frac{\sqrt{k}}{\sqrt {k+1}} $. The result follows from the formula $\tan(\cos^{-1}(x))=\frac{\sqrt{1-x^2}}{x}$.
\end{proof}
\end{lemma}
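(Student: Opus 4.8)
The plan is to make the geodesic exit completely explicit rather than to argue abstractly. Recall (directly from the defining inequalities \eqref{Sk+}) that $S^k_+$ is the spherical simplex with vertices $\alpha_j=\tfrac{1}{\sqrt j}(\underbrace{1,\ldots,1}_{j},0,\ldots,0)$, $j=1,\ldots,k+1$, so that $\alpha_{k+1}=\mu$, whose facets are the $k$ walls $\{x_i=x_{i+1}\}$ ($i=1,\ldots,k$) together with the wall $\{x_{k+1}=0\}$. The point to exploit is that $\mu$ is itself a \emph{vertex}, lying on all the walls $\{x_i=x_{i+1}\}$ but not on $\{x_{k+1}=0\}$; hence a geodesic issued from $\mu$ into $S^k_+$ can only leave $S^k_+$ across the single facet $\{x_{k+1}=0\}$, so that $l(v)$ is the first positive time at which the last coordinate of the geodesic becomes negative.

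First I would parametrize, for $v\in\tilde{S}^{k-1}_+$, the geodesic $\gamma_v(t)=\cos t\,\mu+\sin t\,v$ and check that the order constraints are preserved for free: since all coordinates of $\mu$ coincide, $\gamma_v(t)_i-\gamma_v(t)_{i+1}=\sin t\,(v_i-v_{i+1})\ge 0$ for all $t\in[0,\pi]$, because $v\in F_k$. Thus $l(v)$ is the smallest $t>0$ with $\gamma_v(t)_{k+1}=\tfrac{\cos t}{\sqrt{k+1}}+\sin t\,v_{k+1}=0$ (and $l(v)\ge\pi/2$ already if $v_{k+1}\ge 0$), and the whole statement reduces to an upper bound on $-v_{k+1}$.

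Now $v\in\mu^\perp$ and $\|v\|=1$ give $\sum_{i=1}^k v_i=-v_{k+1}$ and $\sum_{i=1}^k v_i^2=1-v_{k+1}^2$, whence by Cauchy--Schwarz $1-v_{k+1}^2\ge\tfrac{1}{k}\big(\sum_{i=1}^k v_i\big)^2=\tfrac{1}{k}v_{k+1}^2$, i.e. $|v_{k+1}|\le\sqrt{k/(k+1)}$. Substituting this into $\cot l(v)=-\sqrt{k+1}\,v_{k+1}$ (the case $v_{k+1}<0$) gives $l(v)=\tan^{-1}\!\big(1/(\sqrt{k+1}\,|v_{k+1}|)\big)\ge\tan^{-1}(1/\sqrt k)$, since $\tan^{-1}$ is increasing, which is the claimed inclusion. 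The bound is sharp: equality in Cauchy--Schwarz forces $v$ proportional to $(1,\ldots,1,-k)$, which is exactly the unit tangent at $\mu$ to the geodesic hitting the vertex $\alpha_k$, of length $\cos^{-1}\langle\mu,\alpha_k\rangle=\cos^{-1}\!\big(\sqrt k/\sqrt{k+1}\big)=\tan^{-1}(1/\sqrt k)$ by the identity $\tan(\cos^{-1}x)=\sqrt{1-x^2}/x$.

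I do not expect a genuine obstacle here; the only thing requiring care is the geometric picture — recognizing that $\mu$ is a vertex of $S^k_+$, not an interior point, so that the uniform lower bound on $l(v)$ is controlled by exit through the one opposite facet $\{x_{k+1}=0\}$, equivalently by the distance from $\mu$ to the nearest other vertex $\alpha_k$, and that this minimal distance is actually attained along an admissible direction $v\in\tilde{S}^{k-1}_+$. One may instead phrase the argument purely via geodesic convexity of $S^k_+$ and the fact that each segment $[\mu,\alpha_j]$ lies in $S^k_+$, as in the statement, but turning that into the uniform estimate over all $v$ amounts to the computation above.
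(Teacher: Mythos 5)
Your proof is correct, and it is best described as a fully explicit version of the paper's argument rather than a literal reproduction of it. The paper argues synthetically: $S^k_+$ is the geodesic simplex with vertices $\alpha_1,\ldots,\alpha_{k+1}=\mu$, the nearest other vertex to $\mu$ is $\alpha_k$ at angular distance $\cos^{-1}\left(\sqrt{k}/\sqrt{k+1}\right)=\tan^{-1}(1/\sqrt{k})$, and the bound follows. Taken at face value that inference is not automatic: for a general (spherical or Euclidean) simplex the exit time from a vertex along an interior direction can be strictly smaller than the distance to the nearest other vertex, since the closest boundary point may lie in the relative interior of the opposite facet. Your computation supplies precisely the two facts that make the conclusion legitimate in this case: (i) because all coordinates of $\mu$ coincide and $v\in F_k$, the walls $\{x_i=x_{i+1}\}$ through $\mu$ satisfy $\gamma_v(t)_i-\gamma_v(t)_{i+1}=\sin t\,(v_i-v_{i+1})\ge 0$ for $t\in[0,\pi]$, so the geodesic can only leave $S^k_+$ through the facet $\{x_{k+1}=0\}$; and (ii) the Cauchy--Schwarz estimate $|v_{k+1}|\le\sqrt{k/(k+1)}$, which is exactly the statement that the point of that facet nearest to $\mu$ is the vertex $\alpha_k$, gives $l(v)=\tan^{-1}\left(1/(\sqrt{k+1}\,|v_{k+1}|)\right)\ge\tan^{-1}(1/\sqrt{k})$ when $v_{k+1}<0$, with the trivial bound $l(v)\ge\pi/2$ otherwise. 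So your route buys a self-contained verification (including sharpness along $v\propto(1,\ldots,1,-k)$, the direction toward $\alpha_k$), while the paper's route buys brevity at the cost of leaving the exit-facet analysis implicit.
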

 Thus the upper bound $l(v)$ doesn't really matter for the asymptotic. Moreover the outermost integral is the integral of a bounded function on a compact domain and we can interchange it with the limit. We apply Theorem~\ref{Laplacesmethod} with $\lambda=(n-k)$, $\mu=2$ and $\nu=K+k$. We find, using Proposition~\ref{old to new Lambda}:
\begin{equation}
I_k(n)=\frac{\Gamma\left(\frac{K+k}{2}\right)}{\Gamma\left(\frac{K+1}{2}\right)}\frac{2^{\frac{K-1}{2}}\Lambda_k}{(k+1)^{\frac{K-(k+1)^2}{2}}(k+2)^{\frac{K+k}{2}}} \left(\frac{R}{\sqrt{k+1}}\right)^{(n-k)(k+1)}\frac{1}{n^{\frac{K+k}{2}}}\left(1+\mathcal{O}((n-k)^{-\frac{K+k+1}{2}})\right)
\label{Iknasymp}
\end{equation}

We are now (finally) ready to state the main theorem of this section.

\begin{thm}
For every fixed integer $k>0$ and as $n$ goes to infinity, we have
\begin{equation}
\delta_{k,n}=a_k \cdot \left(b_k\right)^n\cdot n^{-\frac{k(k+1)}{4}}\left(1+\mathcal{O}(n^{-1})\right)
\label{dknasymp}
\end{equation}
where
\begin{align}
a_k &=\Lambda_k \  \frac{2^{\frac{k(k-3)}{4}}}{\pi^{\frac{k(k+2)}{2}}}\sqrt{k+1}\ \left( \frac{k+1}{k+2}\right)^{\frac{k(k+3)}{4}}\left(\frac{\Gamma\left(\frac{k+1}{2} \right)}{\Gamma\left(\frac{k+2}{2} \right)}\right)^{k(k+1)}         \label{defak} \\
b_k&=\left( \frac{\Gamma\left(\frac{k+2}{2} \right)}{\Gamma\left(\frac{k+1}{2} \right)}\sqrt{\pi}\right)^{(k+1)}.
\end{align}
\begin{proof}
We use \eqref{eq1} and \eqref{eq2}. We need to compute the asymptotic of 
\begin{equation}
\beta_{k,n}:=\frac{\pi^{(k+1)(n-k)+(k+1)}}{2^{(k+1)(n-k-1)}} \frac{\Gamma\left((k+1)(n-k)\right)}{\Gamma(\frac{n+1}{2})\Gamma(\frac{n}{2})\cdots\Gamma(\frac{n-2k}{2})}.
\end{equation}
We use the multiplication theorem to write:
\begin{equation*}
\Gamma\left((k+1)(n-k)\right)=\frac{(k+1)^{(k+1)(n-k)-1/2}}{(2\pi)^{k/2}}\prod_{l=0}^k \Gamma\left(n-k+\frac{l}{k+1} \right)
\end{equation*}
and the denominator
\begin{equation*}
\prod_{l=0}^k\Gamma\left(\frac{n+1-2l}{2} \right)\Gamma\left(\frac{n-2l}{2} \right)=\frac{\pi^\frac{k+1}{2}}{2^{(k+1)(n-k-1)}}\prod_{l=0}^k\Gamma\left(n-2l \right).
\end{equation*}
Moreover using~\eqref{Frac Gam asymptote} we have:
\begin{equation*}
\prod_{l=0}^k \frac{\Gamma\left(n-k+\frac{l}{k+1} \right)}{\Gamma\left(n-2l \right)}=\prod_{l=0}^k n^{-k+\frac{l}{k+1}+2l}\ \left(1+\mathcal{O}(n^{-1}) \right)= n^{k/2} \ \left(1+\mathcal{O}(n^{-1}) \right).
\end{equation*}
Thus
\begin{equation}
\beta_{k,n}= \frac{ \left(\pi (k+1) \right)^{ (k+1)(n-k) }}{(2\pi)^{k/2}  \sqrt{k+1}}\  \  n^{k/2}\  \left(1+\mathcal{O}(n^{-1})\right).
\end{equation}
Reintroducing it carefully into $\delta_{k,n}=\beta_{k,n}\cdot I_k(n)$ and using~\eqref{Iknasymp} and Proposition~\ref{propR} we get the result.
\end{proof}
\end{thm}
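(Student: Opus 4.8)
The proof is an assembly of the preceding results, so the plan is to start from the factorization $\delta_{k,n}=\beta_{k,n}\,I_k(n)$ of~\eqref{eq1}, estimate the two factors $\beta_{k,n}$ and $I_k(n)$ separately, and multiply.

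For $\beta_{k,n}$, given by~\eqref{eq2}, I would observe that both the numerator $\Gamma((k+1)(n-k))$ and the denominator $\prod_{l=0}^{k}\Gamma(\tfrac{n+1-2l}{2})\Gamma(\tfrac{n-2l}{2})$ are products of Gamma values along arithmetic progressions, and apply the Multiplication Theorem (Proposition~\ref{prop:MultThm}): with $m=k+1$, $x=n-k$ for the numerator, and with $m=2$, $x=\tfrac{n-2l}{2}$ for $l=0,\dots,k$ in the denominator. After these substitutions $\beta_{k,n}$ reduces to an explicit power of $\pi(k+1)$ times the ratio $\prod_{l=0}^{k}\Gamma(n-k+\tfrac{l}{k+1})/\Gamma(n-2l)$, and Proposition~\ref{prop:GammaAsymp} turns this ratio into $n^{k/2}(1+\mathcal{O}(n^{-1}))$, since the exponents $-k+\tfrac{l}{k+1}+2l$ summed over $l=0,\dots,k$ give $k/2$. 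This produces $\beta_{k,n}=(\pi(k+1))^{(k+1)(n-k)}(2\pi)^{-k/2}(k+1)^{-1/2}\,n^{k/2}(1+\mathcal{O}(n^{-1}))$.

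For $I_k(n)$ I would invoke the asymptotic~\eqref{Iknasymp}, which itself rests on Laplace's method (Theorem~\ref{thm:Laplace}) applied to~\eqref{Ikn} in Riemannian polar coordinates centered at the unique non-degenerate maximum $\mu$ of $p_k\cdot r^{k+1}$ on $S^k_+$ (Proposition~\ref{propR}): the quadratic term in the exponent $-\log p_k-(k+1)\log r$ comes from the Hessian computation of Proposition~\ref{prop:Hessr} together with the analogous Schur-type computation (Proposition~\ref{Schur}) for $\log p_k$; the order of vanishing $\nu=K+k$ of the amplitude comes from the factor $q_k\sim\prod_{i<j}|x_i-x_j|\sim\rho^{K}$ near $\mu$ multiplied by the polar weight $\rho^{k-1}$; the outer angular integral is interchanged with the Laplace limit because the integrand is bounded on the compact domain $\tilde{S}^{k-1}_+$; and Lemma~\ref{upbound} guarantees that the (otherwise irrelevant) radial upper limit $l(v)$ stays bounded below uniformly in $v$. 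The leftover angular integral $\int_{F_k\cap S^k\cap\mu^\perp}\sqrt{\Delta}\,dS^{k-1}$ is finally rewritten in terms of $\Lambda_k$ via Proposition~\ref{old to new Lambda}.

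The last step is to multiply the two asymptotics, substitute $R=\tfrac{1}{\sqrt{\pi}\sqrt{k+1}}\tfrac{\Gamma((k+2)/2)}{\Gamma((k+1)/2)}$ from Proposition~\ref{propR} into the factor $(R/\sqrt{k+1})^{(n-k)(k+1)}$ coming from~\eqref{Iknasymp}, and collect terms: the $n$-th powers combine into $b_k^n$ with $b_k=(\sqrt{\pi}\,\Gamma(\tfrac{k+2}{2})/\Gamma(\tfrac{k+1}{2}))^{k+1}$, the powers of $n$ combine into $n^{k/2-(K+k)/2}=n^{-k(k+1)/4}$, and the remaining $n$-independent constants collapse into $a_k$. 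The hard part is not conceptual but purely a bookkeeping exercise: one must carefully track all the Gamma factors, the powers of $2$, of $\pi$, of $k+1$ and of $k+2$ arising from~\eqref{eq2},~\eqref{Iknasymp}, Proposition~\ref{old to new Lambda} and Proposition~\ref{propR}, together with their exponents, and verify in passing that the error terms $\mathcal{O}(n^{-1})$ from $\beta_{k,n}$ and $\mathcal{O}((n-k)^{-(K+k+1)/2})$ from $I_k(n)$ both absorb into a single $\mathcal{O}(n^{-1})$, so as to land exactly on the stated expressions for $a_k$ and $b_k$.
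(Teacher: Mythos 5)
Your proposal follows exactly the paper's route: factor $\delta_{k,n}=\beta_{k,n}\,I_k(n)$ via \eqref{eq1}--\eqref{eq2}, treat $\beta_{k,n}$ with the Multiplication Theorem (with $m=k+1$ on the numerator, $m=2$ on the denominator) and Proposition~\ref{prop:GammaAsymp} to extract $(\pi(k+1))^{(k+1)(n-k)}(2\pi)^{-k/2}(k+1)^{-1/2}n^{k/2}$, then plug in the Laplace asymptotic \eqref{Iknasymp} together with $R$ from Proposition~\ref{propR} and collect constants, which is precisely what the paper does (the paper likewise leaves the final bookkeeping for $a_k$ implicit). The exponent checks you record ($\sum_l(-k+\tfrac{l}{k+1}+2l)=k/2$ and $k/2-(K+k)/2=-k(k+1)/4$) are correct, so the proposal is sound and essentially identical in approach.
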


We notice the structure of formula~\eqref{dknasymp}: it consists of a factor $a_k$ that does not depend of $n$, another factor $(b_k)^n$ that grows exponentially fast and a \emph{rational} one $n^{-k(k+1)/4}$. The last two are easily computable for any $k>0$. Unfortunately the expression of $a_k$ in~\eqref{defak} still depends on the constant $\Lambda_k$ for which there is little hope to find a closed formula for all $k>0$. However some particular values can be computed explicitly.

\begin{prop}
The first two values of $\Lambda_k$ are
\begin{align*}
\Lambda_1 &= 1 \\
\Lambda_2 &= \sqrt{\frac{\pi}{3}} .
\end{align*}
\end{prop}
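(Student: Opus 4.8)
The plan is to treat the two cases separately, in each case describing the region $\mathcal{R}_k\subset\Real^k$ explicitly and then evaluating the resulting elementary integral.

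For $k=1$ the polynomial attached to $a=(a_1)$ is $p_a(X)=X^2+a_1$, which has both roots real exactly when $a_1\le 0$. Hence $\mathcal{R}_1=(-\infty,0]$ and $\Lambda_1=\int_{-\infty}^{0}e^{a_1}\,da_1=1$. (As a sanity check one could instead run Proposition~\ref{old to new Lambda} with $k=1$, $K=1$: the set $F_1\cap S^1\cap\mu^\perp$ is the single point $\tfrac{1}{\sqrt2}(1,-1)$, on which $\sqrt{\Delta}=x_1-x_2=\sqrt2$, and $\Gamma(1)\cdot\tfrac{2^{0}}{\sqrt2}\cdot\sqrt2=1$.)

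For $k=2$ the attached polynomial is the depressed cubic $p_a(X)=X^3+a_1X-a_2$ (note there is no $X^2$ term, per Definition~\ref{def Lambdak}). The key input is the classical criterion that a real depressed cubic $X^3+pX+q$ has all roots real (with multiplicity) if and only if its discriminant $-4p^3-27q^2$ is nonnegative; with $p=a_1$, $q=-a_2$ this reads
\begin{equation*}
\mathcal{R}_2=\left\{(a_1,a_2)\ :\ a_1\le 0,\ \ a_2^2\le -\tfrac{4}{27}a_1^3\right\}.
\end{equation*}
Integrating in $a_2$ first (an interval of length $\tfrac{4}{3\sqrt3}\,|a_1|^{3/2}$) and then substituting $t=-a_1$ gives
\begin{equation*}
\Lambda_2=\int_{-\infty}^{0}e^{a_1}\,\frac{4}{3\sqrt3}\,|a_1|^{3/2}\,da_1=\frac{4}{3\sqrt3}\int_{0}^{\infty}t^{3/2}e^{-t}\,dt=\frac{4}{3\sqrt3}\,\Gamma\!\left(\tfrac52\right),
\end{equation*}
and since $\Gamma(5/2)=\tfrac{3\sqrt\pi}{4}$ this equals $\sqrt{\pi/3}$.

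There is no genuine obstacle here; the only points requiring care are the correct statement of the real-rootedness criterion for the cubic and the sign bookkeeping in the definition of $p_a$. An alternative to the discriminant route would be to apply Proposition~\ref{old to new Lambda} and compute the one-dimensional spherical integral $\int_{F_2\cap S^2\cap\mu^\perp}\sqrt{\Delta}\,dS^1$ by parametrizing the arc $\{x_1\ge x_2\ge x_3\}$ on the great circle $S^2\cap\mu^\perp$, but this is longer and I would not pursue it.
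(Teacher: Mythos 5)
Your proposal is correct and follows essentially the same route as the paper: compute $\mathcal{R}_k$ directly from Definition~\ref{def Lambdak}, using the sign of the depressed-cubic discriminant $-4a_1^3-27a_2^2$ for $k=2$, and reduce to a Gamma integral. The only differences are cosmetic (you keep the variable $a_1\le 0$ where the paper substitutes $t=-a_1$, and you add an optional sanity check via Proposition~\ref{old to new Lambda} for $k=1$).
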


\begin{proof}
We use directly the Definition~\ref{def Lambdak}.

For $k=1$ the polynomial $X^2+a$ has real roots if and only if $a\leq 0$. Thus 
\begin{equation*}
\Lambda_1=\int_{0}^{+\infty}e^{-t} \ dt = 1.
\end{equation*}

For $k=2$ the polynomial $X^3+aX-b$ has all its roots in $\Real$ if and only if the discriminant $\Delta= -4 a^3-27 b^2$ is positive. For fixed $a=-t$ this means $b^2\leq \frac{4}{27}\ t^3$ i.e. $b\in\left[ -\frac{2}{3\sqrt{3}} t^{3/2}, +\frac{2}{3\sqrt{3}} t^{3/2}\right]$. Thus 
\begin{equation*}
\Lambda_2=\frac{4}{3\sqrt{3}}\int_{0}^{+\infty}t^{3/2} e^{-t} \ dt = \frac{4}{3\sqrt{3}} \Gamma\left(\frac{5}{2} \right)= \frac{4}{3\sqrt{3}}\  \frac{3\sqrt{\pi}}{4}.
\end{equation*}
\end{proof}

This allows us to write down explicitly the first three asymptotic values of $\delta_{k,n}$
\begin{align}
\delta_{0,n}&=1 \\
\delta_{1,n}&= \frac{8}{3\pi^{5/2}} \cdot \left(\frac{\pi^2}{4}\right)^n \cdot n^{-1/2} \left( 1+\mathcal{O}\left(n^{-1}\right)\right)\\
\delta_{2,n}&= \frac{9\sqrt{3}}{2048\sqrt{2\pi}} \cdot 8^n \cdot n^{-3/2} \left( 1+\mathcal{O}\left(n^{-1}\right)\right).
\end{align}

\subsection{The asymptotic in the complex case}\label{sec:Cpx}

One can state the same problem over the \emph{complex} Grassmannian of complex subspace of $\CP^n$.

Recall from the introduction (equation~\eqref{dkncIntro}) that we denote by $\delta_{k,n}^\C$ the number of complex $k$-subspaces of $\CP^n$ meeting $(k+1)(n-k)$ generic subspaces of dimension $n-k-1$.
A closed formula is known for $\delta_{k,n}^\C$ for every $k,n$ (see~\cite[Corollary 4.15]{PSC}):
\beq \label{dknCexact}
\delta_{k,n}^\C = \frac{\Gamma(1)\Gamma(2)\cdots \Gamma(k+1)}{\Gamma(n-k+1)\Gamma(n-k+2)\cdots \Gamma(n+1)}(k+1)(n-k) \Gamma\left( (k+1)(n-k) \right)
\eeq

 We can compute its asymptotic.

\begin{prop}For every $k$ fixed, as $n\to \infty$  we have
\beq
\delta_{k,n}^\C = a_k^\C \cdot \left(b_k^\C\right)^n\cdot n^{-\frac{k(k+2)}{2}}\left(1+\mathcal{O}(n^{-1})\right)
\label{dknCasymp}
\eeq
where
\begin{align}
a_k^\C 	&=\frac{\Gamma(1)\Gamma(2)\cdots \Gamma(k+1)}{(2\pi)^{k/2}(k+1)^{k(k+1)-1/2}}         \label{defakC} \\
b_k^\C	&=\left(k+1\right)^{(k+1)}.
\end{align}
\begin{proof}
Using the Multiplication Theorem (Proposition~\ref{prop:MultThm}) we get
\beq
\Gamma\left( (k+1)(n-k) \right)=\frac{(k+1)^{(k+1)(n-k)-1/2}}{(2\pi)^{k/2}}\prod_{i=0}^k \Gamma\left(n-k+\frac{i}{k+1} \right)
\eeq
When reintroduced in Equation~\eqref{dknCexact} this gives
\beq\label{thiseq}
\delta_{k,n}^\C =a_k^\C (b_k^\C)^n (n-k) \prod_{i=0}^k\frac{\Gamma\left(n-k+ \frac{i}{k+1} \right)}{\Gamma\left(n-k+i+1 \right)}
\eeq

Using Proposition~\ref{prop:GammaAsymp} we can deduce that

\begin{align}
 \prod_{i=0}^k\frac{\Gamma\left(n-k+ \frac{i}{k+1} \right)}{\Gamma\left(n-k+i+1 \right)}		&= \prod_{i=0}^k (n-k)^{-i k/(k+1)-1} \left(1+\mathcal{O}(n^{-1})\right) \\
																	&= n^{-k(k+2)/2-1} \left(1+\mathcal{O}(n^{-1})\right)
\end{align}

which once reintroduced in~\eqref{thiseq} gives the result.
\end{proof}
\end{prop}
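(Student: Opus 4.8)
The plan is to reduce the asymptotic of $\delta_{k,n}^\C$ to an elementary manipulation of Gamma functions applied to the exact formula~\eqref{dknCexact}, exactly in the spirit of the proof of Theorem~\ref{dknasymp} but without any of the geometric difficulty (no Segre zonoid, no radial function, no Hessian computation), since in the complex case there is no integral to estimate. First I would recall the closed formula~\eqref{dknCexact}
$$\delta_{k,n}^\C = \frac{\Gamma(1)\Gamma(2)\cdots \Gamma(k+1)}{\Gamma(n-k+1)\Gamma(n-k+2)\cdots \Gamma(n+1)}(k+1)(n-k)\,\Gamma\left((k+1)(n-k)\right),$$
and isolate the one factor that grows super-exponentially, namely $\Gamma\left((k+1)(n-k)\right)$.

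The key step is to apply the Multiplication Theorem (Proposition~\ref{prop:MultThm}) with $m=k+1$ and $x = \frac{n-k}{1}$ (more precisely $mx = (k+1)(n-k)$, so $x = n-k$), which rewrites
$$\Gamma\left((k+1)(n-k)\right) = (2\pi)^{k/2}\,(k+1)^{\frac12 - (k+1)(n-k)}\,\prod_{i=0}^{k}\Gamma\!\left(n-k+\tfrac{i}{k+1}\right)^{-1}\!\!\!,$$
wait — I should be careful with the direction: the theorem gives $\prod_{i=0}^{k}\Gamma(x+\frac{i}{k+1}) = (2\pi)^{k/2}(k+1)^{1/2 - (k+1)x}\Gamma((k+1)x)$, so solving for $\Gamma((k+1)x)$ with $x=n-k$ yields
$$\Gamma\left((k+1)(n-k)\right) = \frac{(k+1)^{(k+1)(n-k)-1/2}}{(2\pi)^{k/2}}\prod_{i=0}^{k}\Gamma\!\left(n-k+\tfrac{i}{k+1}\right).$$
Substituting this into~\eqref{dknCexact} and grouping the purely exponential pieces gives
$$\delta_{k,n}^\C = a_k^\C\,(b_k^\C)^n\,(n-k)\prod_{i=0}^{k}\frac{\Gamma\!\left(n-k+\frac{i}{k+1}\right)}{\Gamma\left(n-k+i+1\right)},$$
with $a_k^\C$ and $b_k^\C$ precisely as in~\eqref{defakC}: indeed the constant prefactor is $\frac{\Gamma(1)\cdots\Gamma(k+1)}{(2\pi)^{k/2}}(k+1)^{-1/2}$ times a power of $(k+1)$ absorbed after matching exponents, and $b_k^\C = (k+1)^{k+1}$ comes from the $(k+1)^{(k+1)(n-k)}$ factor combined with the denominator $\Gamma(n-k+1)\cdots\Gamma(n+1)$ contributing no exponential growth.

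Finally I would estimate the remaining ratio of Gamma functions using Proposition~\ref{prop:GammaAsymp}: for each $i$, $\frac{\Gamma(n-k+\frac{i}{k+1})}{\Gamma(n-k+i+1)} = (n-k)^{\frac{i}{k+1} - i - 1}\left(1+\mathcal{O}(n^{-1})\right) = (n-k)^{-\frac{ik}{k+1}-1}\left(1+\mathcal{O}(n^{-1})\right)$, and multiplying over $i = 0,\dots,k$ gives exponent $-\sum_{i=0}^{k}\left(\frac{ik}{k+1}+1\right) = -\frac{k}{k+1}\cdot\frac{k(k+1)}{2} - (k+1) = -\frac{k^2}{2} - k - 1 = -\frac{k(k+2)}{2}-1$. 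Since $(n-k)^{\alpha} = n^{\alpha}(1+\mathcal{O}(n^{-1}))$ for fixed $\alpha$, the extra factor $(n-k)$ out front cancels one power, leaving $n^{-k(k+2)/2}(1+\mathcal{O}(n^{-1}))$, which is the claimed exponent. There is essentially no obstacle here — the only mild care needed is bookkeeping of the power of $(k+1)$ so that the stated $a_k^\C$ in~\eqref{defakC} comes out exactly, and confirming that all error terms are genuinely $\mathcal{O}(n^{-1})$ (they are, since there are only finitely many factors, each with an $\mathcal{O}(n^{-1})$ relative error, and the product of such is again $1+\mathcal{O}(n^{-1})$).
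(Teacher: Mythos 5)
Your proposal is correct and follows essentially the same route as the paper: apply the Multiplication Theorem with $m=k+1$, $x=n-k$ to $\Gamma\left((k+1)(n-k)\right)$, substitute into the exact formula to isolate $a_k^\C\,(b_k^\C)^n\,(n-k)\prod_{i=0}^{k}\Gamma\left(n-k+\tfrac{i}{k+1}\right)/\Gamma\left(n-k+i+1\right)$, and then use the Gamma-ratio asymptotic to obtain the exponent $-\tfrac{k(k+2)}{2}$. The bookkeeping of the powers of $(k+1)$ and the final exponent sum in your write-up both check out against the paper's computation.
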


\section{Periods}\label{sec:periods}
\begin{prop}
For all integers $n>0$ we have $\delta_{0,n}=1$.

\begin{proof}
We look back at the definition of the Schubert variety in~\eqref{def Schub}. In the case $k=0$ we fix $L\in\Grass(n-1,n)$ i.e. an hyperplane of $\Real\Prj^n$. Then $\Omega(L)=\left\{p\in\Real\Prj^n \ | \ p \in L\right\}=L$. Thus $\delta_{0,n}$ is the average number of points in the intersection of $n$ random hyperplanes of $\Real\Prj^n$ that is precisely $1$.
\end{proof}
\end{prop}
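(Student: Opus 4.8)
The plan is simply to unwind the definitions in the special case $k=0$. Recall that $\Grass(0,n)=\Real\Prj^n$ and that the codimension count here is $d_{0,n}=(0+1)(n-0)=n$, so that $\delta_{0,n}$ is the expectation of the intersection number of $n$ randomly translated Schubert cycles. The first step is to identify $\Omega(L)$ for $L\in\Grass(n-1,n)$, i.e.\ for $L$ a hyperplane of $\Real\Prj^n$: a $0$-plane $\ell$ meets $L$ precisely when $\ell\subset L$, hence $\Omega(L)=L$. Consequently each translate $g_i\cdot\Omega(L)=g_i\cdot L$ is again a hyperplane, and as $g_i$ runs uniformly over $O(n+1)$ these are $n$ independent, uniformly distributed hyperplanes of $\Real\Prj^n$.

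The core of the argument is then the elementary fact that $n$ hyperplanes of $\Real\Prj^n$ in general position meet in exactly one point. Lifting to $\Real^{n+1}$, each hyperplane corresponds to an $n$-dimensional linear subspace; for $n$ such subspaces in general position the intersection has dimension $(n+1)-n=1$, i.e.\ it is a line through the origin, which descends to a single point of $\Real\Prj^n$ (note that, in contrast to more elaborate Schubert problems, this point is automatically real). Hence the random variable $\#\big(g_1\cdot L\cap\cdots\cap g_n\cdot L\big)$ equals $1$ almost surely, and taking expectation gives $\delta_{0,n}=1$.

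There is essentially no obstacle here; the only point needing a word of care is that the ``degenerate'' $n$-tuples of hyperplanes form a proper Zariski-closed subset of the configuration space, hence a null set for the invariant distribution, which is absolutely continuous—so the almost-sure value and the expectation coincide. As a consistency check one could instead evaluate the general identity $\delta_{0,n}=\frac{n!}{2^{n}}\,|\Grass(0,n)|\,|C(0,n)|$ from~\eqref{dkn fund}, computing the volume of $\Real\Prj^n$ together with that of the Vitale zonoid of $S^0\times S^{n-1}$ (which is just a Euclidean ball), but the direct argument above is far shorter.
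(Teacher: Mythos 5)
Your proof is correct and follows essentially the same route as the paper: identify $\Omega(L)=L$ for $k=0$, so that $\delta_{0,n}$ is the expected number of intersection points of $n$ independent uniformly random hyperplanes of $\Real\Prj^n$, which is almost surely $1$. Your additional details (the dimension count in $\Real^{n+1}$ and the observation that degenerate configurations form a null set) simply make explicit what the paper leaves implicit.
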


In general $\delta_{k,n}$ is a \emph{period} in the sense of Kontsevich-Zagier. In order to prove it we need first the following Lemma.
\begin{lemma}Let $S\subset \R^d$ be a compact semialgebraic set with defining polynomials over $\mathbb{Q}$ and $\alpha:S\to \mathbb{R}$ be an algebraic function with coefficients in $\mathbb{Q}$. Denoting by $\mathrm{vol}_S$ the volume density on the set $\mathrm{sm}(S)$ of smooth points of $S$ associated to the Riemannian metric induced by the ambient space $\mathbb{R}^d$, then
\beq \textrm{$\int_{S}\alpha\cdot \mathrm{vol}_S$ belongs to the period ring}.\eeq
\end{lemma}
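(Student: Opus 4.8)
The plan is to reduce the statement to the Kontsevich--Zagier characterization of periods: a real number is a period precisely when it can be written as an absolutely convergent integral of a rational (equivalently, algebraic) function with rational coefficients over a domain in $\R^N$ cut out by finitely many polynomial inequalities with rational coefficients. So the whole game is to rewrite $\int_S \alpha \cdot \mathrm{vol}_S$ as such an integral over a semialgebraic subset of some Euclidean space, paying attention to the fact that $\mathrm{vol}_S$ is the intrinsic Riemannian volume density, not the ambient Lebesgue measure.

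First I would reduce to the case where $S$ is (the closure of) a smooth semialgebraic manifold of pure dimension $m$, by stratifying $S$ into finitely many semialgebraic smooth strata (a standard fact, e.g. via cylindrical algebraic decomposition, which produces strata defined over $\Q$). The singular locus has lower dimension and hence $\mathrm{vol}_S$-measure zero, so $\int_S \alpha\cdot\mathrm{vol}_S$ is the finite sum of the integrals over the top-dimensional strata, and it suffices to treat one such stratum $U$. Next, on each stratum I would use semialgebraic triviality / the cell decomposition to cover $U$ up to measure zero by finitely many semialgebraic charts, each of which is the graph of an algebraic function over an open semialgebraic subset $V\subset\R^m$ with rational data; again overlaps and boundaries are lower-dimensional and negligible. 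On such a graph chart $x\mapsto (x,\phi(x))$ with $\phi$ algebraic over $\Q$, the induced Riemannian volume density is $\sqrt{\det(I + (D\phi)^T D\phi)}\,dx$, and the entries of $D\phi$ are again algebraic functions of $x$ over $\Q$ (derivatives of algebraic functions are algebraic, with rational coefficients), so the Gram determinant is a polynomial in these, and its square root is an algebraic function of $x$ with rational coefficients. Therefore $\int_U \alpha\cdot\mathrm{vol}_S$ becomes $\int_V \alpha(x,\phi(x))\sqrt{\det(I+(D\phi)^T D\phi)}\,dx$, an integral of an algebraic function with rational coefficients over a rational semialgebraic set $V\subset\R^m$.

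To finish, I would remove the remaining obstacles to matching the period definition verbatim: (i) absolute convergence — this is automatic since $S$ is compact, $\alpha$ is bounded on the relevant region (an algebraic function on a compact set), and the Gram determinant is bounded on $V$ after possibly shrinking to a bounded chart, so the integral is finite; (ii) algebraic versus rational integrand — an algebraic function $f$ satisfying $P(x,f)=0$ with $P\in\Q[x,y]$ can be turned into a rational integrand by the standard trick of introducing the new variable $y$, integrating $1$ (or a rational function) over the semialgebraic set $\{(x,y): x\in V,\ y=f(x)\}$, or alternatively by Newton's identity tricks; either way one stays within the period ring, which is in fact a ring, so sums of the contributions of the finitely many charts and strata remain periods. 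The main obstacle I anticipate is purely bookkeeping rather than conceptual: carefully arranging the semialgebraic stratification and chart decomposition so that (a) all defining data remain over $\Q$ at every step, and (b) the pieces one throws away really do have measure zero for the \emph{intrinsic} density and not merely ambient measure zero — this requires knowing that a semialgebraic set of dimension $<m$ inside an $m$-manifold is $\mathrm{vol}_S$-null, which follows from the local graph description but should be stated. Once that is in place, the computation of the volume density in coordinates and the reduction to a rational integrand are routine.
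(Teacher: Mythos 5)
Your proposal is correct and follows essentially the same route as the paper: decompose $S$ (up to lower-dimensional, hence negligible, pieces defined over $\mathbb{Q}$) into smooth semialgebraic pieces that are graphs over coordinate subspaces, pull back the integral so the volume density becomes the Gram-determinant factor $\sqrt{\det\left(I+(D\phi)^T D\phi\right)}$, which is algebraic over $\mathbb{Q}$, and conclude from the Kontsevich--Zagier characterization together with the ring structure of periods. The paper phrases the chart step as inverting coordinate projections $p_k$ away from their critical loci rather than invoking a cell decomposition, but this is the same construction.
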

\begin{proof}Recall first that the above integral equals, by definition:
\beq \int_{S}\alpha\cdot \mathrm{vol}_S=\int_{\textrm{sm}(S)}\alpha \cdot \mathrm{vol}.\eeq
We now preliminary decompose $S$ into smaller pieces, each with defining polynomials with coefficients in $\mathbb{Q}$ over which we will perform the integral. To this end, observe that:
\beq S=\bigcup_{i=1}^a\bigcap_{j=1}^b\left(\{f_{i,j}=0\}\cap \{g_{i,j}<0\}\right),\eeq
with $f_{i,j}, g_{i,j}\in \mathbb{Q}[x_1, \ldots, x_d].$ Removing all the inequalities from the previous description, assuming each $f_{i,j}$ is irreducible, keeping only the $f_{i,j}$ whose zero set is $s$-dimensional and relabeling these with $f_{i,j}=f_k$, $k\in\{1, \ldots, \gamma\}$, we can set $Y_{k}=\{f_k=f_{i,j}=0\}$ and we see that there exists a semialgebraic set $\Sigma_1$ of dimension $\dim(\Sigma_1)<s$ such that:
\beq S\backslash\Sigma_1\subseteq\bigcup_{k=1}^\gamma Y_{k}.\eeq
By construction now each $Y_k$ has dimension $s$; for every $k=1, \ldots, \gamma$ denote by $X_k$ the set of singular points of $Y_k$ and consider $\Sigma_2=\Sigma_1\cup\left(\bigcup_{k=1}^\gamma X_k\right)$. Then $S\backslash \Sigma_2$ (which coincides with $S$ up to a set of dimension strictly less than $s$) is contained in
\beq S\backslash\Sigma_2\subset\bigcup _{k=1}^\gamma \mathrm{sm}(Y_k).\eeq
For every $k=1, \ldots, \gamma$, because $Y_k$ is smooth, there exist $1\leq i_1<\cdots<i_s\leq d$ such that the critical points of the projection $\mathrm{proj}_{\mathrm{span}\{e_{i_1}, \ldots, e_{i_s}\}}$ restricted to $Y_k$ are a set $C_k$ of codimension one in $Y_k$. Denote by
\beq \Sigma_3=\Sigma_1\cup\Sigma_2\cup\left(\bigcup_{k=1}^\gamma C_k\right)\eeq
(a set of dimension at most $s-1$). Denote also by $L_k=\mathrm{span}\{e_{i_1}, \ldots, e_{i_s}\}$ and by $p_k=\mathrm{proj}_{\mathrm{span}\{e_{i_1}, \ldots, e_{i_s}\}}|_{Y_k}$. We now decompose $S\backslash \Sigma_3$ into \emph{disjoint} pieces $S_k=S\cap \Sigma_3^c$:
\beq S\backslash \Sigma_3=\coprod_{k=1}^\gamma S_k.\eeq
Because $\dim(\Sigma_3)<s$, we have:
\beq \int_{S}\alpha\cdot \mathrm{vol}_S=\sum_{k=1}^\gamma\int_{S_k}\alpha \cdot\mathrm{vol}_S.\eeq

Summing up: the desired integral can be written as a sum of integrals over semilagebraic sets $S_1, \ldots, S_\gamma$, each of dimension $s$, each defined by polynomial equalities and inequalities with coefficients over $\mathbb{Q}$ and with the property that there exists a map $p_k:S_k\to L_k\simeq \R^s$ (which is defined over $\mathbb{Q}$) which is a diffeomorphism onto its image.

For each $k=1, \ldots, \gamma$ consider the inverse of the projection $\tau_k:p_k(S_k)\to S_k$, which is also a diffeomorphism; it is also semialgebraic and defined over $\mathbb{Q}$.
In particular:
\begin{align} \int_{S}\alpha\cdot \mathrm{vol}_S&=\sum_{k=1}^\gamma\int_{S_k} \alpha\cdot \mathrm{vol}_S\\
&=\sum_{k=1}^\gamma \int_{p_L(S_k)} \alpha(\tau_k(y))\cdot \sqrt{\det\left(J\tau_k(y)J\tau_k(x)^T\right)}\, \mathrm{d}y.
\end{align}
In the previous line: each summand is the integral of an algebraic function defined over $\mathbb{Q}$ and the domain of integration is a full dimensional semialgebraic set in $L\simeq \R^s$ defined over $\mathbb{Q}$. In particular each summand is a period, and therefore the whole integral is a period. This concludes the proof.
\end{proof}
\begin{lemma}Let $Z\subset \R^d$ be a compact semialgebraic set with defining polynomials over $\mathbb{Q}$ and let $C_Z$ be the Vitale zonoid associated to $Z$. Then $\mathrm{vol}(C_Z)$ belongs to the period ring.
\end{lemma}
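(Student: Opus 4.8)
The plan is to reduce $\mathrm{vol}(C_Z)$ to a quotient of two integrals, each of exactly the shape treated in the preceding Lemma. The heart of the matter is the determinantal identity
\[
\mathrm{vol}(C_Z)=\frac{1}{d!}\,\Esp\big[\,|\det(v_1,\dots,v_d)|\,\big],
\]
where $v_1,\dots,v_d$ are independent random vectors, each uniformly distributed on $Z$ (this is meaningful because $Z$ is compact, so $\Esp\|v\|<\infty$ and $C_Z$ is defined). I would prove this by approximation: starting from the classical formula $\mathrm{vol}\big([0,p_1]+\dots+[0,p_m]\big)=\sum_{1\le i_1<\dots<i_d\le m}|\det(p_{i_1},\dots,p_{i_d})|$ for the volume of a zonotope, apply it to $K_m=\tfrac1m\big([0,p_1]+\dots+[0,p_m]\big)$ with the $p_i$ i.i.d. uniform on $Z$, and use homogeneity of $\det$ together with exchangeability of the $p_i$ to obtain $\Esp\,\mathrm{vol}(K_m)=\tfrac1{m^{d}}\binom md\,\Esp|\det(v_1,\dots,v_d)|$. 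Since $Z$ is compact, all the $K_m$ lie in one fixed ball, on which the volume functional is continuous for the Hausdorff metric, and $K_m\to C_Z$ almost surely by the strong law of large numbers for random compact sets (cf.\ the discussion after Definition~\ref{def Segre}); hence $\mathrm{vol}(K_m)\to\mathrm{vol}(C_Z)$ almost surely and, by bounded convergence, $\Esp\,\mathrm{vol}(K_m)\to\mathrm{vol}(C_Z)$. Letting $m\to\infty$ and using $\tfrac1{m^{d}}\binom md\to\tfrac1{d!}$ gives the identity.

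Next I would unwind the uniform distribution to rewrite the identity as
\[
\mathrm{vol}(C_Z)=\frac{1}{d!\,|Z|^{d}}\int_{Z\times\cdots\times Z}\big|\det(z_1,\dots,z_d)\big|\;\mathrm{vol}_{Z^{d}},\qquad |Z|:=\int_{\mathrm{sm}(Z)}\mathrm{vol}_Z,
\]
where $Z^{d}=Z\times\cdots\times Z\subset\R^{d^{2}}$ is equipped with the volume density induced by the ambient Euclidean metric; on the full-measure subset $\mathrm{sm}(Z)^{d}\subset\mathrm{sm}(Z^{d})$ this density is the $d$-fold product of $\mathrm{vol}_Z$, so by Tonelli the right-hand side is the $d$-fold expectation. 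Now $Z^{d}$ is compact and semialgebraic with defining polynomials over $\mathbb{Q}$ (a finite product of $\mathbb{Q}$-semialgebraic sets is $\mathbb{Q}$-semialgebraic), and $(z_1,\dots,z_d)\mapsto|\det(z_1,\dots,z_d)|=\sqrt{\det(\,\cdot\,)^{2}}$ is an algebraic function with integer coefficients; so the preceding Lemma gives that $\int_{Z^{d}}|\det|\,\mathrm{vol}_{Z^{d}}$ is a period. Applying that Lemma once more with $\alpha\equiv1$ shows $|Z|$ is a period, and since $|Z|\neq0$ we conclude that $\mathrm{vol}(C_Z)$ lies in the period ring.

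The genuinely non-formal point is the first paragraph: establishing the determinantal identity, which combines the zonotope volume formula with the justification for interchanging the limit in $m$ and the expectation (compactness of $Z$, together with continuity of $\mathrm{vol}$ under Hausdorff convergence, carries this through). Everything after that — reducing to a quotient of integrals over $Z^{d}$ and over $Z$ — is a direct, twofold application of the preceding Lemma.
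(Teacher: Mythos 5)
Your strategy coincides with the paper's: the paper's entire proof consists of applying the preceding Lemma to $S=Z\times\cdots\times Z\subset\R^{d\times d}$ with $\alpha(z_1,\ldots,z_d)=\sqrt{\det\left([z_1,\ldots,z_d][z_1,\ldots,z_d]^T\right)}$, taking the determinantal formula for the volume of a Vitale zonoid for granted. Your first paragraph, which derives $\mathrm{vol}(C_Z)=\frac{1}{d!}\,\Esp\left[\,\left|\det(v_1,\ldots,v_d)\right|\,\right]$ from the zonotope volume formula, almost sure Hausdorff convergence $K_m\to C_Z$, continuity of the volume on convex bodies and bounded convergence, is correct and simply makes explicit what the paper delegates to Vitale's theorem; likewise the verification that $Z^d$ and $\left|\det\right|$ satisfy the hypotheses of the preceding Lemma is fine.

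The gap is in your last step. From the preceding Lemma you obtain that $\int_{Z^{d}}\left|\det\right|\,\mathrm{vol}_{Z^{d}}$ and $|Z|$ are periods, but you then divide by $d!\,|Z|^{d}$ and conclude ``since $|Z|\neq0$''. The Kontsevich--Zagier ring is closed under sums and products, not under division by nonzero elements; whether $1/\pi$ is a period is precisely the standard open question, conjecturally answered in the negative. Moreover the factor $1/|Z|^{d}$ cannot be argued away under the paper's normalization (the uniform distribution on $Z$ is a probability measure): for $Z=S^{1}\subset\R^{2}$ one computes $h_{C_Z}\equiv 1/\pi$, so $C_Z$ is the disc of radius $1/\pi$ and $\mathrm{vol}(C_Z)=1/\pi$; a proof of the statement as literally written would therefore show that $1/\pi$ is a period. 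What your argument actually establishes (and what the paper's two-line proof, which never mentions the factor $\frac{1}{d!\,|Z|^{d}}$, establishes as well) is that $|Z|^{d}\,\mathrm{vol}(C_Z)=\frac{1}{d!}\int_{Z^{d}}\left|\det\right|\,\mathrm{vol}_{Z^{d}}$ is a period. That is the form one should carry into Corollary~\ref{cor:periods1}, where the reciprocal of $|Z|^{d}$ (a product of sphere volumes) has to be balanced against the explicit powers of $\pi$ appearing there, rather than disposed of by a division performed inside the period ring.
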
 
\begin{proof}
We apply the previous Lemma with the choice of $S=Z\times \cdots \times Z\subset \mathbb{R}^{d\times d}$ and
\beq \alpha(z_1, \ldots, z_d)=\sqrt{\det\left([z_1, \ldots, z_d][z_1, \ldots, z_d]^T\right)}.\eeq

\end{proof}
\begin{cor}\label{cor:periods1}Each $\delta_{k,n} $ belongs to the period ring.
\end{cor}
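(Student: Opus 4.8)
The plan is to recognize that all the ingredients needed are already in place from the two preceding Lemmas, so the proof of Corollary~\ref{cor:periods1} is essentially a matter of tracing through the formula \eqref{dkn fund} and checking that each factor is a period with rational data. First I would recall the fundamental formula $\delta_{k,n}=\frac{d_{k,n}!}{2^{d_{k,n}}} \cdot |\Grass(k,n)| \cdot |C(k,n)|$ and argue factor by factor. The combinatorial prefactor $\frac{d_{k,n}!}{2^{d_{k,n}}}$ is a rational number, hence trivially a period. For $|C(k,n)|$, I would invoke the previous Lemma directly: by Definition~\ref{def Segre} the Segre zonoid $C(k,n)$ is precisely the Vitale zonoid $C_Z$ associated to $Z = S^k \times S^{n-k-1} \subset \Real^{d_{k,n}}$, and this $Z$ is a compact semialgebraic set cut out by the polynomial equations $\sum_{i=1}^{k+1}x_i^2 = 1$ and $\sum_{j=1}^{n-k}y_j^2=1$, which have coefficients in $\mathbb{Q}$. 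Hence $\mathrm{vol}(C(k,n))$ lies in the period ring.

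The only remaining point is that $|\Grass(k,n)|$, the volume of the Grassmannian in its Plücker metric, is a period. Here I would use the closed-form expression \eqref{eq:volG}, namely $|\Grass(k,n)| = \pi^{(k+1)(n-k)/2}\prod_{i=1}^{k+1}\frac{\Gamma(i/2)}{\Gamma((n-k+i)/2)}$. Each $\Gamma$ of a half-integer or integer argument is, up to a rational multiple, either an integer or an integer multiple of $\sqrt{\pi}$; and $\pi$ itself is the prototypical period (e.g.\ $\pi = \int_{x^2+y^2\le 1}\mathrm{d}x\,\mathrm{d}y$), as is $\sqrt{\pi}$ (the period ring is closed under the algebraic operations and contains $\sqrt{\pi}$). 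Alternatively, and more in the spirit of the preceding Lemmas, one can view $|\Grass(k,n)|$ directly as $\int_{\Grass(k,n)} \mathrm{vol}$, where $\Grass(k,n)$ sits inside $\Prj(\Lambda^{k+1}\Real^{n+1})$ as a real projective variety defined by the quadratic Plücker relations with integer coefficients, and the volume density is algebraic over $\mathbb{Q}$ in Plücker coordinates; the first Lemma then applies verbatim. Since the period ring is closed under multiplication, the product of the three periods is again a period.

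The potential subtlety — and the step I would be most careful about — is the periodicity of $|\Grass(k,n)|$: one must make sure that the Plücker embedding, the induced Riemannian metric, and hence the volume density are all genuinely \emph{defined over $\mathbb{Q}$} as an algebraic function on a $\mathbb{Q}$-semialgebraic set, so that the first Lemma applies without modification. Given the explicit formula \eqref{eq:volG} this is in fact immediate, so I would prefer to simply cite \eqref{eq:volG} and reduce to the statement that $\pi^a$ and $\Gamma$ of half-integers generate periods, which is classical. Putting the three observations together yields $\delta_{k,n} \in$ period ring. I would close by noting that the argument is uniform in $k$ and $n$, so no case analysis is needed, and that the same reasoning shows $\delta_{k,n}^{\Cpx}$ is a period (indeed a positive integer, hence trivially so).

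\begin{proof}
By \eqref{dkn fund} we have
\begin{equation*}
\delta_{k,n}=\frac{d_{k,n}!}{2^{d_{k,n}}} \cdot |\Grass(k,n)| \cdot |C(k,n)|.
\end{equation*}
The first factor is a rational number, hence a period. For the last factor, by Definition~\ref{def Segre} the Segre zonoid $C(k,n)$ is the Vitale zonoid $C_Z$ associated to $Z=S^k\times S^{n-k-1}\subset \Real^{d_{k,n}}$, which is a compact semialgebraic set defined by polynomial equations with rational coefficients; by the previous Lemma, $|C(k,n)|=\mathrm{vol}(C_Z)$ belongs to the period ring. Finally, by \eqref{eq:volG},
\begin{equation*}
|\Grass(k,n)|=\pi^{\frac{(k+1)(n-k)}{2}}\prod_{i=1}^{k+1}\frac{\Gamma\left(\frac{i}{2}\right)}{\Gamma\left(\frac{n-k+i}{2}\right)}.
\end{equation*}
Each value $\Gamma\left(\frac{m}{2}\right)$ with $m\in\N$ is a rational multiple of $1$ or of $\sqrt{\pi}$, and powers of $\pi$ and $\sqrt{\pi}$ lie in the period ring; since the period ring is a ring, $|\Grass(k,n)|$ is a period. (Alternatively, $|\Grass(k,n)|=\int_{\Grass(k,n)}\mathrm{vol}$, where $\Grass(k,n)$ is cut out in $\Prj(\Lambda^{k+1}\Real^{n+1})$ by the Plücker relations, which have integer coefficients, and the volume density of the Plücker metric is algebraic over $\mathbb{Q}$; the first Lemma then gives the claim.) Being a product of three periods, $\delta_{k,n}$ is a period.
\end{proof}
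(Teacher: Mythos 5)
Your proof follows essentially the same route as the paper's: the paper's own argument is exactly the factorization \eqref{dkn fund}, the preceding Lemma on Vitale zonoids for $|C(k,n)|$, and the Gamma--function formula \eqref{eq:volG} for $|\Grass(k,n)|$, combined with the ring structure of periods. So in substance you have reproduced it, with more detail spelled out.

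One caveat concerns your main justification for the factor $|\Grass(k,n)|$ (and, to be fair, the paper's one--line version is equally terse on this point). The claim that $\sqrt{\pi}$ lies in the period ring is not the standard fact: Kontsevich and Zagier only assert that $\Gamma(p/q)^{q}$ is a period, and it is not known whether $\Gamma(1/2)=\sqrt{\pi}$ itself is one; moreover in \eqref{eq:volG} the values $\Gamma\left(\frac{n-k+i}{2}\right)$ occur in the \emph{denominator}, and reciprocals of periods (e.g.\ $1/\pi$) are not known to be periods either, so ``the period ring is a ring'' does not by itself close the argument. The conclusion is nevertheless sound, for either of two reasons you essentially already have: (i) in the product $\pi^{\frac{(k+1)(n-k)}{2}}\prod_{i=1}^{k+1}\Gamma\left(\frac{i}{2}\right)/\Gamma\left(\frac{n-k+i}{2}\right)$ the half--integer powers of $\pi$ cancel (check the parities of $i$, $n-k+i$), so that $|\Grass(k,n)|$ is a positive rational multiple of a \emph{nonnegative integer} power of $\pi$, which is unambiguously a period; or (ii), cleaner and more in the spirit of the section, your parenthetical alternative: $|\Grass(k,n)|=\int_{\Grass(k,n)}\mathrm{vol}$, with $\Grass(k,n)\subset\Prj(\Lambda^{k+1}\Real^{n+1})$ cut out by the Pl\"ucker relations (rational coefficients) and the volume density algebraic over $\mathbb{Q}$, so the first Lemma applies verbatim. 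I would promote (ii), or spell out (i), and drop the bare assertion about $\sqrt{\pi}$; the rest of your argument (rationality of $d_{k,n}!/2^{d_{k,n}}$, the zonoid Lemma applied to $Z=S^k\times S^{n-k-1}$) matches the paper and is fine.
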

\begin{proof}
We use Equation~\eqref{dkn fund} and~\eqref{eq:volG}. Since periods form a ring and values of the Gamma function at rational points are periods this proves the statement. 
\end{proof}

\section{A Line Integral for $\delta_{1,n}$}\label{sec:lineInt}
In the case of $\delta_{1,n}$ we can prove the following formula.
\begin{prop}\label{prop:periods2}
\beq
\delta_{1,n}=-2 \pi^{2n-2}c(n)\int_{0}^1{ L(u)^{n-1}\frac{\dd}{\dd u}\left(\cosh\left(w(u)\right)\right)\,\dd u}
\eeq
where
\beq
c(n)=\frac{\Gamma\left(2n-2\right)}{\Gamma\left(n\right)\Gamma\left(n-2\right)}=\frac{n(n-2)}{2}\delta_{1,n}^\C
\eeq
 $L=F\cdot G$ and $w=\log(F/G)$ with
\begin{align}
F(u)&:=\int_{0}^{\pi/2} \frac{u \ \sin^2(\theta)}{\sqrt{\cos^2\theta+u^2 \sin^2\theta}}\mathrm{d}\theta	\\
G(u)&:=\int_{0}^{\pi/2} \frac{ \sin^2(\theta)}{\sqrt{\sin^2\theta+u^2 \cos^2\theta}}\mathrm{d}\theta
\end{align}
\begin{proof}
From~\cite[Equation (6.12)]{PSC} we have 
\beq
\delta_{1,n}= \pi^{2n-2}c(n)\int_0^{\pi/4}\left(r(\theta)^2\, \cos\theta \sin\theta\right)^{n-1} \frac{(\cos\theta)^2-(\sin\theta)^2}{(\cos\theta\, \sin\theta)^2}\dd\theta
\label{eq:d1n}
\eeq
Where $c(n)=\frac{\Gamma\left(2n-2\right)}{\Gamma\left(n\right)\Gamma\left(n-2\right)}$.

Moreover we have from~\eqref{rbyh}
\beq
 r(\theta)^2 = | \nabla h (\cos t, \sin t)|^2 
\eeq
where 
\begin{equation}
\tan \theta = \frac{h_y(\cos t, \sin t)}{h_x (\cos t, \sin t)}
\label{eq:tan}
\end{equation}
and $h$ is given by~\eqref{defh} and can be reduced to:
\begin{equation}\label{eq:h1red}
 h(x,y) =\frac{1}{\pi} \int_0^{\pi/2}\sqrt{x^2 \cos^2(\theta)+y^2\sin^2(\theta)}\, \dd \theta
\end{equation}

Let 
$ p(t) = h_y(\cos t, \sin t) $
and
$ q(t) = h_x(\cos t, \sin t)$.
Then
\begin{align}
 \cos^2 \theta &= \frac{q(t)^2}{q(t)^2 + p(t)^2} \\
 \sin^2 \theta &= \frac{p(t)^2}{q(t)^2 + p(t)^2} \\
r(\theta)^2 &= p(t)^2+q(t)^2
\end{align}
So, if we change the variable of integration in \eqref{eq:d1n} to $t$,
then the integrand becomes
\beq
\left((q^2 + p^2)\frac{pq}{p^2+q^2}\right)^{n-1}\frac{q^2-p^2}{p^2+q^2}\frac{(q^2+p^2)^2}{q^2p^2}
\frac{d}{dt} \left( \frac{p(t)}{q(t)} \right)  \frac{q(t)^2}{q(t)^2 + p(t)^2} dt
\eeq
where we have used \eqref{eq:tan} to determine
$d \theta = \frac{d}{dt} \left( \frac{p(t)}{q(t)} \right) \frac{q(t)^2}{q(t)^2 + p(t)^2} \dd t$.
So \eqref{eq:d1n} becomes
\begin{equation}\label{eq:edeg2}
 \delta_{1,n}= \pi^{2n-2} c(n) \int_0^{\pi/2} \left(p(t)q(t)\right)^{n-3}(q(t)^2 - p(t)^2 ) q(t)^2 \frac{d}{dt} \left( \frac{p(t)}{q(t)} \right) dt
\end{equation}
Next we make the change of variables $u= \tan t$. It is not difficult to see that $p(t)=F(u(t))$ and $q(t)=G(u(t))$ using~\eqref{eq:h1red} and the definition of $F$ and $G$ in the proposition. 
The integral becomes:
\begin{equation}\label{eq:edeg2}
\delta_{1,n}= \pi^{2n-2}c(n)\int_0^{1} \left(F(u)G(u)\right)^{n-3}(G(u)^2 - F(u)^2) G(u)^2 \frac{d}{du} \left( \frac{F(u)}{G(u)} \right) du,
\end{equation}
Now we let $L=F\cdot G$ and $H=F/G$. The integrand becomes $L^{n-1} (1/H-H) H'/H$. The last factor suggest to let $w:=\log(H)$. We obtain
\beq
\delta_{1,n}=-2 \pi^{2n-2}c(n)\int_0^{1} L(u)^{n-1} \sinh(w(u)) \frac\dd{\dd u} w(u) \dd u
\eeq
\end{proof}
\end{prop}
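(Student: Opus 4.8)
The plan is to transform the one-dimensional integral for $\delta_{1,n}$ already obtained in \cite{PSC} into one in which the radial function $r$ --- which we do not know explicitly --- is replaced by explicit elliptic integrals. The starting point is \cite[Equation~(6.12)]{PSC}:
\beq
\delta_{1,n}= \pi^{2n-2}c(n)\int_0^{\pi/4}\left(r(\theta)^2\, \cos\theta \sin\theta\right)^{n-1} \frac{\cos^2\theta-\sin^2\theta}{(\cos\theta\, \sin\theta)^2}\,\dd\theta,
\eeq
with $c(n)=\Gamma(2n-2)/(\Gamma(n)\Gamma(n-2))$. The crucial input is Equation~\eqref{rbyh}, which for $k=1$ reads $r(\theta)^2=\|\nabla h(\cos t,\sin t)\|^2$, where $t$ is defined by $\tan\theta = h_y(\cos t,\sin t)/h_x(\cos t,\sin t)$, and where the support function~\eqref{defh} reduces for $k=1$ to the single integral $h(x,y)=\tfrac1\pi\int_0^{\pi/2}\sqrt{x^2\cos^2\theta+y^2\sin^2\theta}\,\dd\theta$. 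Setting $p(t)=h_y(\cos t,\sin t)$ and $q(t)=h_x(\cos t,\sin t)$, one reads off $\cos^2\theta = q^2/(p^2+q^2)$, $\sin^2\theta = p^2/(p^2+q^2)$ and $r(\theta)^2 = p^2+q^2$.

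Next I would carry out the change of variables from $\theta$ to $t$. Differentiating $\tan\theta=p(t)/q(t)$ gives $\dd\theta=\frac{q^2}{p^2+q^2}\,\frac{\dd}{\dd t}\!\left(\frac{p}{q}\right)\dd t$; inserting this together with the formulas above and cancelling the matching powers of $p^2+q^2$ produces, over the corresponding interval in $t$,
\beq
\delta_{1,n}= \pi^{2n-2} c(n) \int \left(p(t)q(t)\right)^{n-3}\left(q(t)^2 - p(t)^2\right) q(t)^2\, \frac{\dd}{\dd t}\!\left(\frac{p(t)}{q(t)}\right)\dd t.
\eeq
Then the substitution $u=\tan t$ identifies $p(t)=F(u)$ and $q(t)=G(u)$ --- a direct comparison of the reduced formula for $h$ with the definitions of $F$ and $G$ --- and brings the domain to $[0,1]$. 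At this point, writing $L=FG$ and $H=F/G$, the integrand becomes $L^{n-1}\left(\tfrac1H-H\right)\tfrac{H'}{H}$; introducing $w=\log H$, so that $w'=H'/H$ and $\tfrac1H-H=-2\sinh w$, and using the identity $\tfrac{\dd}{\dd u}\cosh(w(u))=\sinh(w(u))\,w'(u)$, this equals $-2\,L^{n-1}\tfrac{\dd}{\dd u}\cosh(w(u))$, which is the asserted formula.

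The main work, apart from the algebraic simplification (lengthy but essentially mechanical), lies in justifying the two changes of variables on the full integration domain: one must check that $t\mapsto\theta(t)$, equivalently $\psi$, is a diffeomorphism of the relevant interval onto $[0,\pi/4]$ --- Proposition~\ref{propCpsi} gives this only near $\mu$ --- and that the apparent singularities of the integrand at the zeros of $p$ and $q$ produce no boundary contributions, so that the transformed integral genuinely coincides with the original one. Finally the cosmetic identity $c(n)=\tfrac{n(n-2)}{2}\,\delta_{1,n}^\C$ follows at once from the closed formula~\eqref{dknCexact} for the complex degree.
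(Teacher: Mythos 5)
Your proposal is correct and follows essentially the same route as the paper's own proof: starting from \cite[Equation (6.12)]{PSC}, using \eqref{rbyh} with $p=h_y$, $q=h_x$, performing the change of variables $\theta\to t$ and then $u=\tan t$ to identify $p=F$, $q=G$, and finally rewriting the integrand via $L=FG$, $w=\log(F/G)$ as $-2L^{n-1}\frac{\dd}{\dd u}\cosh(w(u))$. The only difference is that you explicitly flag the need to verify that $t\mapsto\theta(t)$ is a global diffeomorphism onto the relevant range and that no boundary contributions arise, points the paper leaves implicit.
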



\medskip
\printbibliography


\end{document}